\newtheorem{theorem}{Theorem}[section]
\newtheorem{lemma}[theorem]{Lemma}
\newtheorem{corollary}[theorem]{Corollary}
\newtheorem{definition}[theorem]{Definition}
\newtheorem{proposition}[theorem]{Proposition}
\newtheorem{remark}[theorem]{Remark}
\newcommand{\dbar}{\bar{\partial}}
\newcommand{\dpsum}[2]{\displaystyle{\sum_{#1}^{#2}}}
\newcommand{\pd}[2]{\frac{\partial #1}{\partial #2}}
\newcommand{\secpd}[3]{\frac{{\partial}^2 #1}{{\partial #2}{\partial #3}}}
\newcommand{\lie}[2]{\mathcal{L}_{#1}{#2}}
\newcommand{\norm}[1]{\|#1\|}
\def\bC{\mathbb{C}}
\begin{document}

\title[A differential-geometric approach to deformations of pairs]{A differential-geometric approach to deformations of pairs $(X,E)$}
\author[K. Chan]{Kwokwai Chan}
\address{Department of Mathematics\\ The Chinese University of Hong Kong\\ Shatin\\ Hong Kong}
\email{kwchan@math.cuhk.edu.hk}
\author[Y.-H. Suen]{Yat-Hin Suen}
\address{Department of Mathematics\\ The Chinese University of Hong Kong\\ Shatin\\ Hong Kong}
\email{yhsuen@math.cuhk.edu.hk}

\date{\today}

\begin{abstract}
This article gives an exposition of the deformation theory for pairs $(X, E)$, where $X$ is a compact complex manifold and $E$ is a holomorphic vector bundle over $X$, adapting an analytic viewpoint \`{a} la Kodaira-Spencer. By introducing and exploiting an auxiliary differential operator, we derive the Maurer--Cartan equation and differential graded Lie algebra (DGLA) governing the deformation problem, and express them in terms of differential-geometric notions such as the connection and curvature of $E$, obtaining a chain level refinement of the classical results that the tangent space and obstruction space of the moduli problem are respectively given by the first and second cohomology groups of the Atiyah extension of $E$ over $X$. As an application, we give examples where deformations of pairs are unobstructed.
\end{abstract}

\maketitle

\tableofcontents

\section{Introduction}

The theory of deformations of pairs $(X,E)$, where $X$ is a compact complex manifold and $E$ is a holomorphic vector bundle over $X$, has been studied using both algebraic \cite{Sernesi_book, Martinengo_thesis, Huybrechts-Thomas10, Li08} and analytic \cite{Siu-Trautmann81, Huang95} approaches and is well-understood among experts. In this mostly expository paper, we revisit this problem from a viewpoint \`{a} la Kodaira-Spencer \cite{Kodaira-Spencer58, Kodaira-Spencer60, Morrow-Kodaira_book}, emphasizing the use of differential-geometric notions such as connections and curvatures of $E$ and the induced differential operators.
What we obtain is a chain level refinement of the classical results.


To illustrate our strategy, recall that a family of deformations $\{X_t\}_{t\in\Delta}$ of a compact complex manifold $X$ over a small ball $\Delta$ can be represented by elements $\{\varphi_t\}_{t\in\Delta}\subset\Omega^{0,1}(T_X)$, where $T_X$ is the holomorphic tangent bundle of the complex manifold $X$. While the Dolbeault operator $\dbar_t:\Omega^0_{X_t} \to \Omega^{0,1}_{X_t}$ on $X_t$ is not easy to write down explicitly, one may consider the more convenient operator
$$\dbar + \varphi_t \lrcorner \partial:\Omega^0_X \to \Omega^{0,1}_X.$$
Although $\dbar + \varphi_t \lrcorner \partial$ is {\em not} the same as $\bar{\partial}_t$, their kernels coincide (see Proposition \ref{prop:auxiliary_operator_manifold}), and hence $\bar{\partial} + \varphi_t \lrcorner \partial$ completely determines the local holomorphic functions with respective to the complex structure $J_t$ on $X_t$. In fact, we have $\Omega^{0,1}_{X_t}=(id - \bar{\varphi}_t^*)\Omega^{0,1}_X$ and the commutative diagram
\begin{equation*}
\xymatrix{
\Omega^0_X \ar[rd]_{\dbar+\varphi_t\lrcorner\partial} \ar@{->}[r]^{\dbar_t}& {\Omega^{0,1}_{X_t}} \ar[d]^{\pi_t}
\\ & {\Omega_X^{0,1}}
}
\end{equation*}
where $\pi_t$ is the inverse of the canonical projection $P_t:\Omega^{0,1}_X\subset\Omega^1_X\rightarrow\Omega^{0,1}_{X_t}$ (see the proof of Proposition \ref{prop:isomorphic_cohomology}); in this way we can compute everything in terms of the holomorphic structure on $X$.

The same idea can be applied to deformations of pairs. First of all, given a family of deformations $\{(X_t,E_t)\}_{t\in\Delta}$ of $(X,E)$, we have a family of elements $\{\varphi_t\}_{t\in\Delta}\subset\Omega^{0,1}(T_X)$ since $\{X_t\}$ is in particular a family of deformations of $X$. Using a smooth trivialization, we may further assume that $E_0\cong E$ as smooth complex vector bundles. By choosing a hermitian metric on $E$ and considering the associated Chern connection, we define a differential operator
$$\bar{D}_t:\Omega^{0,q}_X(E)\to\Omega^{0,q+1}_X(E),$$
which satisfies the Leibniz rule and $\bar{D}_t^2 = 0$ (see Section \ref{sec:MC_eqn} for details).
While $\bar{D}_t$ is certainly {\em not} the Dolbeault operator $\dbar_{E_t}$ on the holomorphic bundle $E_t$, its kernel gives precisely the space of holomorphic sections of $E_t$ over $X_t$, and similar to the case when $E = \mathcal{O}_X$ (the trivial line bundle), we have a commutative diagram
\begin{equation*}
\xymatrix{
\Omega^0_X(E) \ar[rd]_{\bar{D}_t} \ar@{->}[r]^{\dbar_{E_t}}& {\Omega^{0,1}_{X_t}(E)} \ar[d]^{\pi_t}
\\ & {\Omega_X^{0,1}(E)}
}
\end{equation*}
relating the operators $\bar{D}_t$ and $\dbar_{E_t}$. Furthermore, $\bar{D}_t$ determines a family of elements
$$A_t := \bar{D}_t - \dbar_E - \varphi_t\lrcorner\nabla \in \Omega^{0,1}(\text{End}(E));$$
conversely, given any family of pairs of elements $A_t\in\Omega^{0,1}(\text{End}(E))$, $\varphi_t\in\Omega^{0,1}(T_X)$, we can set
$$\bar{D}_t := \dbar_E + \varphi_t\lrcorner\nabla + A_t.$$
The upshot is the following Newlander--Nirenberg-type theorem for deformations of pairs:
\begin{theorem}[=Theorem \ref{thm:Dbar_integrability}]
Given $\varphi_t\in\Omega^{0,1}(T_X)$ and $A_t\in\Omega^{0,1}(End(E))$, if the induced differential operator $\bar{D}_t$ defined above satisfies $\bar{D}_t^2=0$, then it defines a holomorphic pair $(X_t,E_t)$ (i.e. an integrable complex structure $J_t$ on $X$ together with a holomorphic bundle structure on $E$ over $(X,J_t)$).
\end{theorem}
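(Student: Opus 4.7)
The plan is to reduce the statement to two classical integrability theorems: the Newlander--Nirenberg theorem for almost complex structures, and the Koszul--Malgrange theorem for $(0,1)$-connections on smooth complex vector bundles. The former will produce $X_t$ out of $\varphi_t$; the latter will produce $E_t$ out of the operator that $\bar D_t$ descends to over $X_t$.

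For the first stage, I would extract from $\bar D_t^2 = 0$ the integrability of the almost complex structure $J_t$ determined by $\varphi_t$. Testing $\bar D_t^2$ on a section of the form $f\cdot s$ with $f\in C^\infty(X)$ and $s\in\Omega^0(E)$, and expanding via the Leibniz rule, the contribution that is purely second order in $f$ (i.e.\ does not see $\nabla s$, $A_t s$, or $\dbar_E s$) collects into $\bigl((\dbar+\varphi_t\lrcorner\partial)^2 f\bigr)\cdot s$. Since this must vanish for every $f$ and $s$, we recover the scalar equation $(\dbar+\varphi_t\lrcorner\partial)^2=0$ on $\Omega^0_X$, which in turn is equivalent to the Maurer--Cartan equation
\[
\dbar\varphi_t+\tfrac12[\varphi_t,\varphi_t]=0.
\]
The classical Newlander--Nirenberg theorem (or, equivalently, the scalar version already recorded in the excerpt) then gives that $J_t$ is integrable, so $X_t:=(X,J_t)$ is a compact complex manifold.

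For the second stage, with $X_t$ now a genuine complex manifold, the projection $P_t=\pi_t^{-1}:\Omega^{0,1}_X\to\Omega^{0,1}_{X_t}$ is a $C^\infty$-linear bundle isomorphism; tensoring with $E$, I would set
\[
\dbar_{E_t}:=\pi_t^{-1}\circ\bar D_t:\Omega^0_{X_t}(E)\longrightarrow\Omega^{0,1}_{X_t}(E).
\]
Using the scalar identity $\pi_t^{-1}\circ(\dbar+\varphi_t\lrcorner\partial)=\dbar_t$ on functions (this is exactly the diagram in the introduction applied to $f$ and then to $s$), a short computation with the product rule shows $\dbar_{E_t}(fs)=(\dbar_t f)s+f\,\dbar_{E_t} s$, so $\dbar_{E_t}$ is a $(0,1)$-connection on the smooth bundle $E\to X_t$. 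Extending $\pi_t$ to an isomorphism of the full $(0,\ast)$-complexes intertwining $\bar D_t$ with $\dbar_{E_t}$, the hypothesis $\bar D_t^2=0$ translates into $\dbar_{E_t}^2=0$, and Koszul--Malgrange then supplies a holomorphic structure $E_t$ on $E\to X_t$ whose Dolbeault operator is precisely $\dbar_{E_t}$. The pair $(X_t,E_t)$ is the desired holomorphic pair.

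The main obstacle I anticipate is the bookkeeping in the second stage: extending $\pi_t$ to all $(0,q)$-forms as a genuine isomorphism of complexes and verifying that it intertwines $\bar D_t$ with $\dbar_{E_t}$ at every level requires writing out $\Omega^{0,q}_{X_t}$ in terms of $\varphi_t$ (as suggested by the identity $\Omega^{0,1}_{X_t}=(\mathrm{id}-\bar\varphi_t^*)\Omega^{0,1}_X$ from the introduction) and tracking how wedge products interact with this identification. In the first stage the only subtlety is cleanly separating the ``tensorial in $E$'' symbol of $\bar D_t^2$ from the endomorphism-valued pieces involving $A_t$ and the curvature of $\nabla$, but this follows from the universal argument above because $f$ and $s$ are independent. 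Once these two verifications are in place, both integrability theorems apply as black boxes.
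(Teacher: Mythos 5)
Your first stage is exactly the paper's argument: testing $\bar{D}_t^2$ on $f\cdot s$ and isolating the scalar piece gives $(\dbar+\varphi_t\lrcorner\partial)^2=0$, which Lemma \ref{lem:operator_square_mfd} converts into the Maurer--Cartan equation for $\varphi_t$, and Newlander--Nirenberg then yields $X_t$. Your second stage, however, is genuinely different. The paper does \emph{not} transport $\bar{D}_t$ through $P_t$; it constructs a local frame $\{e_k\}\subset\ker\bar{D}_t$ directly, by writing $\bar{D}_t\sigma_i=\tau^k_i\otimes\sigma_k$ in an arbitrary smooth frame, reducing the existence of a flat frame to the PDE system $(\dbar+\varphi_t\lrcorner\partial)f^k_j+f^i_j\tau^k_i=0$, and solving that by applying Newlander--Nirenberg to an auxiliary almost complex structure on the total space $U\times\bC^r$ --- in effect re-proving Koszul--Malgrange in the deformed setting. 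You instead push $\bar{D}_t$ forward to an honest $(0,1)$-connection $\dbar_{E_t}=P_t\circ\bar{D}_t$ on $E\to X_t$ and quote Koszul--Malgrange as a black box (which the paper itself does elsewhere under the name ``linearized Newlander--Nirenberg''). This is sound: $P_t$, being the restriction of the type projection for $J_t$, is multiplicative on wedge products, and the scalar intertwining $P_t\circ(\dbar+\varphi_t\lrcorner\partial)=\dbar_t\circ P_t$ on $\Omega^{0,q}_X$ --- established in the proof of Proposition \ref{prop:isomorphic_cohomology} using only the Maurer--Cartan equation from your first stage, so there is no circularity --- propagates $P_t\bar{D}_t=\dbar_{E_t}P_t$ to all form degrees and gives $\dbar_{E_t}^2=0$. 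What you lose relative to the paper is the explicit local $\bar{D}_t$-flat frame, which the paper reuses later (in defining $\dbar_{E_t}$ and in the proof of Proposition \ref{prop:isomorphic_cohomology} for general $E$); what you gain is a shorter argument that cleanly separates the deformation-theoretic content from the two classical integrability theorems. Do make the multiplicativity of $P_t$ and the degree-$q$ scalar intertwining explicit if you write this up, since the whole reduction hinges on them.
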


Applying this, we derive the {\em Maurer--Cartan equation}:
\begin{theorem}[=Theorem \ref{thm:MC_eqn}]
Given a holomorphic pair $(X,E)$ and a smooth family of elements $\{(A_t,\varphi_t)\}_{t\in\Delta}\subset\Omega^{0,1}(A(E))$. Then $(A_t, \varphi_t)$ defines a holomorphic pair $(X_t, E_t)$
if and only if the Maurer--Cartan equation
\begin{align*}
\dbar_{A(E)}(A_t,\varphi_t) + \frac{1}{2}[(A_t,\varphi_t),(A_t,\varphi_t)] = 0
\end{align*}
is satisfied. Here, $A(E)$ is the {\em Atiyah extension} of $E$ which is equipped with the Dolbeault operator $\dbar_{A(E)}$, and the bracket $[-,-]$ is defined in terms of connections and curvatures on $E$ in Proposition \ref{prop:bracket}.
\end{theorem}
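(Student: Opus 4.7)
The plan is to invoke Theorem~\ref{thm:Dbar_integrability} to reduce the claimed equivalence to a single chain-level operator identity. Setting $\bar{D}_t := \dbar_E + \varphi_t\lrcorner\nabla + A_t$, that theorem says $(A_t,\varphi_t)$ defines a holomorphic pair $(X_t,E_t)$ if and only if $\bar{D}_t^{\,2}=0$ on $\Omega^{0,\bullet}(E)$. It therefore suffices to prove the operator identity
\[
\bar{D}_t^{\,2}\;=\;\dbar_{A(E)}(A_t,\varphi_t)\;+\;\tfrac{1}{2}\bigl[(A_t,\varphi_t),(A_t,\varphi_t)\bigr],
\]
where the right-hand side lives in $\Omega^{0,2}(A(E))$ and acts on $\Omega^{0,\bullet}(E)$ via the natural representation of $A(E)$ (the $\mathrm{End}(E)$-part by wedge-composition, the $T_X$-part by $\lrcorner\,\nabla$). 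Since this representation is faithful at the level of principal symbols, vanishing of $\bar{D}_t^{\,2}$ is equivalent to vanishing of the Maurer--Cartan expression.

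Next I would expand $\bar{D}_t^{\,2}$ into its nine cross-terms, use $\dbar_E^{\,2}=0$, and sort the remainder by degree in $(A_t,\varphi_t)$. The linear part consists of the anticommutators $\{\dbar_E, A_t\}$ and $\{\dbar_E, \varphi_t\lrcorner\nabla\}$. The Leibniz rule delivers the expected differentials $\dbar_E A_t$ and $\dbar\varphi_t \lrcorner\nabla$; crucially, commuting $\dbar_E$ past $\nabla$ in the second anticommutator generates an extra term involving the Chern curvature $F\in\Omega^{1,1}(\mathrm{End}(E))$, and this is exactly what should account for the off-diagonal $\mathrm{End}(E)$-component of $\dbar_{A(E)}$ dictated by the Atiyah extension.

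The quadratic part comprises $A_t\wedge A_t$, the square $(\varphi_t\lrcorner\nabla)^2$, and the mixed anticommutator $\{A_t,\varphi_t\lrcorner\nabla\}$. The first reduces to $\tfrac12[A_t,A_t]$; the second, by the classical Kodaira--Spencer calculation, produces $\tfrac12[\varphi_t,\varphi_t]\lrcorner\nabla$ together with a $(\varphi_t\wedge\varphi_t)\lrcorner F$ correction from commuting $\nabla$ with itself; and the mixed anticommutator, after one application of Leibniz, yields contributions of $\varphi_t\lrcorner(\nabla A_t)$-type. These pieces should assemble precisely into $\tfrac12[(A_t,\varphi_t),(A_t,\varphi_t)]$ for the $A(E)$-bracket of Proposition~\ref{prop:bracket}, which is designed to package the endomorphism commutator, the Lie bracket on $T_X$, the $T_X$-action on $\mathrm{End}(E)$ via $\nabla$, and the curvature $F$.

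The main obstacle is the bookkeeping of the curvature corrections: every commutation of $\dbar_E$ with $\nabla$, or of $\nabla$ with itself, produces a term involving $F$, and one must verify that these corrections collect \emph{exactly} into the $F$-components of $\dbar_{A(E)}$ and $[-,-]$, with nothing left over and nothing missing. This matching is ultimately forced by the very construction of $A(E)$, whose differential and bracket are defined so as to encode precisely the data generated by $\dbar_E$, $\nabla$, and wedge-composition acting on $\Omega^{0,\bullet}(A(E))$. Once Proposition~\ref{prop:bracket} is unpacked, the identity should follow essentially from definition-chasing, and the theorem is then immediate from the reduction above.
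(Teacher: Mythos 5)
Your proposal is correct and follows essentially the same route as the paper: reduce via Theorem \ref{thm:Dbar_integrability} to the vanishing of $\bar{D}_t^{\,2}=(\dbar_E+\varphi_t\lrcorner\nabla+A_t)^2$, expand the square, and match the resulting terms with $\dbar_{A(E)}$ and the bracket of Proposition \ref{prop:bracket}. The only bookkeeping point worth noting is that the quadratic curvature correction $F_\nabla(\varphi_t,\varphi_t)$ from $(\varphi_t\lrcorner\nabla)^2$ vanishes identically because $F_\nabla$ is of type $(1,1)$ while $\varphi_t\in\Omega^{0,1}(T_X)$, so the only curvature term surviving is the linear one $\varphi_t\lrcorner F_\nabla$, which is absorbed into $\dbar_{A(E)}$ (the bracket itself carries no curvature component).
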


Moreover, the triple $(\Omega^{0,\bullet}(A(E)),\dbar_{A(E)},[-,-])$ forms a \emph{differential graded Lie algebra} (DGLA), which (as expected) is naturally isomorphic to the one obtained by algebraic means \cite{Sernesi_book, Martinengo_thesis} (see Appendix \ref{sec:compare_classical}). At this point, we should mention that the relation between deformation theories and DGLAs was first recognized in \cite{NR_cohomology_deformations} by Nijenhuis and Richardson; later, it was suggested by Goldman and Millson \cite{Glodman_Millson_Kahler, Goldman_Millson_kuranishi_space} and many others that deformation problems should always be controlled by DGLAs and solutions to the associated Maurer--Cartan equations form moduli spaces of the deformation problems.

From the Maurer--Cartan equation, we deduce that the space of first order deformations of $(X,E)$ is given by the first cohomology group
$H^{0,1}_{\dbar_{A(E)}} \cong H^1(X, A(E))$
(see Section \ref{sec:1st_order_def}), and that the obstruction theory is captured by the {\em Kuranishi map}
\begin{align*}
Ob_{(X,E)}:U\subset H^1(X, A(E)) & \rightarrow H^2(X, A(E)),\\
\dpsum{i=1}{m}t^i(A_i,\varphi_i) & \mapsto H[(A_t,\varphi_t),(A_t,\varphi_t)],
\end{align*}
whence obstructions lie inside the second cohomology group
$H^{0,2}_{\dbar_{A(E)}} \cong H^2(X, A(E))$
(see Section \ref{sec:obs_kuranishi}). Here, $U$ is a small open neighborhood of the origin $0\in H^1(X,A(E))$. We also give a proof of the existence of a locally complete (or versal) family (see Theorem \ref{thm:completeness}; cf. \cite{Siu-Trautmann81}) using an analytic method originally due to Kuranishi \cite{Kuranishi65}.

Next we apply this analytic approach to look for situations where deformations of holomorphic pairs are unobstructed (Section \ref{sec:unobstr}). The main tool is the following proposition relating deformations of the pair $(X,E)$ to that of $X$ and $E$, which first appeared in \cite[Appendix A]{Huybrechts95} without proof:
\begin{proposition}[=Proposition \ref{prop:obstr_commute}]
Denote the Kuranishi obstruction maps of the deformation theory of $X$, $E$ and $(X,E)$ by $Ob_X$, $Ob_E$ and $Ob_{(X,E)}$ respectively. Then we have the following commutative diagram:
\begin{equation*}
\xymatrix{
\cdots\ar@{->}[r]& H^1(X,\text{End}(E)) \ar[d]_{Ob_E} \ar@{->}[r]^{\iota^*}& {H^1(X, A(E))} \ar[d]_{Ob_{(X,E)}} \ar@{->}[r]^{\pi^*}& H^1(X,T_X) \ar[d]_{Ob_X} \ar@{->}[r]^{\delta} & \cdots
\\\cdots\ar@{->}[r] & {H^2(X,\text{End}(E))} \ar@{->}[r]^{\iota^*} & {H^2(X, A(E))} \ar@{->}[r]^{\pi^*} & H^2(X,T_X) \ar@{->}[r]^{\delta} & \cdots
}
\end{equation*}
Here, the connecting homomorphism $\delta$ is given by contracting with the {\em Atiyah class}:
$$\delta(\varphi)=\varphi\lrcorner [F_{\nabla}].$$
\end{proposition}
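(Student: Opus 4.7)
The plan is to combine three ingredients: (i) a Dolbeault realization of the Atiyah short exact sequence, which supplies the two horizontal long exact sequences; (ii) a snake-lemma computation identifying the connecting map $\delta$ with contraction by the Atiyah class; and (iii) the fact that $\iota$ and $\pi$ induce DGLA morphisms on Dolbeault complexes, which yields the commutativity of the two inner squares.

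First I would use the smooth splitting $A(E) \cong T_X \oplus \text{End}(E)$ determined by the Chern connection $\nabla$ to write down the short exact sequence of Dolbeault complexes
\begin{equation*}
0 \to \Omega^{0,\bullet}(\text{End}(E)) \xrightarrow{\iota} \Omega^{0,\bullet}(A(E)) \xrightarrow{\pi} \Omega^{0,\bullet}(T_X) \to 0,
\end{equation*}
with $\iota(\alpha)=(\alpha,0)$ and $\pi(A,\varphi)=\varphi$. Working out $\dbar_{A(E)}$ in this smooth trivialization (as done earlier in the paper) gives
\begin{equation*}
\dbar_{A(E)}(A,\varphi) = \bigl(\dbar_{\text{End}(E)} A + \varphi \lrcorner F_\nabla,\ \dbar\varphi\bigr),
\end{equation*}
so $\iota$ and $\pi$ are chain maps and the usual long exact cohomology sequence recovers both horizontal rows. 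For the snake-lemma step, given a $\dbar$-closed $\varphi \in \Omega^{0,1}(T_X)$, lift to $(0,\varphi) \in \Omega^{0,1}(A(E))$ and compute $\dbar_{A(E)}(0,\varphi) = (\varphi \lrcorner F_\nabla, 0) = \iota(\varphi \lrcorner F_\nabla)$; hence $\delta[\varphi] = [\varphi \lrcorner F_\nabla] = \varphi \lrcorner [F_\nabla]$, and the identical calculation covers the $H^2(T_X) \to H^3(\text{End}(E))$ map in the lower row.

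For the commutativity of the squares, I would verify from Proposition \ref{prop:bracket} that $\iota$ and $\pi$ preserve the brackets: the formulas simplify to $[(A_1,0),(A_2,0)] = ([A_1,A_2],0)$ and $\pi[(A_1,\varphi_1),(A_2,\varphi_2)] = [\varphi_1,\varphi_2]$, so both maps are DGLA morphisms. Now the Kuranishi obstruction map, being essentially the quadratic assignment $[\alpha] \mapsto [\,[\alpha,\alpha]\,] \in H^2$ and thus a purely DGLA-theoretic construction (well-defined on cohomology by the graded Leibniz rule, independently of the harmonic representative chosen to realize it), is automatically natural under DGLA morphisms: any such $f$ satisfies $Ob \circ f^* = f^* \circ Ob$. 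Applying this to $f=\iota$ and $f=\pi$ gives the commutativity of the two inner squares. I expect the main obstacle to be the bookkeeping around the smooth-splitting formulas for $\dbar_{A(E)}$ and the bracket, together with the check that harmonic-projection ambiguities in $Ob$ disappear upon passage to cohomology; once those are in place, the remainder is a formal diagram chase.
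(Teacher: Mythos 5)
Your overall route is the same as the paper's: the rows come from the long exact sequence of the Atiyah extension $0\to \mathrm{End}(E)\to A(E)\to T_X\to 0$, the connecting map is computed to be contraction with $[F_\nabla]$, and the squares commute because $\iota$ and $\pi$ respect the structures defining the obstruction maps. Your treatment of the rows and of $\delta$ is in fact more detailed than the paper's (which simply records $\iota^*[A]=[(A,0)]$, $\pi^*[(A,\varphi)]=[\varphi]$ and invokes the definitions): your formula $\dbar_{A(E)}(A,\varphi)=(\dbar_{\mathrm{End}(E)}A+\varphi\lrcorner F_\nabla,\dbar\varphi)$ agrees with the paper's definition of $\dbar_{A(E)_B}$ on $(0,1)$-forms, the snake-lemma computation of $\delta$ is correct, and the verification that $\iota$ and $\pi$ are DGLA morphisms (the bracket restricting to $([A_1,A_2],0)$ and projecting to $[\varphi_1,\varphi_2]$) is exactly the structural fact underlying the commutativity.

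The one step that does not hold up as stated is your justification of the naturality of $Ob$. The Kuranishi map defined in Section \ref{sec:obs_kuranishi} is $\epsilon_1(t)\mapsto H[\epsilon(t),\epsilon(t)]$, where $\epsilon(t)$ solves the fixed-point equation $\epsilon=\epsilon_1-\tfrac12\dbar_{A(E)}^*G[\epsilon,\epsilon]$; this is not the quadratic assignment $[\alpha]\mapsto[[\alpha,\alpha]]$ except to leading order in $t$. In particular $[\epsilon(t),\epsilon(t)]$ is not $\dbar$-closed in general (one has $\dbar[\epsilon,\epsilon]=2[\delta(t),\epsilon]$ with $\delta(t)$ the Maurer--Cartan defect), so the parenthetical claim that the map is ``well-defined on cohomology independently of the harmonic representative'' does not apply to the full Kuranishi map; moreover $\iota$ and $\pi$ do not intertwine the metric-dependent operators $H$, $G$ and $\dbar^*$ on the three bundles, so ``automatic naturality under DGLA morphisms'' is not available off the shelf. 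To close this you must either restrict the statement to the primary (quadratic) obstruction, where your argument is complete, or compare the three Kuranishi solutions order by order and show the discrepancies are killed by the harmonic projection. To be fair, the paper's own proof is a one-line appeal to the definitions and elides exactly the same point, so you are filling in more than the authors do; but the specific reason you give for the commutativity of the squares is the part of your write-up that would not survive a careful reading.
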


\begin{remark}
The vertical maps (Kuranishi maps) in the above proposition are understood to be defined on small neighborhoods around the origins of the corresponding cohomology groups.
\end{remark}

Applying this proposition, we obtain results which generalize some of those in the recent work of X. Pan \cite{Pan13} (where only the case when $E$ is a line bundle was considered). We also prove that when $X$ is a K3 surface and $E$ is a good bundle over $X$ with $c_1(E) \neq 0$ (Proposition \ref{prop:unobstr_K3}), deformations of pairs $(X,E)$ are unobstructed.



\begin{remark}
After we posted an earlier version of this article on the arXiv, Carl Tipler informed us that the paper \cite{Huang95} of L. Huang already contained most of our results, although we have more detailed expositions of first order deformations (Section \ref{sec:1st_order_def}) and the proof of existence of Kuranishi families (Section \ref{sec:completeness}) than Huang's paper and we have a comparison with the algebraic approach (Appendix \ref{sec:compare_classical}) showing in particular that the isomorphism class of the DGLA is independent of the choice of hermitian metric on $E$.
As a result, this article should be regarded as largely expository.
\end{remark}

\section*{Acknowledgment}
The authors are grateful to Conan Leung, Si Li and Yi Zhang for various illuminating and useful discussions. Thanks are also due to Carl Tipler for pointing out the paper \cite{Huang95} and to Richard Thomas for interesting comments and suggestions on an earlier draft of this article. We would also like to express our gratitude to the referees for carefully reading our manuscript and giving various useful comments and suggestions which helped to greatly improve the exposition of this article.

The work of the first named author described in this paper was substantially supported by grants from the Research Grants Council of the Hong Kong Special Administrative Region, China (Project No. CUHK404412 $\&$ CUHK400213).

\section{Connections, curvature and the Atiyah class}\label{sec:prelim}

In this section, we review some basic notions in the theory of holomorphic vector bundles over complex manifolds and fix our notations. Excellent references for these materials include the textbooks \cite{Griffiths-Harris_book, Huybrechts_book}.

Let $E$ be a complex vector bundle over a smooth manifold $X$. For $k \geq 0$, we denote by $\Omega^k$ the sheaf of $k$-forms and by $\Omega^k(E)$ the sheaf of $E$-valued $k$-forms over $X$.
Recall that a {\em connection} on $E$ is a $\bC$-linear sheaf homomorphism $\nabla: \Omega^0(E) \to \Omega^1(E)$ satisfying the Leibniz rule:
$$\nabla(f\cdot s) = df \otimes s + f \cdot \nabla s$$
for $f \in \Omega^0$ and $s\in \Omega^0(E)$.
We extend $\nabla$ naturally to $\nabla: \Omega^k(E) \to \Omega^{k+1}(E)$ by defining
$$\nabla(\alpha \otimes s) = d\alpha \otimes s + (-1)^k \alpha \wedge \nabla s$$
for $\alpha \in \Omega^k$ and any $s \in \Omega^0(E)$.
The {\em curvature}
$$F_\nabla = \nabla \circ \nabla: \Omega^0(E) \to \Omega^2(E)$$
of $\nabla$ can then be regarded as a global $\text{End}(E)$-valued $2$-form.
Also, $\nabla$ induces a natural connection on $\text{End}(E)$ by
$$(\nabla A)(s)=\nabla(As)-A(\nabla s),$$
where $A \in \Omega^0(\text{End}(E))$ and $s \in \Omega^0(E)$,
and we have the {\em Bianchi identity}
$$\nabla F_{\nabla}=0.$$

Now suppose that $X$ is a complex manifold. For $p,q \geq 0$, we denote by $\Omega^{p,q}$ the sheaf of $(p,q)$-forms and by $\Omega^{p,q}(E)$ the sheaf of $E$-valued $(p,q)$-forms over $X$. Recall that a holomorphic structure on a complex vector bundle $E$ over $X$ is uniquely determined by a $\bC$-linear operator $\dbar_E: \Omega^0(E) \to \Omega^{0,1}(E)$ satisfying the Leibniz rule and the integrability condition $\dbar_E^2 = 0$.
If we further equip $E$ with a hermitian metric $h$, then there exists a unique connection $\nabla$ on $E$ which is hermitian (i.e. $d h(s_1,s_2) = h(\nabla s_1, s_2) + h(s_1, \nabla s_2)$ for any $s_1, s_2 \in \Omega^0(E)$) and compatible with the holomorphic structure on $E$ (i.e. $\nabla^{0,1} = \dbar_E$, where $\nabla^{0,1} = \Pi^{0,1} \circ \nabla$ and $\Pi^{p,q}: \Omega^{p+q}(E) \to \Omega^{p,q}(E)$ is the natural projection map). $\nabla$ is usually called the {\em Chern connection} on $(E, h)$. The curvature $F_\nabla$ of the Chern connection on $(E,h)$ is real and of type $(1,1)$, so the Bianichi identity implies that $\dbar_{\text{End}(E)}F_{\nabla}=0$, and thus this defines a class
$$[F_{\nabla}]\in H^{1,1}(X, \text{End}(E)),$$
called the {\em Atiyah class} of $E$ \cite{Atiyah57}. We have the following lemma.

\begin{lemma}[\cite{Huybrechts_book}, Proposition 4.3.10]
The Atiyah class is independent of the choice of the Hermitian metric.
\end{lemma}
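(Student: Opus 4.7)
The plan is to take two hermitian metrics $h_0, h_1$ on $E$ with respective Chern connections $\nabla_0, \nabla_1$ and show directly that the curvature forms $F_{\nabla_0}$ and $F_{\nabla_1}$ differ by a $\dbar_{\text{End}(E)}$-exact $(1,1)$-form. The crucial observation is that the $(0,1)$-part of a Chern connection is always equal to $\dbar_E$, and therefore depends only on the holomorphic structure on $E$, not on the chosen metric. Consequently the difference $a := \nabla_1 - \nabla_0 \in \Omega^1(\text{End}(E))$ is automatically of pure type $(1,0)$.

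I would then expand
\[
F_{\nabla_1} \;=\; (\nabla_0 + a)^2 \;=\; F_{\nabla_0} + \nabla_0 a + a \wedge a
\]
and decompose by Hodge type. Since $a$ is of type $(1,0)$, the term $a \wedge a$ lies in $\Omega^{2,0}(\text{End}(E))$, while $\nabla_0 a$ splits into a $(2,0)$-piece and the $(1,1)$-piece $\dbar_{\text{End}(E)} a$. Because both $F_{\nabla_0}$ and $F_{\nabla_1}$ are of pure type $(1,1)$ (being the Chern curvatures of hermitian metrics), the $(2,0)$-components on the right-hand side must cancel, and comparing $(1,1)$-parts yields
\[
F_{\nabla_1} - F_{\nabla_0} \;=\; \dbar_{\text{End}(E)} a,
\]
so $[F_{\nabla_0}] = [F_{\nabla_1}]$ in $H^{1,1}(X, \text{End}(E))$.

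There is no real obstacle; the entire argument rests on the metric-independence of the $(0,1)$-part of a Chern connection, which reduces the problem to a purely algebraic Hodge-type bookkeeping that automatically produces the desired $\dbar$-primitive for $F_{\nabla_1} - F_{\nabla_0}$. One could alternatively deduce the result by interpolating via the convex combination $h_t = (1-t)h_0 + t h_1$ and showing $\tfrac{d}{dt}[F_{\nabla_t}] = 0$, but the direct computation above seems cleaner.
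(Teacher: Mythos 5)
Your argument is correct and complete: the key point that the $(0,1)$-part of any Chern connection equals $\dbar_E$ forces $a=\nabla_1-\nabla_0$ to be of type $(1,0)$, and the type decomposition of $F_{\nabla_1}=F_{\nabla_0}+\nabla_0 a+a\wedge a$ then yields $F_{\nabla_1}-F_{\nabla_0}=\dbar_{\mathrm{End}(E)}a$ with $a\in\Omega^{1,0}(\mathrm{End}(E))$, which is exactly a $\dbar$-exact representative in the Dolbeault complex computing $H^1(X,\Omega^1_X\otimes\mathrm{End}(E))$. The paper itself gives no proof, simply citing Huybrechts; there the Atiyah class is first defined metric-independently (via the jet-sequence/\v{C}ech cocycle of the transition functions) and only afterwards identified with $[F_\nabla]$, so metric-independence is structural rather than computed. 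Your direct comparison of two Chern curvatures is the more elementary and self-contained route, and it also delivers the explicit primitive $a$ that the paper implicitly invokes later when it asserts that ``the curvature $F_\nabla$ differs by an exact $\mathrm{End}(E)$-valued $1$-form if another metric was used.''
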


Using the Atiyah class, one can define an extension of $\text{End}(E)$ by $T_X$; indeed, in the language of algebraic geometry, we can interpret the Atiyah class as an element in the extension group $\text{Ext}^1(E\otimes T_X, E) = \text{Ext}^1(T_X, \text{End}(E))$. Consider the smooth vector bundle $A(E):=\text{End}(E)\oplus T_X$ and the differential operator $\bar{\partial}_{A(E)_B}:\Omega^{0}(A(E))\rightarrow\Omega^{0,1}(A(E))$ on $A(E)$ defined by
$$\bar{\partial}_{A(E)_B}:=
\begin{pmatrix}
\bar{\partial}_{\text{End}(E)} & B
\\O & \bar{\partial}_{T_X}
\end{pmatrix},$$
where $B\in\Omega^{0,1}(\text{Hom}(T_X,\text{End}(E)))$ acts on $\Omega^0(T_X)$ by
$$B\wedge\varphi := -(-1)^{|\varphi|}\varphi\lrcorner F_{\nabla}.$$
To simplify notations, from this point on, we will denote the vector bundles $\text{End}(E)$ and $\text{Hom}(T_X,\text{End}(E))$ by $Q$ and $H$ respectively unless specified otherwise.

\begin{proposition}\label{prop:B_closed}
$B\in\Omega^{0,1}(H)$ is $\bar{\partial}_H$-closed.
\end{proposition}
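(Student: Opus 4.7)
My plan is to recognize $B$ as essentially the Chern curvature $F_\nabla$ itself, repackaged via the canonical sheaf isomorphism
$$\Omega^{0,1}(H) \;=\; \Omega^{0,1}(T_X^* \otimes \text{End}(E)) \;\cong\; \Omega^{1,1}(\text{End}(E)),$$
and then to reduce the claim $\dbar_H B = 0$ to the Bianchi identity for the Chern connection.

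First I would make the identification explicit in local holomorphic coordinates $z^1,\ldots,z^n$. Writing $F_\nabla = \sum_{i,j} F_{i\bar{j}}\, dz^i \wedge d\bar{z}^j$ with $F_{i\bar{j}} \in \Omega^0(\text{End}(E))$, the prescription $B \wedge \varphi = -\varphi \lrcorner F_\nabla$ for $\varphi \in \Omega^0(T_X)$ forces $B(\partial/\partial z^i) = -\sum_j F_{i\bar{j}}\, d\bar{z}^j$, so under the above isomorphism $B$ corresponds to $-F_\nabla$, up to a sign that is irrelevant for $\dbar$-closedness.

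Next I would observe that both $T_X^*$ and $\text{End}(E)$ carry canonical holomorphic structures (intrinsic to $X$ in the former case, induced from $E$ in the latter), so $\dbar_H$ is simply the tensor product operator on $T_X^* \otimes \text{End}(E)$. Because the local sections $dz^i$ of $T_X^*$ are holomorphic, applying $\dbar_H$ only differentiates the coefficients $F_{i\bar{j}}$, and under the identification we get that $\dbar_H B$ corresponds to $-\dbar_{\text{End}(E)} F_\nabla \in \Omega^{1,2}(\text{End}(E))$.

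Finally I would invoke the Bianchi identity $\nabla F_\nabla = 0$. Since $\nabla$ is the Chern connection on $(E,h)$, its curvature is of type $(1,1)$, so the decomposition $\nabla = \partial_{\text{End}(E)} + \dbar_{\text{End}(E)}$ splits $\nabla F_\nabla$ into a $(2,1)$ piece and a $(1,2)$ piece, each of which must vanish separately. In particular $\dbar_{\text{End}(E)} F_\nabla = 0$, whence $\dbar_H B = 0$ as required. The only real obstacle I anticipate is keeping sign and index conventions straight when passing between $\lrcorner$, $\wedge$, and the isomorphism displayed above, but nothing conceptually substantive arises.
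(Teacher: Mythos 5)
Your proof is correct and is essentially the paper's argument: both reduce the claim to the $(1,2)$-part of the Bianchi identity, $\dbar_{\text{End}(E)}F_{\nabla}=0$, which holds because the Chern curvature is of type $(1,1)$. The only difference is cosmetic --- the paper carries out the reduction by an invariant Leibniz computation, evaluating $(\dbar_H B)(v)$ for $v\in T_X$ and using the compatibility of $\dbar$ with contraction, rather than via your coordinate identification of $\Omega^{0,1}(\text{Hom}(T_X,\text{End}(E)))$ with $\Omega^{1,1}(\text{End}(E))$.
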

\begin{proof}
This follows from the Bianchi identity $\bar{\partial}_QF_{\nabla}=0$: For any $v \in T_X$,
\begin{align*}
(\dbar_HB)(v)
= \dbar_Q(Bv)+B(\dbar_{T_X}v)
= -\dbar_Q(v\lrcorner F_{\nabla}) + \dbar_{T_X}v\lrcorner F_{\nabla}
= v\lrcorner\dbar_QF_{\nabla}=0.
\end{align*}
\end{proof}

\begin{proposition}
$\left( A(E),\bar{\partial}_{ A(E)_B}\right)$ defines a holomorphic vector bundle over $X$ whose holomorphic structure depends only on the class $[B]$.
\end{proposition}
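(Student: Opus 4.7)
The plan is to verify two things: first, that the operator $\bar{\partial}_{A(E)_B}$ endows $A(E)$ with a holomorphic bundle structure, which (by the vector-bundle analogue of Newlander--Nirenberg) reduces to checking the Leibniz rule and integrability $\bar{\partial}_{A(E)_B}^2=0$; second, that replacing $B$ with a $\bar{\partial}_H$-cohomologous representative yields an isomorphic holomorphic bundle.

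For the Leibniz rule, I would simply note that the diagonal entries $\bar{\partial}_Q$ and $\bar{\partial}_{T_X}$ already satisfy it, and that the off-diagonal entry $B$ is $\Omega^0$-linear (it is induced by a bundle map tensored with a $(0,1)$-form), so it contributes nothing to the derivation rule. For integrability, I would extend $\bar{\partial}_{A(E)_B}$ to $\Omega^{0,q}(A(E)) \to \Omega^{0,q+1}(A(E))$ in the obvious blockwise manner and compute the square of the matrix operator. The diagonal entries vanish because $\bar{\partial}_Q^2=0$ and $\bar{\partial}_{T_X}^2=0$. The only nontrivial entry is the upper-right block, which equals $\bar{\partial}_Q\circ B + B\circ\bar{\partial}_{T_X}$; applied to a section $v\in\Omega^0(T_X)$, the Leibniz rule for the induced connection on $H=\mathrm{Hom}(T_X,Q)$ gives exactly $(\bar{\partial}_H B)(v)$, which vanishes by Proposition \ref{prop:B_closed}.

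For the second assertion, suppose $B' = B + \bar{\partial}_H\xi$ for some $\xi\in\Omega^0(H)$. I would define a smooth bundle automorphism of $A(E)=Q\oplus T_X$ by
\[
\phi := \begin{pmatrix} \mathrm{id}_Q & \xi \\ 0 & \mathrm{id}_{T_X} \end{pmatrix},
\]
and check that $\phi$ intertwines the two Dolbeault operators, i.e.\ $\bar{\partial}_{A(E)_{B'}}\circ\phi = \phi\circ\bar{\partial}_{A(E)_{B}}$, after possibly adjusting a sign in the definition of $\xi$. Evaluated on $(A,v)\in\Omega^0(A(E))$, both sides have second component $\bar{\partial}_{T_X}v$, and comparing first components reduces to the identity $B'v - Bv = \bar{\partial}_Q(\xi v) - \xi(\bar{\partial}_{T_X}v) = (\bar{\partial}_H \xi)(v)$, which is precisely our hypothesis. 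Since $\phi$ is a smooth isomorphism intertwining the holomorphic structures, the two holomorphic bundles are isomorphic, which shows the structure depends only on $[B]\in H^{0,1}(X,H)$.

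The routine calculation is entirely in the second step's integrability check. The main (small) obstacle will be sign bookkeeping, namely ensuring that the convention $B\wedge\varphi = -(-1)^{|\varphi|}\varphi\lrcorner F_\nabla$ is consistent with the Leibniz rule used to identify the cross-term with $\bar{\partial}_H B$, and that the same convention gives the required intertwining identity for $\phi$. No deep ingredient is needed beyond the Bianchi identity already invoked in Proposition \ref{prop:B_closed} and the Leibniz rule for the induced $\bar{\partial}$-operator on $\mathrm{Hom}(T_X,Q)$.
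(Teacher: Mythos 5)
Your proposal is correct and follows essentially the same route as the paper: integrability of $\bar{\partial}_{A(E)_B}$ is reduced to $\bar{\partial}_H B=0$ (Proposition \ref{prop:B_closed}), and cohomologous choices of $B$ are related by exactly the unipotent gauge transformation $(A,v)\mapsto(A-fv,v)$ that the paper uses (your $\xi$ is $-f$, the sign you already flagged). The only extra content in the paper's proof is the closing remark that changing the hermitian metric changes $F_\nabla$ by a $\bar{\partial}_H$-exact term, so the holomorphic structure is also metric-independent; this is not needed for the statement as literally phrased.
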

\begin{proof}
Clearly $\bar{\partial}_{ A(E)_B}$ satisfies the Leibniz rule, so it suffices to prove that $\bar{\partial}_{ A(E)_B}^2 = 0$. But
$\bar{\partial}_{ A(E)_B}^2=0$ if and only if $\bar{\partial}_HB=0$ which holds by Proposition \ref{prop:B_closed}. This proves the first part of the proposition.

To see the second part, suppose that $B'-B=\bar{\partial}_H f$ for some $f\in \text{Hom}(T_X,Q)$. Define the smooth bundle isomorphism $F: A(E)\rightarrow A(E)$ by
$$F:(A,v)\longmapsto(A-fv,v),$$
and extend to $ A(E)$-valued $p$-forms. We compute
\begin{align*}
\bar{\partial}_{ A(E)_{B'}}F(A,v)
& = (\bar{\partial}_Q(A-fv) + B'v, \bar{\partial}_{T_X}v) = (\bar{\partial}_QA - \bar{\partial}_Qfv + Bv + \bar{\partial}_Hfv, \bar{\partial}_{T_X}v)\\
& = (\bar{\partial}_QA + Bv - f\bar{\partial}_{T_X}v, \bar{\partial}_{T_X}v) = F\bar{\partial}_{ A(E)_B}(A,v).
\end{align*}
Hence $F$ in fact defines a holomorphic bundle isomorphism between $( A(E),\bar{\partial}_{ A(E)_B})$ and $( A(E),\bar{\partial}_{ A(E)_{B'}})$. Since the curvature $F_{\nabla}$ differs by an exact $\text{End}(E)$-valued 1-form if another metric was used, this shows that the holomorphic structure of $ A(E)_B$ only depends on the class $[B]$ but not the metric.
\end{proof}

\begin{remark}
Under the Dolbeault isomorphism
$$H^1(X, \text{Hom}(T_X,Q))\cong H^{1,1}(X, Q),$$
the class $[B]$ corresponds to the Atiyah class $[F_{\nabla}]$. Hence the holomorphic structure of $ A(E)$ depends only on the Atiyah class of $E$.
\end{remark}

By abuse of notations, we will now write $\bar{\partial}_{A(E)_B}$ simply as $\bar{\partial}_{ A(E)}$, keeping in mind that a hermitian metric on $E$ has been chosen.
\begin{definition}
The holomorphic vector bundle $\left(A(E),\dbar_{A(E)}\right)$, which is an extension of $Q = \text{End}(E)$ by $T_X$, is called the {\em Atiyah extension} of $E$.
\end{definition}

\section{Maurer--Cartan equations}\label{sec:MC_eqn}

In this section, we start our study of the deformation theory of pairs $(X,E)$. Our goal is to derive the DGLA and Maurer--Cartan equation which govern this deformation problem.

\subsection{Deformations of complex structures and holomorphic vector bundles}

We begin by a brief review of the classical theory of deformations of complex structures and holomorphic vector bundles; the textbooks \cite{Morrow-Kodaira_book} and \cite{Kobayashi_book} are classic references for these theories respectively.

We first recall that a family of deformations $\pi:\mathcal{X} \to \Delta$ of a compact complex manifold $X$ can be represented by a family of sections $\varphi_t \in \Omega^{0,1}(T_X)$, where $T_X$ is the holomorphic tangent bundle of $X$ (or the $i$-eigenbundle of the almost complex structure defining $X$), satisfying the Maurer--Cartan equation
\begin{equation}\label{eqn:MC_manifold}
\dbar_{T_X}\varphi_t+\frac{1}{2}[\varphi_t,\varphi_t]=0.
\end{equation}
An essential ingredient in the proof is the Newlander--Nirenberg Theorem \cite{Newlander-Nirenberg57} which states that any integrable almost complex structure comes from a complex structure.

\begin{proposition}\label{prop:auxiliary_operator_manifold}
Define an operator $\bar{\partial} + \varphi_t \lrcorner \partial: \Omega^0 \to \Omega^{0,1}$ by $f \mapsto \bar{\partial}f + \varphi_t \lrcorner (\partial f)$, where $\lrcorner$ denotes the contraction or interior product.
Then a local smooth function $f$ is holomorphic on $X_t$ if and only if $\left(\bar{\partial} + \varphi_t \lrcorner \partial\right)f = 0$, i.e.
$$\bar{\partial}_t f = 0 \Longleftrightarrow \left(\bar{\partial} + \varphi_t \lrcorner \partial \right)f = 0,$$
where $\bar{\partial}_t$ is the $\bar{\partial}$-operator of the complex manifold $X_t$.
\end{proposition}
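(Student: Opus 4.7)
The plan is to translate the holomorphicity condition $\bar{\partial}_t f = 0$ into a condition on the original structure by using the explicit description of $J_t$ in terms of $\varphi_t$.

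First, I would recall the standard identification: viewing $\varphi_t \in \Omega^{0,1}(T_X)$ as an element of $\text{Hom}(T^{0,1}_X, T^{1,0}_X)$, the antiholomorphic tangent bundle of $X_t$ is the graph of $\varphi_t$, namely
$$T^{0,1}_{X_t} = \{ \bar{v} + \varphi_t(\bar{v}) : \bar{v} \in T^{0,1}_X \}.$$
The condition $\bar{\partial}_t f = 0$ is equivalent to saying that $df$ vanishes on $T^{0,1}_{X_t}$, i.e.\ $df(\bar{v} + \varphi_t(\bar{v})) = 0$ for every $\bar{v} \in T^{0,1}_X$.

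Next, I would decompose $df = \partial f + \bar{\partial} f$ with respect to the original complex structure $J_0$ and use the type decomposition: since $\bar{v}$ is of type $(0,1)$ while $\varphi_t(\bar{v})$ is of type $(1,0)$, we have $df(\bar{v}) = \bar{\partial} f(\bar{v})$ and $df(\varphi_t(\bar{v})) = \partial f(\varphi_t(\bar{v}))$. The remaining step is to identify the latter with the contraction $\varphi_t \lrcorner \partial f$ evaluated at $\bar{v}$. A short local coordinate check, writing $\varphi_t = \varphi^i_{\bar j}\, d\bar{z}^j \otimes \partial_{z^i}$, shows that both $\partial f(\varphi_t(\partial_{\bar{z}^k}))$ and $(\varphi_t \lrcorner \partial f)(\partial_{\bar{z}^k})$ equal $\varphi^i_{\bar k}\,\partial f/\partial z^i$, so the two expressions agree globally.

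Putting these together, the condition $df(\bar{v} + \varphi_t(\bar{v})) = 0$ becomes $(\bar{\partial} f + \varphi_t \lrcorner \partial f)(\bar{v}) = 0$, and since $\bar{v} \in T^{0,1}_X$ was arbitrary this is equivalent to $\bar{\partial} f + \varphi_t \lrcorner \partial f = 0$, giving both directions of the equivalence.

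There is no genuine obstacle here; the content of the proposition is really just a careful unwinding of definitions, so the only thing to be careful about is fixing the pairing convention used to identify $\Omega^{0,1}(T_X)$ with $\text{Hom}(T^{0,1}_X, T^{1,0}_X)$ and checking compatibility with the contraction $\lrcorner$. Once the sign and index conventions are pinned down, the computation above falls out immediately.
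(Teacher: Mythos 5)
Your proof is correct and follows essentially the same route as the paper: the paper's argument also writes $\varphi_t = \varphi^j_i\, d\bar{z}^i \otimes \partial/\partial z^j$ in local coordinates, observes that $T^{0,1}_{X_t}$ is spanned by $\partial/\partial\bar{z}^i + \varphi^j_i\,\partial/\partial z^j$ (i.e.\ the graph of $\varphi_t$), and concludes by evaluating $df$ on these vectors. You have simply spelled out the final "the result follows" step more explicitly, including the check that $\partial f(\varphi_t(\bar v)) = (\varphi_t\lrcorner\partial f)(\bar v)$, which is a welcome clarification but not a different method.
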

\begin{proof}
Let $z^1, \ldots, z^n$ be local holomorphic coordinates on $X$ (where $n$ is the complex dimension of $X$). Then $\varphi_t$ is of the form
\footnote{The Einstein summation convention will be used throughout this article.}
$$\varphi_t =\varphi^j_i(z,t) d\bar{z}^i \otimes \frac{\partial}{\partial z^j}.$$
Hence $T_{X_t}^{0,1}$ is locally spanned by
$$\frac{\partial}{\partial\bar{z}^i} + \varphi^j_i(z,t) \frac{\partial}{\partial z^j}.$$
The result follows.
\end{proof}

Next we recall the deformation theory of holomorphic vector bundles. Let $E\rightarrow X$ be a complex vector bundle over a complex manifold $X$. It is a standard fact in complex geometry that $E$ admits a holomorphic structure if and only if there exists a linear operator $\dbar_E:\Omega^{0,q}(E)\rightarrow\Omega^{0,q+1}(E)$ satisfying $\dbar_E^2=0$ and the Leibniz rule
$$\dbar_E(\alpha\otimes s)=\dbar\alpha\otimes s+(-1)^{|\alpha|}\alpha\wedge\dbar_E(s),$$
for any $\alpha\in\Omega^{0,q}(E)$ and smooth section $s$ of $E$ (we call this the linearized version of the Newlander--Nirenberg Theorem; see e.g. \cite[Theorem 2.6.26]{Huybrechts_book} or \cite[Theorem 3.2]{Moroianu_book}).
Hence if we have a family of holomorphic vector bundles $\mathcal{E}\rightarrow\Delta$ (or $\{E_t\}_{t\in \Delta}$) on $X$, then we have a family of Dolbeault operators $\dbar_{E_t}$, whose squares are zero and all satisfy the Leibniz rule.
\begin{proposition}
Given a family of deformations $\{E_t\}_{t \in \Delta}$ of $E$, the element
$A_t:=\dbar_{E_t}-\dbar_E\in\Omega^{0,1}(\text{End}(E))$ satisfies the Maurer--Cartan equation
$$\dbar_{\text{End}(E)}A_t+\frac{1}{2}[A_t,A_t]=0$$
for all $t\in \Delta$.
Conversely, if we are given a family $\{A_t\}_{t\in\Delta}\subset \Omega^{0,1}(\text{End}(E))$ which satisfies the Maurer--Cartan equation for each $t$, then $\left\{(E,\dbar_E+A_t)\right\}_{t\in\Delta}$ defines a family of deformations of $E$.
\end{proposition}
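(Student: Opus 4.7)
The strategy is to unpack $(\dbar_{E_t})^2 = (\dbar_E + A_t)^2$ as an operator on $\Omega^0(E)$ and read off the Maurer--Cartan equation directly; the two directions are essentially the same calculation run forwards and backwards.

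First I would check that $A_t := \dbar_{E_t} - \dbar_E$ indeed lies in $\Omega^{0,1}(\text{End}(E))$. Both $\dbar_{E_t}$ and $\dbar_E$ send $\Omega^0(E)$ into $\Omega^{0,1}(E)$ and obey the same Leibniz rule with respect to $\dbar:\Omega^0\to\Omega^{0,1}$, so their difference is $\Omega^0$-linear and is therefore a bona fide section of $\text{Hom}(E, E\otimes\Omega^{0,1})\cong \Omega^{0,1}(\text{End}(E))$.

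For the forward direction I would compute, on an arbitrary $s\in\Omega^0(E)$,
\begin{align*}
0 = \dbar_{E_t}^2 s = (\dbar_E + A_t)(\dbar_E s + A_t s) = \dbar_E^2 s + \bigl(\dbar_E(A_t s) + A_t(\dbar_E s)\bigr) + A_t\wedge A_t \cdot s.
\end{align*}
The first term vanishes; the middle bracket collapses to $(\dbar_{\text{End}(E)}A_t)\cdot s$ upon applying the Leibniz rule for $\dbar_E$ to the $\text{End}(E)$-valued $(0,1)$-form $A_t$ paired with $s$; and the last equals $\tfrac{1}{2}[A_t,A_t]\cdot s$, by the standard identity $A\wedge A=\tfrac{1}{2}[A,A]$ for Lie algebra--valued $(0,1)$-forms (where $[-,-]$ combines wedge on the form part with the commutator on endomorphisms). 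As $s$ is arbitrary, the Maurer--Cartan equation follows.

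For the converse, $\dbar_E+A_t:\Omega^0(E)\to\Omega^{0,1}(E)$ is $\mathbb{C}$-linear and satisfies the Leibniz rule (inherited from $\dbar_E$, since $A_t$ is $\Omega^0$-linear); I would extend it to $\Omega^{0,q}(E)$ by the graded Leibniz rule and observe that running the same calculation in reverse, using the assumed Maurer--Cartan equation, gives $(\dbar_E+A_t)^2=0$. The linearized Newlander--Nirenberg theorem quoted in the excerpt then promotes $(E,\dbar_E+A_t)$ to a holomorphic vector bundle, and smooth dependence of $A_t$ on $t$ assembles these into a family of deformations of $E$. The only real point requiring care is notational: correctly identifying $A_t$ alternately as a differential operator on $\Omega^{0,\bullet}(E)$ and as wedge product with an $\text{End}(E)$-valued $(0,1)$-form, and unpacking the graded bracket $[A_t,A_t]=2A_t\wedge A_t$ in the DGLA convention. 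Beyond Newlander--Nirenberg there is no substantive analytic obstacle.
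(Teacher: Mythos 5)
Your proposal is correct and follows essentially the same route as the paper: expand $(\dbar_E+A_t)^2$, identify $\dbar_E A_t + A_t\dbar_E$ with $\dbar_{\text{End}(E)}A_t$ and $A_t\wedge A_t$ with $\tfrac{1}{2}[A_t,A_t]$, and invoke the linearized Newlander--Nirenberg theorem for both directions. The only difference is that you spell out the $\Omega^0$-linearity of $A_t$ and the converse in more detail than the paper does.
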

\begin{proof}
Note that
$$(\dbar_E + A_t)^2 = \dbar_EA_t + A_t\dbar_E + A_t\wedge A_t = \dbar_{\text{End}(E)}A_t + \frac{1}{2}[A_t,A_t].$$
The result follows from the linearized version of the Newlander--Nirenberg Theorem.
\end{proof}

\subsection{Deformations of holomorphic pairs and the operator $\bar{D}_t$}

\begin{definition}
A {\em holomorphic pair} $(X,E)$ consists of a compact complex manifold $X$ together with a holomorphic vector bundle $E$ over $X$.
\end{definition}

\begin{definition}
Let $(X,E)$ be a holomorphic pair.
A {\em family of deformations} of $(X,E)$ over a small ball $\Delta$ centered at the origin in $\bC^d$ consists of a proper and submersive holomorphic map $\pi: \mathcal{X} \to \Delta$ (a family of deformations of $X$ over $\Delta$) and a holomorphic vector bundle $\mathcal{E} \to \mathcal{X}$ such that $\pi^{-1}(0) = X$ and $\mathcal{E}|_{\pi^{-1}(0)} = E$. For $t \in \Delta$, we denote by $(X_t, E_t)$ the holomorphic pair parametrized by $t$.
\end{definition}

By the theorem of Ehresmann, if $\Delta$ is chosen to be small enough, the family $\mathcal{X}$ is smoothly trivial, i.e. one can find a diffeomorphism $F:\mathcal{X}\rightarrow \Delta\times X$. Restricting to a fiber $\mathcal{X}_t\subset\mathcal{X}$, one can push forward the complex structure on $\mathcal{X}_t$ to define $J_t$ on $X_t:=\{t\}\times X$ via $F$. One can also trivialize $\mathcal{E}$ as $\Delta\times E$ by a smooth bundle isomorphism $P$ and the holomorphic structure on $E_t:=\{t\}\times E$ is induced from that on $\mathcal{E}|_{\mathcal{X}_t}$ via the map $P$. Hence we can assume that our family is a smoothly trivial family $\Delta\times E\rightarrow\Delta\times X$ over a small ball $\Delta$ in $\mathbb{C}^d$ centered at the origin.

Now let $\{(X_t,E_t)\}_{t \in \Delta}$ be a family of deformations of $(X,E)$. By definition, $\{X_t\}_{t \in \Delta}$ is a family of deformations of $X$, so it can be represented by an analytic family of sections $\varphi_t \in \Omega^{0,1}\left(T_X\right)$ satisfying the Maurer--Cartan equation \eqref{eqn:MC_manifold}.
Define the operator $\bar{D}_t:\Omega^{0,q}(E) \to \Omega^{0,q+1}(E)$ by
\begin{align*}
\bar{D}_t(s^ke_k) = (\bar{\partial}+\varphi_t\lrcorner\partial)s^k\otimes e_k,
\end{align*}
where $\{e_k\}$ is a local holomorphic frame of $E_t$.
\begin{proposition}
The linear operator $\bar{D}_t$ is well-defined, that is, independent of the local holomorphic frame of $E_t$. Moreover, it satisfies the Leibniz rule
$$\bar{D}_t(\alpha\otimes s)=(\bar{\partial}+\varphi_t\lrcorner\partial)\alpha\otimes s+(-1)^{|\alpha|}\alpha\wedge\bar{D}_t(s)$$
for any $s\in\Omega^0(E)$ and $\alpha\in\Omega^{0,*}(X)$. Also, $\bar{D}_t(s)=0$ if and only if $\bar{\partial}_{E_t}(s)=0$.
\end{proposition}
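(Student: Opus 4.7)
The plan is to prove the three assertions (well-definedness, Leibniz rule, kernel characterization) in that order, and in each case the key leverage is Proposition \ref{prop:auxiliary_operator_manifold}: local smooth functions annihilated by $\bar{\partial}+\varphi_t\lrcorner\partial$ are exactly the local holomorphic functions on $X_t$. A preliminary observation I will use throughout is that the operator $\bar{\partial}+\varphi_t\lrcorner\partial$ extends naturally to an operator $\Omega^{0,q}\to\Omega^{0,q+1}$ (since $\partial$ sends a $(0,q)$-form to a $(1,q)$-form, which can then be contracted with $\varphi_t\in\Omega^{0,1}(T_X)$ to yield a $(0,q+1)$-form), and that both $\bar{\partial}$ and $\varphi_t\lrcorner\partial$ are derivations; hence $\bar{\partial}+\varphi_t\lrcorner\partial$ satisfies the graded Leibniz rule on $\Omega^{0,\bullet}$.

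For well-definedness, I will take two local holomorphic frames $\{e_k\}$, $\{e'_k\}$ of $E_t$ related by $e'_k=g^j_k e_j$, where $g^j_k$ are holomorphic on $X_t$. Writing a section $s$ in both frames as $s=\tilde{s}^k e'_k=(g^j_k\tilde{s}^k)e_j$, the two candidate definitions of $\bar{D}_ts$ differ by terms involving $(\bar{\partial}+\varphi_t\lrcorner\partial)g^j_k$, which vanish by Proposition \ref{prop:auxiliary_operator_manifold}, so the two expressions agree.

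For the Leibniz rule, I will write $s=s^k e_k$ in a local holomorphic frame of $E_t$ and compute $\bar{D}_t(\alpha\otimes s)=(\bar{\partial}+\varphi_t\lrcorner\partial)(\alpha s^k)\otimes e_k$, then apply the graded Leibniz rule for $\bar{\partial}+\varphi_t\lrcorner\partial$ on forms to split this into $(\bar{\partial}+\varphi_t\lrcorner\partial)\alpha\otimes s+(-1)^{|\alpha|}\alpha\wedge\bar{D}_ts$. For the kernel statement, the same local expression $s=s^ke_k$ gives $\bar{\partial}_{E_t}s=(\bar{\partial}_t s^k)\otimes e_k$ because $\{e_k\}$ is holomorphic on $E_t$, while $\bar{D}_ts=((\bar{\partial}+\varphi_t\lrcorner\partial)s^k)\otimes e_k$ by definition; Proposition \ref{prop:auxiliary_operator_manifold} then forces the two to vanish simultaneously.

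I do not expect any substantive obstacle: the only subtle point is that the definition of $\bar{D}_t$ is tied to the $J_t$-holomorphic frame, so one must invoke Proposition \ref{prop:auxiliary_operator_manifold} (or equivalently the fact that $T^{0,1}_{X_t}$ is locally spanned by $\partial_{\bar z^i}+\varphi^j_i\partial_{z^j}$) at precisely the right moment to convert the $J_t$-holomorphicity of the transition matrices into $(\bar{\partial}+\varphi_t\lrcorner\partial)$-closedness. Everything else is a short computation.
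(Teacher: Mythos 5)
Your proposal is correct and follows essentially the same route as the paper's own proof: well-definedness comes from the $J_t$-holomorphy of the transition matrix together with Proposition \ref{prop:auxiliary_operator_manifold}, the Leibniz rule from the derivation property of $\bar{\partial}+\varphi_t\lrcorner\partial$, and the kernel characterization from the same local computation $\bar{D}_t(s)=0 \Leftrightarrow (\bar{\partial}+\varphi_t\lrcorner\partial)s^k=0 \Leftrightarrow \bar{\partial}_t s^k=0 \Leftrightarrow \bar{\partial}_{E_t}(s)=0$. There is no gap to report.
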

\begin{proof}
To prove well-definedness, we need to show that $\bar{D}_t$ is independent of the choice of a local holomorphic frame $\{e_k\}$ of $E_t$. So suppose $\{f_j\}$ is another local holomorphic frame of $E_t$. Let $\tau^k_j$ be local holomorphic functions on $X_t$ such that $f_j=\tau^k_je_k$. Then for a local section $s=s^ke_k=\widetilde{s}^jf_j$, we have $\widetilde{s}^j=s^k\tau^j_k$ and thus
\begin{align*}
\bar{D}_t(\widetilde{s}^jf_j)
& = (\dbar+\varphi_t\lrcorner\partial)\widetilde{s}^j\otimes f_j = (\dbar+\varphi_t\lrcorner\partial)(s^k\tau^j_k)\otimes f_j\\
& = (\dbar+\varphi_t\lrcorner\partial)s^k\otimes\tau^j_kf_j = \bar{D}_t(s^ke_k).
\end{align*}
Hence $\bar{D}_t$ is well-defined.

The Leibniz rule for $\bar{D}_t$ is clear since $\dbar$ and $\partial$ both satisfy the usual Leibniz rule. Finally, for a smooth section $s$ of $E$, if we write $s=s^ke_k$ locally with $\{e_k\}$ a local holomorphic frame of $E_t$, then we have
$$\bar{D}_t(s)=0 \Longleftrightarrow (\dbar+\varphi_t\lrcorner\partial)s^k=0 \Longleftrightarrow \dbar_ts^k=0 \Longleftrightarrow \dbar_{E_t}(s)=0.$$
\end{proof}

We claim that $\bar{D}_t^2=0$. By our definition of $\bar{D}_t$, for any smooth function $f:X\rightarrow\mathbb{C}$ and local nowhere vanishing holomorphic section $e$ of $E_t$, we have
$$\bar{D}_t^2(fe)=(\bar{\partial}+\varphi_t\lrcorner\partial)^2f\otimes e.$$
To compute the right hand side, we need the following
\begin{lemma}\label{lem:dbar_contract}
For any $\varphi\in\Omega^{0,p}(T_X)$ and $\alpha\in\Omega^1(E)$, we have the Leibniz rule
$$\bar{\partial}_E(\varphi\lrcorner\alpha)=\bar{\partial}_{T_X}\varphi\lrcorner\alpha-(-1)^p\varphi\lrcorner\bar{\partial}_E\alpha.$$
\end{lemma}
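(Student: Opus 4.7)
The plan is to reduce the identity to a local computation: both sides are $E$-valued $(0,p+1)$-forms built from globally defined operators, so verifying it in holomorphic coordinates on a small chart and a local smooth frame of $E$ will suffice.

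First, I split $\alpha = \alpha^{1,0} + \alpha^{0,1}$ and observe that since $\varphi$ takes values in the holomorphic tangent bundle, the contraction $\lrcorner$ kills the $(0,1)$-component of $\alpha$. Fixing local holomorphic coordinates $z^1,\ldots,z^n$, I expand
$$\varphi = \varphi^k \otimes \partial_{z^k}, \qquad \alpha = dz^j \otimes \alpha_j + d\bar{z}^j \otimes \beta_j,$$
with $\varphi^k \in \Omega^{0,p}$ and $\alpha_j,\beta_j$ smooth local sections of $E$. Then $\varphi \lrcorner \alpha = \varphi^k \otimes \alpha_k$, and the standard Leibniz rule for $\bar{\partial}_E$ on $E$-valued forms immediately gives
$$\bar{\partial}_E(\varphi \lrcorner \alpha) = \bar{\partial}\varphi^k \otimes \alpha_k + (-1)^p\, \varphi^k \wedge \bar{\partial}_E \alpha_k.$$

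Second, I expand the right-hand side in the same coordinates. Since $\{\partial_{z^k}\}$ is a local holomorphic frame of $T_X$, one has $\bar{\partial}_{T_X}\varphi = \bar{\partial}\varphi^k \otimes \partial_{z^k}$, so $\bar{\partial}_{T_X}\varphi \lrcorner \alpha = \bar{\partial}\varphi^k \otimes \alpha_k$, reproducing the first term. For the second term, Leibniz gives $\bar{\partial}_E \alpha = -dz^j \wedge \bar{\partial}_E \alpha_j - d\bar{z}^j \wedge \bar{\partial}_E \beta_j$; contracting with $\partial_{z^k}$ via the interior-product Leibniz rule (and using that $\partial_{z^k}$ annihilates any $(0,1)$-form) yields $\partial_{z^k}\lrcorner \bar{\partial}_E\alpha = -\bar{\partial}_E\alpha_k$, hence $\varphi \lrcorner \bar{\partial}_E \alpha = -\varphi^k \wedge \bar{\partial}_E \alpha_k$. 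Multiplying by $-(-1)^p$ then reproduces the remaining term on the LHS.

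The only real obstacle here is sign bookkeeping: the $(-1)^p$ from the Leibniz rule for $\bar{\partial}_E$ has to conspire with the sign picked up when the interior product $\partial_{z^k}\lrcorner$ commutes past the factor $dz^j$ in $\bar{\partial}_E\alpha$. Once these signs are tracked carefully they match, and no further global argument is needed since the identity is tensorial in $\varphi$ and $\alpha$ and the frames $\{\partial_{z^k}\}$, $\{dz^j\}$ are holomorphic on any sufficiently small chart.
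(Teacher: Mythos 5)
Your proof is correct and follows essentially the same route as the paper's: a local computation in holomorphic coordinates, writing $\varphi$ against the holomorphic frame $\{\partial_{z^k}\}$, applying the Leibniz rule for $\bar{\partial}_E$ to $\varphi\lrcorner\alpha$, and observing that only the $(1,0)$-part of $\alpha$ survives the contraction so that $\varphi\lrcorner\bar{\partial}_E\alpha=-\varphi^k\wedge\bar{\partial}_E\alpha_k$. The only cosmetic difference is that you split $\alpha$ into its $(1,0)$ and $(0,1)$ components explicitly at the outset, whereas the paper sets $\alpha_i=\alpha(\partial_{z^i})$ and only invokes the type reduction in the final step; the sign bookkeeping is identical.
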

\begin{proof}
Writing $\varphi=\varphi_J^i d\bar{z}^J \otimes \frac{\partial}{\partial z^i}$, we have
$$\varphi\lrcorner\alpha=\varphi_J^i d\bar{z}^J\otimes\alpha\left(\pd{}{z^i}\right).$$
Let $\alpha_i:=\alpha(\pd{}{z^i})\in\Omega^0(E)$. Then
\begin{align*}
\bar{\partial}_E(\varphi\lrcorner\alpha)
& = (\bar{\partial}\varphi_J^i\wedge d\bar{z}^J)\otimes\alpha_i+(-1)^p\varphi_J^i d\bar{z}^J\wedge\bar{\partial}_E\alpha_i\\
& = \bar{\partial}_{T_X}\varphi\lrcorner\alpha+(-1)^p\varphi_J^i d\bar{z}^J\wedge\bar{\partial}_E\alpha_i.
\end{align*}
To compute the last term, first note that the contraction of $\frac{\partial}{\partial z^i}$ with $\alpha$ is taken in the $(1,0)$-part, we can therefore assume $\alpha=\alpha_i^kdz^i\otimes e_k$, where $\{e_k\}$ is a local holomorphic frame of $E$. So we have
\begin{align*}
\varphi_J^i d\bar{z}^J\wedge\bar{\partial}_E\alpha_i
& = -\varphi_J^i d\bar{z}^J\wedge\left(\frac{\partial}{\partial z^i}\lrcorner\bar{\partial}\alpha_l^k\wedge dz^l\right)\otimes e_k\\
& = -\varphi_J^i d\bar{z}^J\otimes\frac{\partial}{\partial z^i}\lrcorner\bar{\partial}_E(\alpha_l^kdz^l\otimes e_k) = -\varphi\lrcorner\bar{\partial}_E\alpha,
\end{align*}
and hence the desired formula.
\end{proof}

We can now compute $\bar{D}_t^2$.
\begin{lemma}\label{lem:operator_square_mfd}
For any smooth function, $f:X\rightarrow\mathbb{C}$, we have the equality
$$(\bar{\partial}+\varphi_t\lrcorner\partial)^2f=\left(\dbar_{T_X}\varphi_t+\frac{1}{2}[\varphi_t,\varphi_t]\right)\lrcorner\partial f.$$
\end{lemma}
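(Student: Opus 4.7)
The plan is to simply expand the square $(\bar{\partial}+\varphi_t\lrcorner\partial)^2 f$ into four pieces and identify each one. Writing
\begin{equation*}
(\bar{\partial}+\varphi_t\lrcorner\partial)^2 f = \bar{\partial}^2 f + \bar{\partial}(\varphi_t\lrcorner\partial f) + \varphi_t\lrcorner\partial(\bar{\partial} f) + \varphi_t\lrcorner\partial(\varphi_t\lrcorner\partial f),
\end{equation*}
the first term vanishes. The third term can be rewritten as $-\varphi_t\lrcorner\bar{\partial}(\partial f)$ using $\partial\bar{\partial}=-\bar{\partial}\partial$, so it will cancel something produced by the second term.

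For the second term, I would apply Lemma \ref{lem:dbar_contract} to the case $E=\mathcal{O}_X$ with $\alpha=\partial f\in\Omega^1$ and $\varphi=\varphi_t\in\Omega^{0,1}(T_X)$; this gives
\begin{equation*}
\bar{\partial}(\varphi_t\lrcorner\partial f) = \bar{\partial}_{T_X}\varphi_t\lrcorner\partial f + \varphi_t\lrcorner\bar{\partial}(\partial f),
\end{equation*}
whose second piece exactly cancels the third term above. Thus the linear-in-$\varphi_t$ contribution collapses to $\bar{\partial}_{T_X}\varphi_t\lrcorner\partial f$, which is the first half of the right-hand side of the claimed identity.

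It remains to show that $\varphi_t\lrcorner\partial(\varphi_t\lrcorner\partial f)=\tfrac{1}{2}[\varphi_t,\varphi_t]\lrcorner\partial f$, and this is the main (but routine) computational obstacle. I would work in local holomorphic coordinates, writing $\varphi_t=\varphi^j_i\,d\bar{z}^i\otimes\partial_{z^j}$, so that $\varphi_t\lrcorner\partial f=\varphi^j_i\,(\partial_{z^j}f)\,d\bar{z}^i$. Applying $\partial$ and then contracting with $\varphi_t$ produces two kinds of terms: one with first derivatives of $\varphi_t$ of the form $\varphi^k_a\,(\partial_{z^k}\varphi^j_b)\,(\partial_{z^j}f)\,d\bar{z}^a\wedge d\bar{z}^b$, and one with second derivatives of $f$ of the form $\varphi^k_a\varphi^j_b\,(\partial_{z^k}\partial_{z^j}f)\,d\bar{z}^a\wedge d\bar{z}^b$. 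The second kind is symmetric in $(k,j)$ but antisymmetric in $(a,b)$, hence vanishes after the wedge. The surviving first kind matches, using the local formula $\tfrac{1}{2}[\varphi_t,\varphi_t]=\varphi^i_a(\partial_{z^i}\varphi^j_b)\,d\bar{z}^a\wedge d\bar{z}^b\otimes\partial_{z^j}$ for the Schouten bracket on $\Omega^{0,1}(T_X)$, exactly the desired expression $\tfrac{1}{2}[\varphi_t,\varphi_t]\lrcorner\partial f$. Combining the two contributions gives the lemma.
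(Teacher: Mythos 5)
Your proposal is correct and follows essentially the same route as the paper: the same expansion of the square, the same application of Lemma \ref{lem:dbar_contract} (with $E=\mathcal{O}_X$) combined with $\partial\bar{\partial}=-\bar{\partial}\partial$ to isolate $\bar{\partial}_{T_X}\varphi_t\lrcorner\partial f$, and the same local-coordinate computation with the symmetry/antisymmetry cancellation to identify the quadratic term with $\tfrac{1}{2}[\varphi_t,\varphi_t]\lrcorner\partial f$. No gaps.
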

\begin{proof}
First, we have
$$(\bar{\partial} + \varphi_t\lrcorner\partial)^2f = \dbar(\varphi_t\lrcorner\partial f) + \varphi_t\lrcorner\partial\dbar f + \varphi_t\lrcorner\partial(\varphi_t\lrcorner\partial f).$$
By Lemma \ref{lem:dbar_contract}, the first term is given by
$$\dbar(\varphi_t\lrcorner\partial f)=\dbar_{T_X}\varphi_t\lrcorner\partial f+\varphi_t\lrcorner\dbar\partial f.$$
Since $\partial\dbar=-\dbar\partial$, we have
$$\dbar(\varphi_t\lrcorner\partial f)+\varphi_t\lrcorner\partial\dbar f=\dbar_{T_X}\varphi_t\lrcorner\partial f.$$
For the last term, by writing $\varphi_t=\varphi^l_md\bar{z}^m\otimes\pd{}{z^l}$ in local coordinates, we have
$$\varphi_t\lrcorner\partial f=\varphi^l_m\pd{f}{z^l}d\bar{z}^m,$$
and so
$$\varphi_t\lrcorner\partial(\varphi_t\lrcorner\partial f)=\varphi^i_j\pd{\varphi^l_m}{z^i}\pd{f}{z^l}d\bar{z}^j\wedge d\bar{z}^m+\varphi^i_j\varphi^l_m\secpd{f}{z^i}{z^l}d\bar{z}^j\wedge d\bar{z}^m$$
But
$$\varphi^i_j\varphi^l_m\secpd{f}{z^i}{z^l}d\bar{z}^j\wedge d\bar{z}^m=-\varphi^l_m\varphi^i_j\secpd{f}{z^l}{z^i}d\bar{z}^m\wedge d\bar{z}^j,$$
so we obtain
$$\varphi_t\lrcorner\partial(\varphi_t\lrcorner\partial f)=\varphi^i_j\pd{\varphi^l_m}{z^i}\pd{f}{z^l}d\bar{z}^j\wedge d\bar{z}^m=\frac{1}{2}[\varphi_t,\varphi_t]\lrcorner\partial f.$$
The result follows.
\end{proof}

As $\{X_t\}_{t\in \Delta}$ is an honest family of deformations of $X$, the Maurer--Cartan equation \eqref{eqn:MC_manifold} for $\varphi_t$ holds. Hence we have
\begin{proposition}
$\bar{D}_t^2=0$.
\end{proposition}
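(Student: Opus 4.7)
The plan is to reduce the identity $\bar{D}_t^2 = 0$ to a purely local, scalar statement, and then invoke Lemma \ref{lem:operator_square_mfd} together with the Maurer--Cartan equation \eqref{eqn:MC_manifold} for $\varphi_t$, which is available precisely because $\{X_t\}_{t\in\Delta}$ is an honest family of deformations of $X$.

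First I would note that since vanishing of $\bar{D}_t^2$ is a local statement, it suffices to work over a small open set $U \subset X$ on which $E_t$ admits a holomorphic frame $\{e_k\}$. An arbitrary smooth section of $E$ over $U$ can be written as $s = s^k e_k$ with $s^k$ smooth functions. Since each $e_k$ is a holomorphic section of $E_t$, the preceding proposition yields $\bar{D}_t(e_k) = 0$. The Leibniz rule established just above then gives
$$\bar{D}_t^2(s^k e_k) = \bar{D}_t\bigl((\bar{\partial} + \varphi_t\lrcorner\partial)s^k \otimes e_k\bigr) = (\bar{\partial} + \varphi_t\lrcorner\partial)^2 s^k \otimes e_k,$$
isolating everything to the scalar operator $(\bar{\partial} + \varphi_t\lrcorner\partial)^2$ acting on smooth functions.

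Next I would apply Lemma \ref{lem:operator_square_mfd} to rewrite the right-hand side as
$$\Bigl(\dbar_{T_X}\varphi_t + \tfrac{1}{2}[\varphi_t,\varphi_t]\Bigr)\lrcorner\, \partial s^k \,\otimes\, e_k.$$
Since $\{X_t\}_{t\in\Delta}$ is a family of deformations of $X$, the Maurer--Cartan equation \eqref{eqn:MC_manifold} holds, so the parenthesized factor vanishes identically, giving $\bar{D}_t^2(s) = 0$ on $U$. By the already-verified independence of $\bar{D}_t$ from the chosen local holomorphic frame, the local vanishing patches together to the desired global identity.

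I do not anticipate a serious obstacle: the only thing that has to be checked carefully is the Leibniz step that reduces $\bar{D}_t^2$ on $E$-valued forms to the scalar operator squared, and this is immediate once one uses $\bar{D}_t(e_k) = 0$. All of the analytic content has already been packaged into Lemma \ref{lem:operator_square_mfd} and into the assumption that $\varphi_t$ solves the Maurer--Cartan equation for the complex-manifold deformation problem.
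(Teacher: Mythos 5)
Your proof is correct and follows essentially the same route as the paper: reduce to a local holomorphic frame of $E_t$ so that $\bar{D}_t^2(s^k e_k)=(\bar{\partial}+\varphi_t\lrcorner\partial)^2 s^k\otimes e_k$, then apply Lemma \ref{lem:operator_square_mfd} and the Maurer--Cartan equation \eqref{eqn:MC_manifold} for $\varphi_t$. The only difference is that you spell out the Leibniz/patching steps that the paper leaves implicit.
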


From the viewpoint of Proposition \ref{prop:auxiliary_operator_manifold}, it is natural to compare the operator $\bar{D}_t$ with $\dbar_E+\varphi_t\lrcorner\nabla$.
\begin{proposition}
$A_t:=\bar{D}_t-\bar{\partial}_E-\varphi_t\lrcorner\nabla\in\Omega^{0,1}(\text{End}(E))$.
\end{proposition}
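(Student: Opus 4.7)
The plan is to verify that the operator $A_t := \bar{D}_t - \bar{\partial}_E - \varphi_t \lrcorner \nabla$, a priori just a $\mathbb{C}$-linear map $\Omega^0(E) \to \Omega^{0,1}(E)$, is in fact $C^\infty$-linear (hence tensorial) and takes values in $\Omega^{0,1}(E)$, so that it corresponds to an element of $\Omega^{0,1}(\mathrm{End}(E))$.

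First I would check that each of the three operators lands in $\Omega^{0,1}(E)$. For $\bar{D}_t$ this is its definition; for $\bar{\partial}_E$ this is obvious; for $\varphi_t \lrcorner \nabla$, observe that $\nabla s \in \Omega^1(E)$ splits as $\nabla s = \partial_E s + \bar{\partial}_E s$, and since $\varphi_t \in \Omega^{0,1}(T_X)$ takes values in the holomorphic tangent bundle (of type $(1,0)$), the contraction $\varphi_t \lrcorner \nabla s = \varphi_t \lrcorner \partial_E s$ is a $(0,1)$-form with values in $E$. Consequently $A_t$ maps $\Omega^0(E)$ into $\Omega^{0,1}(E)$.

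The core step is verifying $C^\infty$-linearity: for any smooth function $f$ and section $s \in \Omega^0(E)$, I want $A_t(fs) = f A_t(s)$. I would apply the Leibniz rules for each piece:
\begin{align*}
\bar{D}_t(fs) &= (\bar{\partial} + \varphi_t \lrcorner \partial)f \otimes s + f\, \bar{D}_t(s), \\
\bar{\partial}_E(fs) &= \bar{\partial} f \otimes s + f\, \bar{\partial}_E(s), \\
(\varphi_t \lrcorner \nabla)(fs) &= \varphi_t \lrcorner (df \otimes s + f \nabla s) = (\varphi_t \lrcorner \partial f) \otimes s + f(\varphi_t \lrcorner \nabla s),
\end{align*}
where in the last line I again use that $\varphi_t$ contracts only against the $(1,0)$-part of $df$. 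Subtracting, the $\bar{\partial} f \otimes s$ terms cancel between $\bar{D}_t(fs)$ and $\bar{\partial}_E(fs)$, and the $\varphi_t \lrcorner \partial f \otimes s$ terms cancel between $\bar{D}_t(fs)$ and $(\varphi_t \lrcorner \nabla)(fs)$, leaving exactly $f A_t(s)$.

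The only subtle point—really the main (very minor) obstacle—is the bookkeeping of the $(1,0)$-type of $\varphi_t$'s target, which explains why $\varphi_t \lrcorner df = \varphi_t \lrcorner \partial f$ and why the $\bar{\partial} f$ contribution from $\bar{D}_t$ has no counterpart in $\varphi_t \lrcorner \nabla$ but is exactly cancelled by $\bar{\partial}_E$. Once this is accounted for, $C^\infty$-linearity gives $A_t \in \Omega^{0,1}(\mathrm{End}(E))$.
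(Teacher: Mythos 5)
Your proof is correct and follows essentially the same route as the paper's: apply the Leibniz rules to each of the three operators, use that $\varphi_t$ contracts only against the $(1,0)$-part of $df$, and observe that the derivative terms cancel, leaving $A_t(fs)=fA_t(s)$. The extra preliminary check that each operator lands in $\Omega^{0,1}(E)$ is a harmless (and sensible) addition that the paper leaves implicit.
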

\begin{proof}
Let $f$ be a smooth function and $s$ a smooth section of $E$. Using the Leibniz rules, and the fact that the contraction is only taken in the $(1,0)$-part, we have
\begin{align*}
A_t(fs)
& = (\bar{\partial}+\varphi_t\lrcorner\partial)f\otimes s+f\bar{D}_t(s)-\bar{\partial}f\otimes s-f\bar{\partial}_E(s)-\varphi_t\lrcorner\nabla(fs)\\
& = (\varphi_t\lrcorner\partial)f\otimes s+f\bar{D}_t(s)-f\bar{\partial}_E(s)-\varphi_t\lrcorner df\otimes s-f\varphi_t\lrcorner\nabla(s) = fA_t(s).
\end{align*}
\end{proof}

In the other direction, suppose we are now given elements $A_t\in\Omega^{0,1}(\text{End}(E))$ and $\varphi_t\in\Omega^{0,1}(T_X)$, parameterized by $t\in\Delta$, we can then define an operator $\bar{D}_t:\Omega^{0}(E)\rightarrow\Omega^{0,1}(E)$ by
$$\bar{D}_t:=\dbar_E+\varphi_t\lrcorner\nabla+A_t.$$
We extend $\bar{D}_t$ to $\Omega^{0,q}(E)$ in the obvious way, so that the Leibniz rule
$$\bar{D}_t(\alpha\otimes s)=(\dbar+\varphi_t\lrcorner\partial)\alpha\otimes s+(-1)^{|\alpha|}\alpha\wedge\bar{D}_ts$$
holds.
We want to show that if $\bar{D}_t^2=0$, then $(A_t,\varphi_t)$ defines a holomorphic pair $(X_t,E_t)$. First of all, we have
\begin{proposition}
If $\bar{D}_t^2=0$, then $X_t$ is a complex manifold.
\end{proposition}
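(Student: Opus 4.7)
The strategy is to extract the Maurer--Cartan equation for $\varphi_t$ from the hypothesis $\bar{D}_t^2 = 0$ and then invoke the Newlander--Nirenberg theorem. The key observation is that the Leibniz rule lets us reduce $\bar{D}_t^2$ applied to an $E$-valued form to an action of $(\dbar + \varphi_t \lrcorner \partial)^2$ on the scalar factor, which by Lemma \ref{lem:operator_square_mfd} is exactly the Maurer--Cartan expression contracted with $\partial$ of the scalar.

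First I would compute $\bar{D}_t^2(fs)$ for an arbitrary smooth function $f$ and smooth section $s$ of $E$. Using the Leibniz rule
$\bar{D}_t(\alpha\otimes s)=(\dbar+\varphi_t\lrcorner\partial)\alpha\otimes s+(-1)^{|\alpha|}\alpha\wedge\bar{D}_ts$
twice (first for $\alpha=f$, then for $\alpha=(\dbar+\varphi_t\lrcorner\partial)f$, a $(0,1)$-form), the two cross terms involving $(\dbar+\varphi_t\lrcorner\partial)f\wedge\bar{D}_ts$ cancel with opposite signs, leaving
\[
\bar{D}_t^2(fs) = (\dbar+\varphi_t\lrcorner\partial)^2 f \otimes s + f\,\bar{D}_t^2 s.
\]

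Next, since $\bar{D}_t^2=0$ as an operator, in particular $\bar{D}_t^2 s = 0$ and $\bar{D}_t^2(fs)=0$. Combining this with Lemma \ref{lem:operator_square_mfd}, the displayed identity becomes
\[
\Bigl(\dbar_{T_X}\varphi_t+\tfrac{1}{2}[\varphi_t,\varphi_t]\Bigr)\lrcorner\partial f \otimes s = 0
\]
for every smooth function $f$ and every smooth section $s$ of $E$. Varying $s$ (which is nowhere-vanishing locally) and varying $f$ so that $\partial f$ spans $\Omega^{1,0}$ pointwise, we conclude the pointwise vanishing
\[
\dbar_{T_X}\varphi_t+\tfrac{1}{2}[\varphi_t,\varphi_t] = 0,
\]
i.e.\ $\varphi_t$ satisfies the classical Maurer--Cartan equation \eqref{eqn:MC_manifold}. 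Finally, by the Newlander--Nirenberg theorem, the almost complex structure $J_t$ determined by $\varphi_t$ (whose $(0,1)$-tangent bundle is locally spanned by $\partial/\partial\bar{z}^i+\varphi^j_i\,\partial/\partial z^j$, as in the proof of Proposition \ref{prop:auxiliary_operator_manifold}) is integrable, so $X_t$ is a complex manifold.

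The only real subtlety is the bookkeeping in step one: one must be careful that applying $\bar{D}_t$ to $(\dbar+\varphi_t\lrcorner\partial)f\otimes s$ produces a sign $(-1)^1=-1$ in front of the wedge term which exactly cancels the term coming from $\bar{D}_t(f\cdot\bar{D}_ts)$, isolating $(\dbar+\varphi_t\lrcorner\partial)^2 f$ cleanly. No part of the argument uses $A_t$; this is as expected, since the statement concerns only the complex structure on $X_t$ and not yet the holomorphic structure on $E_t$, which will be handled in the subsequent proposition.
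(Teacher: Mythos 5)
Your proposal is correct and follows essentially the same route as the paper: apply the Leibniz rule to $\bar{D}_t^2(fs)$ to isolate $(\dbar+\varphi_t\lrcorner\partial)^2 f\otimes s$, invoke Lemma \ref{lem:operator_square_mfd} to recover the Maurer--Cartan equation for $\varphi_t$, and conclude integrability by Newlander--Nirenberg. You merely spell out the cancellation of cross terms and the pointwise-spanning argument that the paper leaves implicit.
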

\begin{proof}
Using the Leibniz rule, we have for any smooth function and sections of $E$ that
$$0=\bar{D}^2(fs)=(\dbar+\varphi_t\lrcorner\partial)^2f\otimes s.$$
Hence $(\dbar+\varphi_t\lrcorner\partial)^2=0$, which is equivalent to the Maurer--Cartan equation \eqref{eqn:MC_manifold} by Lemma \ref{lem:operator_square_mfd}. Therefore, the almost complex structure defined by $\varphi_t$ is integrable.
\end{proof}

We now need to show that $E$ also admits a holomorphic structure over $X_t$. We will follow the approach of \cite{Moroianu_book}. Let us first make the following assertion:
\begin{center}
\textit{Any smooth sections of E can locally be written as $s^ke_k$,}
\end{center}
where $\{e_k\}\subset \text{ker}(\bar{D}_t)$. We can then define $\bar{\partial}_{E_t}$ by
$$\bar{\partial}_{E_t}(s^ke_k):=\bar{\partial}_ts^k\otimes e_k.$$
To check that it is well-defined, suppose we have another local basis $\{f_j\}\subset \text{ker}(\bar{D}_t)$, then there exist $(h_j^k)$ such that $f_j=h_j^ke_k$. Applying $\bar{D}_t$, we have
$$(\dbar+\varphi_t\lrcorner\partial)h_j^k\otimes e_k=0.$$
Since $\{e_k\}$ is assumed to be a local basis, we have $(\dbar+\varphi_t\lrcorner\partial)h_j^k=0$, which is equivalent to $\dbar_th_j^k=0$. Hence
$$\dbar_{E_t}(\widetilde{s}^jf_j)=\dbar_{t}\widetilde{s}^j\otimes f_j=\dbar_t(s^kh_k^j)\otimes f_j=\dbar_t(s^j)\otimes h_k^jf_j=\dbar_{E_t}(s^ke_k).$$
This proves well-definedness.

Clearly, it satisfies the Leibniz rule
$$\dbar_{E_t}(\alpha\otimes s)=\dbar_t\alpha\otimes s+(-1)^{|\alpha|}\alpha\wedge\dbar_{E_t}s$$
and $\bar{\partial}_{E_t}^2=0$ since $\varphi_t$ defines an integrable complex structure on $X$. Hence by the linearized version of the Newlander--Nirenberg Theorem, $E_t = (E, \dbar_{E_t})$ is a holomorphic vector bundle over $X_t$.

It remains to prove that our assertion is correct:
\begin{lemma}
$\text{ker}(\bar{D}_t)$ generates $\Omega^{0}(E)$ locally.
\end{lemma}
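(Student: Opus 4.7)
The plan is to apply the linearized Newlander--Nirenberg theorem (\cite[Theorem 2.6.26]{Huybrechts_book}, \cite[Theorem 3.2]{Moroianu_book}) on the deformed complex manifold $X_t$. Since $\bar{D}_t^2 = 0$, the preceding proposition shows that $(X, J_t) =: X_t$ is a genuine complex manifold; moreover, for $\varphi_t$ small, the canonical projection $P_t : \Omega^{0,1}_X \to \Omega^{0,1}_{X_t}$ (with inverse $\pi_t$ from the diagram in the Introduction) is an isomorphism of smooth bundles.

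First I would define the transferred operator $\tilde{\dbar}_t := P_t \circ \bar{D}_t : \Omega^0(E) \to \Omega^{0,1}_{X_t}(E)$. Using the Leibniz rule for $\bar{D}_t$ together with the pointwise identity $P_t\bigl((\bar{\partial} + \varphi_t \lrcorner \partial)f\bigr) = \dbar_t f$ (implicit in Proposition \ref{prop:auxiliary_operator_manifold}), I would verify
$$\tilde{\dbar}_t(fs) = \dbar_t f \otimes s + f \, \tilde{\dbar}_t s,$$
so that $\tilde{\dbar}_t$ is a genuine $\bar{\partial}$-type operator on the smooth bundle $E$ over $X_t$. Next I would extend $\tilde{\dbar}_t$ to $\Omega^{0,q}_{X_t}(E)$ by this Leibniz rule and check that $\tilde{\dbar}_t^2 = 0$. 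By Leibniz this reduces to the case of degree zero sections, where it follows from $\bar{D}_t^2 = 0$ together with the compatibility of $P_t$ with wedge products, namely that the induced map $\wedge^2 P_t : \Omega^{0,2}_X \to \Omega^{0,2}_{X_t}$ is an isomorphism intertwining the two extensions.

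With Leibniz and $\tilde{\dbar}_t^2 = 0$ established, the linearized Newlander--Nirenberg theorem produces, near every point $p \in X$, a local smooth frame $\{e_k\}$ of $E$ satisfying $\tilde{\dbar}_t e_k = 0$. Since $P_t$ is an isomorphism, $\tilde{\dbar}_t e_k = 0$ if and only if $\bar{D}_t e_k = 0$, so $\{e_k\} \subset \ker(\bar{D}_t)$ is a local frame generating $\Omega^0(E)$ locally, as required.

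The main obstacle is the verification of $\tilde{\dbar}_t^2 = 0$ on $\Omega^0(E)$: one must carefully track how the wedge products on $\Omega^{0,\bullet}_X$ and on $\Omega^{0,\bullet}_{X_t}$ transform under $P_t$ and its higher-degree analogues, which amounts to unpacking the commutative diagram relating $\bar{D}_t$ and $\dbar_{E_t}$ described in the Introduction. As an alternative route that avoids this bookkeeping, one could construct solutions to $\bar{D}_t e = 0$ directly by the method of successive approximations in suitable H\"older spaces, mirroring the proof of the linearized Newlander--Nirenberg theorem in \cite{Moroianu_book}; this yields a self-contained proof of the lemma.
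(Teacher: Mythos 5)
Your proposal is correct in outline, but it takes a genuinely different route from the paper. The paper does not transfer the problem to $X_t$; instead it redoes the proof of the linearized Newlander--Nirenberg theorem from scratch in the presence of the deformed structure: it writes $\bar{D}_t\sigma_i=\tau^k_i\otimes\sigma_k$ for a smooth frame, reduces the lemma to the solvability of the PDE system $(\bar{\partial}+\varphi_t\lrcorner\partial)f^k_j+f^i_j\tau^k_i=0$, builds an almost complex structure on the total space $N=U\times\mathbb{C}^r$ via the coframe $\{dz^{\alpha}-\varphi_t\lrcorner dz^{\alpha},\,dw_i-\tau_i^kw_k\}$, checks the Frobenius-type integrability condition using the Maurer--Cartan equation for $\varphi_t$ and the condition $\bar{D}_t^2=0$, invokes the \emph{nonlinear} Newlander--Nirenberg theorem on $N$, and extracts the frame change $f^i_j(z,t)=F^i_j(z,0,t)$ from the resulting holomorphic coordinates. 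Your route --- conjugating by $P_t$ to produce a genuine Dolbeault-type operator $\tilde{\dbar}_t=P_t\circ\bar{D}_t$ on $E$ over $X_t$ and then quoting the linearized Newlander--Nirenberg theorem as a black box --- is shorter and more conceptual, and the bookkeeping you flag does go through: for $\alpha,\beta\in\Omega^{0,1}_X$ the $(0,2)_t$-component of $\alpha\wedge\beta$ is exactly $P_t\alpha\wedge P_t\beta$, which together with the scalar identity $\dbar_tP_t=P_t(\dbar+\varphi_t\lrcorner\partial)$ gives $\tilde{\dbar}_t\circ P_t=P_t\circ\bar{D}_t$ on $\Omega^{0,1}_X(E)$ and hence $\tilde{\dbar}_t^2=0$. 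Two cautions: first, the identity $\dbar_tP_t=P_t(\dbar+\varphi_t\lrcorner\partial)$ is proved in the paper only in Proposition \ref{prop:isomorphic_cohomology}, which for general $E$ relies on the very lemma you are proving, so you must restrict yourself to the scalar case $E=\mathcal{O}_X$ of that proposition (which depends only on the integrability of $J_t$ and is therefore available); second, your appeal to the linearized theorem still hides an application of the nonlinear Newlander--Nirenberg theorem inside the proof of that scalar identity, so neither approach is ultimately more elementary --- the paper's version simply makes the single application of the nonlinear theorem (on the total space $N$) explicit and self-contained.
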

\begin{proof}
Let us first fix a smooth local frame $\{\sigma_k\}$ of $E_t$ over a coordinate neighborhood $U\subset X_t$. What we need are coordinate changes $(f^i_j(z,t))\in \Gamma_{sm}(U,GL_r(\mathbb{C}))$ such that $f^i_j\sigma_i\in \text{ker}(\bar{D}_t)$. Writing $\bar{D}_t\sigma_i=\tau^k_i\otimes\sigma_k$ with $\tau^k_i\in\Omega^{0,1}(X)$, the existence of $(f^i_j(z,t))$ is equivalent to
$$0=\bar{D}_t(f^i_j\sigma_i)=(\bar{\partial}f^i_j+\varphi_t\lrcorner\partial f^i_j)\otimes\sigma_i+f^i_j\tau^k_i\sigma_k.$$
This in turn is equivalent to the following system of PDEs
$$(\bar{\partial}+\varphi_t\lrcorner\partial)f^k_j+f^i_j\tau^k_i=0$$
subject to the condition:
$$\bar{D}_t^2=0\Longleftrightarrow (\bar{\partial}+\varphi_t\lrcorner\partial)\tau^i_j=\tau^i_k\wedge\tau^k_j.$$
We will show that this system is solvable, following the line of proof in \cite[Theorem 9.2]{Moroianu_book} (linearized version of the Newlander--Nirenberg Theorem).

First of all we set
$$N:=U\times\mathbb{C}^r,\ T:=span\{dz^{\alpha}-\varphi_t\lrcorner dz^{\alpha},dw_i-\tau_i^kw_k\}.$$
We want to show that $d(T)\subset\Omega^0(\bigwedge^1_{\mathbb{C}}N)\wedge T$. First we have
\begin{align*}
d(dz^{\alpha}-\varphi_t\lrcorner dz^{\alpha})
& = \frac{\partial\varphi^{\alpha}_{\beta}}{\partial z^{\gamma}}d\bar{z}^\beta\wedge dz^{\gamma} - \bar{\partial}_{T_X}\varphi_t\lrcorner dz^{\alpha}.
\end{align*}
Then applying the Maurer--Cartan equation \eqref{eqn:MC_manifold} gives
\begin{align*}
d(dz^{\alpha}-\varphi_t\lrcorner dz^{\alpha})
& = \frac{\partial\varphi^{\alpha}_{\beta}}{\partial z^{\gamma}}d\bar{z}^\beta\wedge\left(dz^{\gamma} - \varphi^{\mu}_{\eta}d\bar{z}^{\eta}\otimes\frac{\partial}{\partial z^{\mu}}\lrcorner dz^{\gamma}\right)\\
& = \frac{\partial\varphi^{\alpha}_{\beta}}{\partial z^{\gamma}}d\bar{z}^\beta\wedge(dz^{\gamma}-\varphi_t\lrcorner dz^{\gamma}).
\end{align*}

Secondly,
\begin{align*}
d(dw_i-\tau_i^lw_l)
& = -\partial\tau_i^lw_l - \bar{\partial}\tau_i^lw_l + \tau_i^l\wedge dw_l\\
& = -(\partial - \varphi_t\lrcorner\partial)\tau_i^lw_l - (\bar{\partial} + \varphi_t\lrcorner\partial)\tau_i^lw_l + \tau_i^l\wedge dw_l\\
& = -(\partial - \varphi_t\lrcorner\partial)\tau_i^lw_l - \tau_i^k\wedge\tau_k^lw_l + \tau_i^l\wedge dw_l\\
& = -(\partial - \varphi_t\lrcorner\partial)\tau_i^lw_l - \tau_i^k\wedge(dw_k-\tau_k^lw_l).
\end{align*}

Hence by the Newlander--Nirenberg Theorem, we obtain holomorphic coordinates $(\zeta^{\alpha}_t,u_i^t)$ on $N$ and smooth functions $F^{\alpha}_{\beta}(\zeta_t)=F^{\alpha}_{\beta}(z,t)$, $F^l_i(\zeta_t,u_t)=F^l_i(z,u,t)$, and $F^l_{\alpha}(\zeta_t,u_t)=F_{l\alpha}(z,w,t)$ such that
\begin{equation*}
\left\{
\begin{split}
d\zeta^{\alpha}_t =& F^{\alpha}_{\beta}(z,t)(dz^{\beta} - \varphi_t\lrcorner dz^{\beta}),\\
du_l^t =& F_l^i(z,w,t)(dw_i - \tau_i^kw_k) + F_{l\alpha}(z,w,t)(dz^{\alpha} - \varphi_t\lrcorner dz^{\alpha}).
\end{split}
\right.
\end{equation*}
Since $\{dz^{\alpha}-\varphi_t\lrcorner dz^{\alpha},dw_i-\tau_i^kw_k\}$ and $\{d\zeta^{\alpha}_t,du^i_t\}$ are basis of $T$, we see that the $(n+r)\times(n+r)$-matrix
$$\begin{pmatrix}
(F^{\alpha}_{\beta}) & (F_{i\alpha})
\\O_{r\times n} & (F_l^i)
\end{pmatrix}$$
is invertible for all $(z,w,t)$. It follows that $(F_l^i)$ is also invertible for all $(z,w,t)$.

Applying the exterior differential on $N$ and evaluating at $w=0$, we have
$$0=dF_l^i\wedge dw_i+F_l^i\tau_i^k\wedge dw_k+dF_{l\alpha}\wedge dz^{\alpha}-dF_{l\alpha}\wedge\varphi_t\lrcorner dz^{\alpha}-F_{l\alpha}d(\varphi_t\lrcorner dz^{\alpha}).$$
Comparing the $dz\wedge dw$-component on both sides gives $\partial_zF_l^i\wedge dw_i+\partial_wF_{l\alpha}\wedge dz^{\alpha}=0$, which implies, by contracting with $\varphi_t$, that
$$\varphi_t\lrcorner dF_l^i\wedge dw_i+\partial_wF_{l\alpha}\wedge\varphi_t\lrcorner dz^{\alpha}=0.$$
Then by comparing the $d\bar{z}\wedge dw$-component, we have
$$\bar{\partial}F_l^i\wedge\partial_zw_i+F_l^i\tau_i^k\wedge dw_k-\partial_wF_{l\alpha}\wedge\varphi_t\lrcorner dz^{\alpha}=0.$$
Together with the formula we just obtained, we arrive at
$$(\bar{\partial}+\varphi_t\lrcorner\partial)F_l^i(z,0)+F_l^k(z,0)\tau_k^i=0.$$
The result now follows by setting $f_i^j(z,t):=F_i^j(z,0,t)$.
\end{proof}

In summary, we have proved the following
\begin{theorem}\label{thm:Dbar_integrability}
Given $A_t\in\Omega^{0,1}(\text{End}(E))$ and $\varphi_t\in\Omega^{0,1}(T_X)$. If the induced differential operator $\bar{D}_t:\Omega^{0,q}(E)\rightarrow\Omega^{0,q+1}(E)$ satisfies $\bar{D}_t^2=0$ and the Leibniz rule
$$\bar{D}_t(\alpha\otimes s)=(\bar{\partial}+\varphi_t\lrcorner\partial)\alpha\otimes s+(-1)^{|\alpha|}\alpha\wedge\bar{D}_t(s),$$
then $E$ admits a holomorphic structure over the complex manifold $X_t$, which we will denote by $E_t\to X_t$ or just $E_t$.
\end{theorem}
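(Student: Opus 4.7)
The plan is to reduce the theorem to the linearized Newlander--Nirenberg theorem applied to a suitably defined $\dbar_{E_t}$. First, I would verify that the almost complex structure $J_t$ determined by $\varphi_t$ is integrable. By hypothesis $\bar{D}_t^2=0$, so applying this to a product $f s$ with $f$ a smooth function and $s$ a smooth section of $E$ and invoking the Leibniz rule yields $(\dbar+\varphi_t\lrcorner\partial)^2 f\otimes s=0$. Since $s$ is arbitrary, $(\dbar+\varphi_t\lrcorner\partial)^2=0$, which by Lemma \ref{lem:operator_square_mfd} forces $\dbar_{T_X}\varphi_t+\tfrac{1}{2}[\varphi_t,\varphi_t]=0$. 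This is exactly the Maurer--Cartan equation \eqref{eqn:MC_manifold}, so $J_t$ integrates to give the complex manifold $X_t$.

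Next, to equip $E$ with a holomorphic structure over $X_t$, I would construct local $\bar{D}_t$-holomorphic frames $\{e_k\}$ and \emph{define} $\dbar_{E_t}(s^k e_k):=\dbar_t s^k\otimes e_k$. Well-definedness under a change of frame $f_j=h^k_j e_k$ reduces to showing that the transition matrix satisfies $\dbar_t h^k_j=0$, which follows from $\bar{D}_t f_j=0$ together with $\bar{D}_t e_k=0$ and the Leibniz rule. Once well-defined, $\dbar_{E_t}$ manifestly satisfies the $(0,q)$-Leibniz rule on $X_t$, and $\dbar_{E_t}^2=0$ because $\dbar_t^2=0$ on functions (by the integrability of $J_t$ established above). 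The linearized Newlander--Nirenberg theorem then produces the holomorphic vector bundle $E_t$.

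The real content, and the step I expect to be the main obstacle, is the existence of local $\bar{D}_t$-holomorphic frames, i.e.\ the assertion that $\ker(\bar{D}_t)$ generates $\Omega^0(E)$ locally. Starting from an arbitrary smooth local frame $\{\sigma_k\}$ and writing $\bar{D}_t\sigma_i=\tau^k_i\otimes\sigma_k$ with $\tau^k_i\in\Omega^{0,1}$, the desired frame $\{f^i_j\sigma_i\}$ amounts to solving the PDE system
\begin{equation*}
(\dbar+\varphi_t\lrcorner\partial)f^k_j+f^i_j\tau^k_i=0,
\end{equation*}
whose integrability condition is precisely the relation $(\dbar+\varphi_t\lrcorner\partial)\tau^i_j=\tau^i_k\wedge\tau^k_j$ forced by $\bar{D}_t^2=0$. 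I would handle this by the trick in \cite[Thm.~9.2]{Moroianu_book}: on the auxiliary manifold $N=U\times\bC^r$ introduce the distribution $T$ spanned by $\{dz^\alpha-\varphi_t\lrcorner dz^\alpha,\ dw_i-\tau^k_i w_k\}$ and check using the Maurer--Cartan equation for $\varphi_t$ and the curvature-type identity for $\tau^k_i$ that $d(T)\subset \Omega^1(N)\wedge T$. The classical Newlander--Nirenberg theorem applied to this involutive distribution yields holomorphic coordinates $(\zeta^\alpha_t,u^i_t)$ on $N$. Differentiating these defining equations and comparing the $dz\wedge dw$ and $d\bar z\wedge dw$ components at $w=0$ extracts, after contracting one relation with $\varphi_t$ and combining, the required equation $(\dbar+\varphi_t\lrcorner\partial)F^i_l(z,0)+F^k_l(z,0)\tau^i_k=0$. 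Setting $f^j_i(z,t):=F^j_i(z,0,t)$ (which is invertible because the change-of-coordinate Jacobian on $N$ is) gives the sought-for frame and completes the proof.
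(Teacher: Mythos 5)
Your proposal is correct and follows essentially the same route as the paper: integrability of $J_t$ via Lemma \ref{lem:operator_square_mfd}, reduction to the linearized Newlander--Nirenberg theorem once local $\bar{D}_t$-flat frames exist, and existence of those frames via the auxiliary involutive distribution on $U\times\bC^r$ \`a la \cite[Theorem 9.2]{Moroianu_book}, extracting the frame at $w=0$. You have also correctly identified the frame-existence lemma as the genuinely nontrivial step.
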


The operator $\bar{D}_t$ gives a cochain complex
$$(\Omega^{0,\bullet}(E),\bar{D}_t).$$
It is then natural to compare the cohomologies $H^{\bullet}(X_t,E_t)$ and $H^{\bullet}(\Omega^{0,\bullet}(E),\bar{D}_t)$. But $\bar{D}_t$ captures only the holomorphicity of the pair $(X_t,E_t)$, so we would not expect $H^{\bullet}(\Omega^{0,\bullet}(E),\bar{D}_t)$ to be something new.

\begin{proposition}\label{prop:isomorphic_cohomology}
For any $t\in \Delta$, we have the isomorphism
$$H^q(X_t,E_t)\cong H^q(\Omega^{0,\bullet}(E),\bar{D}_t)$$
for any $q\geq 0$.
\end{proposition}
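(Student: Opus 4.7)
The plan is to construct an explicit chain isomorphism
$$\mathcal{P}_t^\bullet: (\Omega^{0,\bullet}_X(E), \bar{D}_t) \xrightarrow{\sim} (\Omega^{0,\bullet}_{X_t}(E_t), \dbar_{E_t})$$
and then invoke Dolbeault's theorem for the holomorphic bundle $E_t \to X_t$. For each $q \geq 0$, I set $\mathcal{P}_t^q := \bigl(\bigwedge^q P_t\bigr) \otimes \text{id}_E$, where $P_t = \pi_t^{-1}: \Omega^{0,1}_X \to \Omega^{0,1}_{X_t}$ is the isomorphism from the introduction. For $\varphi_t$ sufficiently small (which we may assume by shrinking $\Delta$ if necessary), $P_t$ and each of its multiplicative extensions are sheaf isomorphisms, and passing to cohomology would then yield
$$H^q(\Omega^{0,\bullet}_X(E), \bar{D}_t) \;\cong\; H^q(\Omega^{0,\bullet}_{X_t}(E_t), \dbar_{E_t}) \;\cong\; H^q(X_t, E_t).$$

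The heart of the argument is the scalar chain-map property
$$\dbar_t \circ P_t^q = P_t^{q+1} \circ (\dbar + \varphi_t \lrcorner \partial) \quad \text{on } \Omega^{0,q}_X,$$
proved by induction on $q$. The base case $q = 0$ is exactly the commutative diagram of Section 1. The key lemma driving the induction is the identity $P_t(d\bar{z}^i) = \dbar_t \bar{z}^i$: applying the $q=0$ diagram to $f = \bar{z}^i$ and noting $\varphi_t \lrcorner \partial \bar{z}^i = 0$ gives $\bar{D}_t \bar{z}^i = d\bar{z}^i$, hence $P_t(d\bar{z}^i) = P_t \bar{D}_t \bar{z}^i = \dbar_t \bar{z}^i$. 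In particular $\dbar_t\bigl(P_t(d\bar{z}^i)\bigr) = \dbar_t^2 \bar{z}^i = 0$, and by the Leibniz rule $\dbar_t(\eta^I) = 0$ for $\eta^I := P_t(d\bar{z}^{i_1}) \wedge \cdots \wedge P_t(d\bar{z}^{i_q})$; analogously $(\dbar + \varphi_t \lrcorner \partial)(d\bar{z}^i) = 0$. Writing $\alpha = \alpha_I d\bar{z}^I$ and expanding both $\dbar_t P_t^q \alpha$ and $P_t^{q+1}(\dbar + \varphi_t \lrcorner \partial)\alpha$ via Leibniz, the inductive step reduces to the $q=0$ equation applied to each coefficient $\alpha_I$.

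For the $E$-valued case, work locally in a holomorphic frame $\{e_k\}$ of $E_t$ (which exists by Section 3.2 and satisfies $\bar{D}_t e_k = \dbar_{E_t} e_k = 0$). Writing $\alpha = \alpha^k \otimes e_k$ with $\alpha^k \in \Omega^{0,q}_X$, the Leibniz rules for the two operators give $\bar{D}_t \alpha = (\dbar + \varphi_t \lrcorner \partial)\alpha^k \otimes e_k$ and $\dbar_{E_t}(\mathcal{P}_t^q \alpha) = \dbar_t(P_t^q \alpha^k) \otimes e_k$. The scalar chain map applied componentwise then yields $\dbar_{E_t} \circ \mathcal{P}_t^q = \mathcal{P}_t^{q+1} \circ \bar{D}_t$ in this frame; since both sides are globally defined operators, the identity extends to all of $X$.

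The main technical obstacle is selecting the correct extension of $P_t$ to higher-degree forms. The multiplicative extension works because it coincides with the canonical projection determined by the $J_t$-decomposition $\Omega^q \otimes \mathbb{C} = \bigoplus_{p+p'=q} \Omega^{p,p'}_{X_t}$ restricted to $\Omega^{0,q}_X$: only the $\tilde{\omega} \wedge \cdots \wedge \tilde{\omega}$ terms contribute to the $(0,q)_{X_t}$-component of a wedge of $(0,1)_X$-forms. Once this choice is made, the key identity $P_t(d\bar{z}^i) = \dbar_t \bar{z}^i$ reduces the entire chain-map verification to mechanical applications of the Leibniz rule and the $q=0$ commutative diagram.
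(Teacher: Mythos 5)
Your overall strategy is the same as the paper's: extend $P_t$ to $(0,q)$-forms, verify the chain-map identity $\dbar_t\circ P_t=P_t\circ(\dbar+\varphi_t\lrcorner\partial)$ in the scalar case, and reduce the $E$-valued case to the scalar one by working in a local holomorphic frame of $E_t$ annihilated by $\bar{D}_t$. Two of your steps are genuine (minor) improvements in exposition: (i) you justify why the multiplicative extension $\bigwedge^qP_t$ agrees with the canonical projection onto $\Omega^{0,q}_{X_t}$, which the paper uses implicitly; and (ii) you obtain the vanishing $\dbar_t\bigl(P_t(d\bar{z}^i)\bigr)=0$ directly from $P_t(d\bar{z}^i)=\dbar_t\bar{z}^i$ and $\dbar_t^2=0$, whereas the paper proves the corresponding statement $\dbar_t\bar{\epsilon}^j=0$ by computing $d\bar{\epsilon}^j(\bar{v}_k,\bar{v}_l)$ and invoking $[\bar{v}_k,\bar{v}_l]=0$. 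Your route is cleaner at that point.

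However, there is one genuine gap: your base case. You cite ``the commutative diagram of Section 1'' for the identity $\dbar_tf=P_t\bigl((\dbar+\varphi_t\lrcorner\partial)f\bigr)$ on functions, but in the paper that diagram is only \emph{asserted} in the introduction with a forward reference to the proof of this very proposition --- it is not established anywhere else (Proposition \ref{prop:auxiliary_operator_manifold} only gives the weaker statement that the kernels coincide). So as written your induction rests on an unproved (and, within the paper's logic, circularly cited) base case, and that base case is precisely the computational heart of the paper's argument. To close the gap you need an actual proof that the $(0,1)_{X_t}$-component of $df$ equals $P_t\bigl(\dbar f+\varphi_t\lrcorner\partial f\bigr)$; equivalently, that $\partial f-\varphi_t\lrcorner\partial f=\frac{\partial f}{\partial z^{\alpha}}\bigl(dz^{\alpha}-\varphi_t\lrcorner dz^{\alpha}\bigr)$ lies in $\Omega^{1,0}_{X_t}$, so that it is killed by the projection onto $\Omega^{0,1}_{X_t}$. (The paper instead fixes a point $x$, chooses Newlander--Nirenberg coordinates $\zeta^j$ adapted to the frame $\bar{v}_j=\partial/\partial\bar{z}^j+\varphi^k_j\,\partial/\partial z^k$, and checks $P_t(d\bar{z}^j)=\bar{\epsilon}^j=d\bar{\zeta}^j$ at $x$.) Once that is supplied, the rest of your argument --- the induction, the identification of the two extensions of $P_t$, and the frame-wise reduction for general $E$ --- goes through and matches the paper.
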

\begin{proof}
We first prove the case when $E=\mathcal{O}_X$ and $q=0$. Let $P_t:\Omega^{0,1}_X\rightarrow\Omega^{0,1}_{X_t}$ be the restriction of the projection $\Omega^1_X\rightarrow\Omega^{0,1}_{X_t}$. Since $P_t$ is an isomorphism for $|t|$ small, it suffices to prove that
$$\dbar_tP_t=P_t(\dbar+\varphi_t\lrcorner\partial)$$
at every point $x\in X$. So let us fix $x\in X$ and let $\{z^j\}$ be local complex coordinates around $x$. Let $\bar{v}_j:=\pd{}{\bar{z}^j}+\varphi^k_j\pd{}{z^k}$ and $\bar{\epsilon}^j$ be its dual vector. Then Maurer--Cartan equation of $\varphi_t$ implies
$$[\bar{v}_j,\bar{v}_k]=0.$$
By the Newlander-Nirenberg theorem, we have complex coordinates $\{\zeta^j\}$ on $X_t$ such that
$$\pd{}{\bar{\zeta}^j}=\bar{v}^j\text{ and }d\bar{\zeta}^j=\bar{\epsilon}^j$$
at the point $x$. Then at $x$,
$$P_t(\dbar+\varphi\lrcorner\partial)f=\bigg(\pd{f}{\bar{z}^j}+\varphi_j^k\pd{f}{z^k}\bigg)P_t(d\bar{z}^j).$$
We need to show that $P_t(d\bar{z}^j)=\bar{\epsilon}^j$. We write
$$d\bar{z}^j=c^j_k\bar{\epsilon}^j+d_k^j\epsilon^k.$$
Then
$$c^j_k=d\bar{z}^j(\bar{v}_k)=d\bar{z}^j\bigg(\pd{}{\bar{z}^j}+\varphi_j^k\pd{}{z^k}\bigg)=\delta^i_k.$$
Hence $P(d\bar{z}^j)=\bar{\epsilon}^j$. Therefore,
$$P_t(\dbar+\varphi\lrcorner\partial)f=(\bar{v}_jf)\bar{\epsilon}^j=\pd{f}{\bar{\zeta}^j}d\bar{\zeta}^j=\dbar_tP_tf$$
at $x$. Since $x$ is arbitrary, the case $q=0$ is done.

For $q>0$. By abusing the notation, we still denote the induced projection $\Omega^{0,q}_X\rightarrow\Omega^{0,q}_{X_t}$ by $P_t$. Let $\alpha=\alpha_Jd\bar{z}^J$. Then at the point $x$,
$$P_t(\dbar+\varphi_t\lrcorner\partial)\alpha=P_t(\dbar+\varphi_t\lrcorner\partial)(\alpha_J)\wedge P_t(d\bar{z}^J)=\dbar_tP\alpha_J\wedge\bar{\epsilon}^J.$$
We need to show that $\dbar_t(\bar{\epsilon}^j)=0$ for all $j$. Since $\{\bar{\epsilon}^j\}$ is a local frame of $(T^{0,1}_{X_t})^*$, $d\bar{\epsilon}^j\in\Omega^{1,1}_{X_t}\oplus\Omega^{0,2}_{X_t}$. Hence, in order to prove $\dbar_t(\bar{\epsilon}^j)=0$, it suffices to show that $d\bar{\epsilon}^j(\bar{v}_k,\bar{v}_l)=0$ for all $k,l$. This follows from
$$d\bar{\epsilon}^j(\bar{v}_k,\bar{v}_l)=\bar{v}_k\bar{\epsilon}^j(\bar{v}_l)-\bar{v}_l\bar{\epsilon}^j(\bar{v}_k)-\bar{\epsilon}^j([\bar{v}_k,\bar{v}_l])=0.$$
This completes the case $E=\mathcal{O}_X$ and $q\geq 0$.

For a general holomorphic vector bundle $E$, $\bar{D}_t$ is locally given by
$$\bar{D}_t(\alpha^j\otimes e_j)=(\dbar+\varphi_t\lrcorner\partial)\alpha^j\otimes e_j,$$
where $\{e_j\}$ are local holomorphic frame of $E_t$ over $X_t$. In this case, $P_t$ is extended by
$$\alpha^j\otimes e_j\mapsto P_t(\alpha^j)\otimes e_j.$$
The required relation follows immediately from the $E = \mathcal{O}_X$ case.
\end{proof}

\subsection{DGLA and the Maurer--Cartan equation}

We are now ready to derive the Maurer--Cartan equation governing the deformations of pairs.
Given $A_t\in\Omega^{0,1}(\text{End}(E))$, $\varphi_t\in\Omega^{0,1}(T_X)$ such that the induced differential operator $\bar{D}_t$ satisfies $\bar{D}_t^2=0$, we have
$$(\dbar_E + \varphi_t\lrcorner\nabla+A_t)^2 = \bar{D}_t^2 = 0.$$

Let us expand the left hand side:
\begin{align*}
(\dbar_E + \varphi_t\lrcorner\nabla + A_t)^2
= & \dbar_E(\varphi_t\lrcorner\nabla) + \varphi_t\lrcorner\nabla\dbar_E + \varphi_t\lrcorner\nabla(\varphi_t\lrcorner\nabla)\\
  & \qquad + \dbar_EA_t + A_t\dbar_E + \varphi_t\lrcorner\nabla A_t + A_t(\varphi_t\lrcorner\nabla) + A_t\wedge A_t\\
= & \dbar_E(\varphi_t\lrcorner\nabla) + \varphi_t\lrcorner\nabla\dbar_E + \varphi_t\lrcorner\nabla(\varphi_t\lrcorner\nabla)\\
  & \qquad + \dbar_QA_t + \varphi_t\lrcorner\nabla^QA_t + A_t\wedge A_t.
\end{align*}
Applying Lemma \ref{lem:dbar_contract} to the term $\dbar_E(\varphi_t\lrcorner\nabla)$, we get
\begin{align*}
(\dbar_E + \varphi_t\lrcorner\nabla + A_t)^2
= & \dbar_{T_X}\varphi_t\lrcorner\nabla+\varphi_t\lrcorner(\dbar_E\nabla+\nabla\dbar_E) + \varphi_t\lrcorner\nabla(\varphi_t\lrcorner\nabla)\\
  & \qquad + \dbar_QA_t + \varphi_t\lrcorner\nabla^QA_t + A_t\wedge A_t.
\end{align*}
Since $\nabla$ is the Chern connection, we have $F_{\nabla}=\dbar_E\nabla+\nabla\dbar_E$, and so
$$(\dbar_E + \varphi_t\lrcorner\nabla + A_t)^2 = (\dbar_{T_X}\varphi_t\lrcorner\nabla + \varphi_t\lrcorner\nabla(\varphi_t\lrcorner\nabla)) + \dbar_QA_t + \varphi_t\lrcorner F_{\nabla} + \varphi_t\lrcorner\nabla^QA_t + A_t\wedge A_t.$$

Note that the curvature $F_{\nabla}$ is given by
$$F_{\nabla}(\varphi,\psi) = \varphi\lrcorner\nabla(\psi\lrcorner\nabla) - (-1)^{|\varphi||\psi|}\psi\lrcorner\nabla(\varphi\lrcorner\nabla) + [\varphi,\psi]\lrcorner\nabla$$
for $\varphi,\psi\in\Omega^{0,*}(T_{\mathbb{C}}X)$. Hence
$$2\varphi_t\lrcorner\nabla(\varphi_t\lrcorner\nabla) = F_{\nabla}(\varphi_t,\varphi_t) + [\varphi_t,\varphi_t]\lrcorner\nabla.$$
However, $\varphi_t\in\Omega^{0,1}(T_X)$ and $F_{\nabla}$ is of type-$(1,1)$, we must have $F_{\nabla}(\varphi_t,\varphi_t)=0$. Therefore,
$$\varphi_t\lrcorner\nabla(\varphi_t\lrcorner\nabla) = \frac{1}{2}[\varphi_t,\varphi_t]\lrcorner\nabla.$$
As a whole we obtain
$$(\dbar_E + \varphi_t\lrcorner\nabla + A_t)^2 = \left(\dbar_{T_X}\varphi_t + \frac{1}{2}[\varphi_t,\varphi_t]\right)\lrcorner\nabla + \dbar_QA_t + \varphi_t\lrcorner F_{\nabla} + \varphi_t\lrcorner\nabla^QA_t + \frac{1}{2}[A_t,A_t].$$

But since $X_t$ is integrable, $\varphi_t$ satisfies the Maurer--Cartan equation \eqref{eqn:MC_manifold}, and so
$$(\dbar_E + \varphi_t\lrcorner\nabla + A_t)^2 = \dbar_QA_t + \varphi_t\lrcorner F_{\nabla} + \varphi_t\lrcorner\nabla^Q A_t + \frac{1}{2}[A_t,A_t].$$
Hence we conclude that $\bar{D}_t^2 = 0$ is equivalent to the following two equations
\begin{equation}\label{eqn:MC_pair_prelim}
\left\{
\begin{split}
\dbar_QA_t + \varphi_t\lrcorner F_{\nabla} + \varphi_t\lrcorner\nabla^QA_t + \frac{1}{2}[A_t,A_t] &= 0,\\
\dbar_{T_X}\varphi_t + \frac{1}{2}[\varphi_t,\varphi_t] &= 0.
\end{split}
\right.
\end{equation}

Recall that $ A(E) = Q \oplus T_X$ as smooth vector bundle. Define a bracket $[-,-]:\Omega^{0,p}(A(E))\times\Omega^{0,q}( A(E))\rightarrow\Omega^{0,p+q}(A(E))$ by
$$[(A,\varphi),(B,\psi)] := (\varphi\lrcorner\nabla^QB - (-1)^{pq}\psi\lrcorner\nabla^QA + [A,B], [\varphi,\psi]).$$
The following proposition can be proven by straightforward, but tedious, computations which we omit:
\begin{proposition}\label{prop:bracket}
The bracket $[-,-]:\Omega^{0,p}(A(E))\times\Omega^{0,q}(A(E))\rightarrow\Omega^{0,p+q}(A(E))$ defined by
$$[(A,\varphi),(B,\psi)]:=(\varphi\lrcorner\nabla^QB-(-1)^{pq}\psi\lrcorner\nabla^QA+[A,B],[\varphi,\psi])$$
satisfies
\begin{enumerate}
\item[(1)]
$[(A,\varphi),(B,\psi)]=-(-1)^{pq}[(B,\psi),(A,\varphi)],$
\item[(2)]
$[(A,\varphi),[(B,\psi),(C,\tau)]]=[[(A,\varphi),(B,\psi)],(C,\tau)]+(-1)^{pq}[(B,\psi),[(A,\varphi),(C,\tau)]],$
\end{enumerate}
for $(A,\varphi)\in\Omega^{0,p}(A(E))$, $(B,\psi)\in\Omega^{0,q}(A(E))$ and $(C,\tau)\in\Omega^{0,r}(A(E))$.
\end{proposition}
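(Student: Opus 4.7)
The plan is to verify properties (1) and (2) by componentwise computation, exploiting the fact that $A(E) = Q \oplus T_X$ as a smooth bundle. The $T_X$-component of the bracket is the classical Schouten--Nijenhuis bracket on $\Omega^{0,\bullet}(T_X)$, which is known to form a graded Lie algebra, and the inner bracket $[A,B]$ on $\Omega^{0,\bullet}(Q)$ is the graded commutator of $\text{End}(E)$-valued forms, which likewise gives a graded Lie algebra. So everything nontrivial concerns how the mixed contraction terms $\varphi\lrcorner\nabla^Q B$ interact with these two pieces.

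For the graded antisymmetry (1), the $T_X$-component is the Schouten bracket, for which antisymmetry is standard. For the $Q$-component, I would simply write out $-(-1)^{pq}[(B,\psi),(A,\varphi)]$, use $[B,A] = -(-1)^{pq}[A,B]$ and the sign identity $(-1)^{pq}(-1)^{qp}=1$, and read off that the three terms match $\varphi\lrcorner\nabla^Q B - (-1)^{pq}\psi\lrcorner\nabla^Q A + [A,B]$.

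For the graded Jacobi identity (2), the $T_X$-component reduces to the Jacobi identity for the Schouten--Nijenhuis bracket. The $Q$-component expands into terms which I would group by shape: (a) pure triple-commutator terms $[A,[B,C]]$ etc., which cancel by the graded Jacobi identity for the commutator on $\Omega^{0,\bullet}(Q)$; (b) mixed terms such as $\varphi\lrcorner\nabla^Q[B,C]$ versus $[\varphi\lrcorner\nabla^Q B, C]$ and $[B,\varphi\lrcorner\nabla^Q C]$, which cancel by the graded Leibniz rule satisfied by $\nabla^Q$ on $\Omega^{0,\bullet}(Q)$; and (c) double-contraction terms of the form $\varphi\lrcorner\nabla^Q(\psi\lrcorner\nabla^Q C)$. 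For the last group the key identity I would invoke is
\begin{equation*}
\varphi\lrcorner\nabla^Q(\psi\lrcorner\nabla^Q C) - (-1)^{pq}\,\psi\lrcorner\nabla^Q(\varphi\lrcorner\nabla^Q C) = [\varphi,\psi]\lrcorner\nabla^Q C,
\end{equation*}
which is precisely the vanishing of $F_{\nabla^Q}(\varphi,\psi)$ acting on $C$; this holds because $\varphi,\psi\in\Omega^{0,\bullet}(T_X)$ contract with $\nabla^Q$ only in its $(1,0)$-part while $F_{\nabla^Q}$ is of type $(1,1)$, exactly as in the derivation of the Maurer--Cartan equation above.

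The main obstacle is not any single step but the sheer bookkeeping: each instance of $[(A,\varphi),[(B,\psi),(C,\tau)]]$ expands into seven terms in the $Q$-slot, and the three cyclic arrangements must be matched against one another with the correct Koszul signs. My approach would be to tabulate the terms by type (triple bracket, Leibniz-type, double-contraction, and $[\psi,\tau]\lrcorner\nabla^Q A$-type), show that within each type the contributions already cancel using only the three ingredients above (Jacobi for $\Omega^{0,\bullet}(Q)$, Leibniz for $\nabla^Q$, and the curvature-vanishing identity), and thereby conclude the graded Jacobi identity without writing out all the signs explicitly. As the statement warns, this is tedious but mechanical once the structural cancellations have been identified.
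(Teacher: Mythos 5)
Your proposal is correct, and it is exactly the ``straightforward, but tedious'' direct computation that the paper declares and then omits: the three structural ingredients you isolate --- the graded Jacobi identity for the commutator on $\Omega^{0,\bullet}(Q)$, the graded Leibniz rule of $\nabla^Q$ with respect to that commutator, and the curvature identity
\begin{equation*}
\varphi\lrcorner\nabla^Q(\psi\lrcorner\nabla^Q C) - (-1)^{pq}\,\psi\lrcorner\nabla^Q(\varphi\lrcorner\nabla^Q C) = [\varphi,\psi]\lrcorner\nabla^Q C,
\end{equation*}
valid because $F_{\nabla^Q}$ is of type $(1,1)$ and so vanishes when contracted twice against $(0,\bullet)$-forms valued in $T_X^{1,0}$ --- are precisely what make the seven-term expansions cancel, and they are the same mechanisms the paper itself uses when deriving the Maurer--Cartan equation \eqref{eqn:MC_pair_prelim}. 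Two small caveats: the curvature-generated terms $[\varphi,\psi]\lrcorner\nabla^Q C$ do not cancel within the ``double-contraction'' type but against the $[\psi,\tau]\lrcorner\nabla^Q A$-type terms coming from the outer bracket, so your four groups are not quite independent; and the bracket on $\Omega^{0,\bullet}(T_X)$ here is the Kodaira--Spencer (Fr\"olicher--Nijenhuis-type) bracket rather than the Schouten--Nijenhuis bracket, though only its graded Lie axioms are used. It is worth noting that the proof the paper actually supplies is different: in Appendix \ref{sec:compare_classical} it constructs an explicit isomorphism $\Phi$ from the algebraic DGLA on $\Omega^{0,\bullet}(D^1(E))$ (first-order differential operators with scalar symbol) to $\Omega^{0,\bullet}(A(E))$ and checks that $\Phi$ intertwines the two brackets; the graded antisymmetry and Jacobi identity are then inherited from the algebraic side for free. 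The transport-of-structure argument trades your sign bookkeeping for a single local computation with transition functions and, as a bonus, shows the resulting DGLA is independent of the hermitian metric; your direct verification has the virtue of staying entirely within the differential-geometric framework and making visible exactly where the type-$(1,1)$ property of the Chern curvature enters.
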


We also recall the differential operator $\dbar_{A(E)}$ defined in Section 2.

Again by direct computations, one can prove that the bracket $[-,-]$ and the Dolbeault operator $\dbar_{A(E)}$ are compatible with each other:
\begin{proposition}\label{prop:bracket_operator_compatible}
We have
$$\bar{\partial}_{A(E)}[(A,\varphi),(B,\psi)]
=[\bar{\partial}_{A(E)}(A,\varphi),(B,\psi)]+(-1)^p[(A,\varphi),\bar{\partial}_{A(E)}(B,\psi)]$$
for $(A,\varphi)\in\Omega^{0,p}(A(E))$ and $(B,\psi)\in\Omega^{0,\bullet}(A(E))$.
\end{proposition}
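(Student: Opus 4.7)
The plan is to verify the identity componentwise, exploiting the block form of $\bar{\partial}_{A(E)}$ and the decomposition $A(E) = Q \oplus T_X$ as smooth bundles. Projecting both sides onto the $T_X$-component and using that $\bar{\partial}_{A(E)}$ is upper triangular, everything reduces to the classical graded Leibniz rule $\bar{\partial}_{T_X}[\varphi,\psi] = [\bar{\partial}_{T_X}\varphi,\psi] + (-1)^p[\varphi,\bar{\partial}_{T_X}\psi]$, which expresses that $(\Omega^{0,\bullet}(T_X), \bar{\partial}_{T_X}, [\cdot,\cdot])$ is the Kodaira--Spencer DGLA governing deformations of $X$. This case is standard and does not interact with $Q$.

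For the $Q$-component, I would expand both sides according to the definition of the bracket together with the off-diagonal rule $B \wedge \varphi = -(-1)^{|\varphi|}\varphi\lrcorner F_{\nabla}$. The left-hand side splits into (i) ``interior'' contributions $\bar{\partial}_Q(\varphi\lrcorner \nabla^Q B)$, its antisymmetric counterpart in $\psi\lrcorner\nabla^Q A$, and $\bar{\partial}_Q[A,B]$; plus (ii) one ``anomalous'' term $-(-1)^{p+q}[\varphi,\psi]\lrcorner F_{\nabla}$ coming from the off-diagonal piece of $\bar{\partial}_{A(E)}$ applied to the $T_X$-component $[\varphi,\psi]$. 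The right-hand side produces the parallel expansion from each of the two brackets on the RHS, again with analogous interior terms plus two $F_{\nabla}$-contributions coming from $\bar{\partial}_{A(E)}(A,\varphi)$ and $\bar{\partial}_{A(E)}(B,\psi)$.

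The interior terms match by three standard inputs: an extension of Lemma \ref{lem:dbar_contract} to $Q$-valued $(1,q)$-forms, producing a graded Leibniz rule for $\bar{\partial}_Q \circ (\varphi \lrcorner (-))$; the super-commutation relation $\bar{\partial}_Q \partial^Q + \partial^Q \bar{\partial}_Q = F_{\nabla}^Q$ for the Chern connection (allowing $\nabla^Q$ to be swapped past $\bar{\partial}_Q$ at the cost of an $F_{\nabla}$-term); and the derivation property of $\nabla^Q$ with respect to the bracket on $Q = \mathrm{End}(E)$. The remaining $F_{\nabla}$-terms on both sides then reorganize, after Koszul-sign bookkeeping, into expressions that either cancel directly or combine to produce a piece proportional to $\varphi\lrcorner\psi\lrcorner \bar{\partial}_Q F_{\nabla}$, which vanishes by the Bianchi identity (Proposition \ref{prop:B_closed}). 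The main obstacle is precisely this sign-and-curvature bookkeeping: there is no new geometric ingredient beyond the Bianchi identity and the type-$(1,1)$ property of $F_{\nabla}$, but verifying that roughly a dozen terms reorganize correctly requires some care. For this reason, it is reasonable (as the authors do for Proposition \ref{prop:bracket}) to quote this as a straightforward but tedious direct check.
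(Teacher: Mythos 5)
Your proposal is correct in outline, but it takes a genuinely different route from the one the paper actually writes down. In the main text the authors simply assert Proposition \ref{prop:bracket_operator_compatible} "by direct computations, which we omit"; the proof they actually supply is the one in Appendix \ref{sec:compare_classical}, where the smooth identification $\Phi: D^1(E)\to A(E)$, $g_\alpha+d_\alpha\mapsto (g_\alpha-d_\alpha\lrcorner(\bar h_\alpha^{-1}\partial\bar h_\alpha),d_\alpha)$, is shown to be a holomorphic bundle isomorphism intertwining the algebraic bracket on $\Omega^{0,\bullet}(D^1(E))$ with $[-,-]_h$; the compatibility of $\dbar_{A(E)}$ with the bracket is then inherited from the known DGLA structure on the algebraic side. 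Your approach instead carries out the componentwise check directly, and your inventory of ingredients is the right one: the $T_X$-component is the Kodaira--Spencer Leibniz rule, and for the $Q$-component you need the extension of Lemma \ref{lem:dbar_contract}, the identity $\dbar_Q\partial^Q+\partial^Q\dbar_Q=[F_\nabla,-]$, the derivation property of $\nabla^Q$ over the commutator, and the vanishing of the double-contraction leftover by the Bianchi identity. One small correction there: since the double contraction $\varphi\lrcorner\psi\lrcorner$ picks out the part of $\nabla^QF_\nabla$ with two holomorphic differentials, the relevant vanishing is that of $\partial^QF_\nabla$ rather than $\dbar_QF_\nabla$ (Proposition \ref{prop:B_closed}); both follow from the full Bianchi identity $\nabla^QF_\nabla=0$ together with the type-$(1,1)$ property, so this does not affect the argument. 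The trade-off between the two routes is clear: your direct computation is self-contained and makes the role of the curvature and the Bianchi identity explicit, at the cost of the sign bookkeeping you acknowledge; the paper's appendix argument avoids that bookkeeping entirely and yields as a by-product the metric-independence of the DGLA, but it presupposes the algebraic DGLA structure on $D^1(E)$ from \cite{Martinengo_thesis}.
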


Propositions \ref{prop:bracket} and \ref{prop:bracket_operator_compatible} together say that $(\Omega^{0,\bullet}(A(E)),\dbar_{A(E)},[-,-])$ defines a differential graded Lie algebra (DGLA).

\begin{remark}
In the appendix, we will prove that there exists a natural isomorphism between the complex $(\Omega^{0,\bullet}(A(E)),\dbar_{A(E)})$ and the one obtained using algebraic methods \cite{Sernesi_book, Martinengo_thesis} intertwining our bracket $[-,-]$ with the algebraic one. This gives alternative proofs of Propositions \ref{prop:bracket} and \ref{prop:bracket_operator_compatible}, and shows that our DGLA is naturally isomorphic to the one derived using algebraic methods. In particular, the isomorphism class of our DGLA is independent of the choice of the hermitian metric we used to define the Chern connection $\nabla$.
\end{remark}

Using the bracket $[-,-]$ and the Dolbeault operator $\dbar_{A(E)}$, we can now rewrite the two equations \eqref{eqn:MC_pair_prelim}
as the following Maurer--Cartan equation:
\begin{align*}
\bar{\partial}_{A(E)}(A_t,\varphi_t) + \frac{1}{2}[(A_t,\varphi_t),(A_t,\varphi_t)] = 0,
\end{align*}
which governs the deformation of pairs.
We summarize our results by the following
\begin{theorem}\label{thm:MC_eqn}
Given a holomorphic pair $(X,E)$ and a smooth family of elements $\{(A_t,\varphi_t)\}_{t\in\Delta}\subset\Omega^{0,1}(A(E))$. Then $(A_t, \varphi_t)$ defines a holomorphic pair $(X_t, E_t)$ (namely, an integrable complex structure $J_t$ on $X$ together with a holomorphic bundle structure on $E$ over $(X,J_t)$)
if and only if the Maurer--Cartan equation
\begin{align}\label{eqn:MC_pair}
\bar{\partial}_{A(E)}(A_t,\varphi_t) + \frac{1}{2}[(A_t,\varphi_t),(A_t,\varphi_t)] = 0
\end{align}
is satisfied.
\end{theorem}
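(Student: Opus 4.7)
The plan is to observe that this theorem is a clean DGLA-theoretic repackaging of the explicit $\bar{D}_t^2$ calculation carried out just before its statement, combined with Theorem \ref{thm:Dbar_integrability}. I would therefore not introduce any new analytic machinery, but rather verify that the single Maurer--Cartan equation \eqref{eqn:MC_pair} unpacks, component by component along the smooth splitting $A(E) = Q \oplus T_X$, into the pair of equations \eqref{eqn:MC_pair_prelim}, and then invoke the results already established.

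First I would unpack $\dbar_{A(E)}(A_t,\varphi_t)$ using the definition from Section \ref{sec:prelim}: the $T_X$-component is simply $\dbar_{T_X}\varphi_t$, while the $Q$-component is $\dbar_Q A_t + B \wedge \varphi_t$. Recalling that $B \wedge \varphi_t = -(-1)^{|\varphi_t|}\varphi_t \lrcorner F_\nabla = \varphi_t \lrcorner F_\nabla$ since $|\varphi_t|=1$, this yields $\dbar_{A(E)}(A_t,\varphi_t) = (\dbar_Q A_t + \varphi_t \lrcorner F_\nabla,\ \dbar_{T_X}\varphi_t)$. Next, applying the bracket formula from Proposition \ref{prop:bracket} to $(A_t,\varphi_t)$ with itself (both of degree $p=q=1$, so $(-1)^{pq}=-1$) gives
\begin{equation*}
\tfrac{1}{2}[(A_t,\varphi_t),(A_t,\varphi_t)] = \bigl(\varphi_t\lrcorner\nabla^Q A_t + \tfrac{1}{2}[A_t,A_t],\ \tfrac{1}{2}[\varphi_t,\varphi_t]\bigr).
\end{equation*}
Adding the two, equation \eqref{eqn:MC_pair} becomes exactly the system \eqref{eqn:MC_pair_prelim}.

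Second, for the forward direction I would assume $(A_t,\varphi_t)$ defines a holomorphic pair $(X_t,E_t)$. Then by construction $\bar{D}_t = \dbar_E + \varphi_t\lrcorner\nabla + A_t$ is the operator associated to this pair, and holomorphicity forces $\bar{D}_t^2 = 0$. The explicit expansion of $(\dbar_E + \varphi_t\lrcorner\nabla + A_t)^2$ performed in the paragraphs just preceding the theorem --- using Lemma \ref{lem:dbar_contract}, the Chern connection identity $F_\nabla = \dbar_E\nabla + \nabla\dbar_E$, and the vanishing $F_\nabla(\varphi_t,\varphi_t) = 0$ coming from the $(1,1)$-type of $F_\nabla$ against a $(0,1)$-valued $\varphi_t$ --- shows that $\bar{D}_t^2 = 0$ is equivalent to the pair \eqref{eqn:MC_pair_prelim}. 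For the converse, if \eqref{eqn:MC_pair} holds then \eqref{eqn:MC_pair_prelim} holds by the previous paragraph, hence $\bar{D}_t^2 = 0$, and Theorem \ref{thm:Dbar_integrability} produces the desired holomorphic pair $(X_t,E_t)$.

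There is no serious obstacle: the analytic content has already been discharged in Theorem \ref{thm:Dbar_integrability} and in the $\bar{D}_t^2$ expansion, so the only real task is algebraic bookkeeping. The one place where care is needed is sign conventions --- specifically, making sure that the sign $-(-1)^{|\varphi|}$ in $B\wedge\varphi$ and the sign $-(-1)^{pq}$ in the bracket combine to reproduce \eqref{eqn:MC_pair_prelim} rather than some sign-twisted variant, and that the factor of $\tfrac{1}{2}$ correctly absorbs the doubling from the symmetric entries $\varphi_t\lrcorner\nabla^Q A_t$ in the self-bracket. Once the sign check is completed, the theorem is immediate.
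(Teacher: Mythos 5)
Your proposal is correct and follows essentially the same route as the paper: the paper likewise expands $\bar{D}_t^2=(\dbar_E+\varphi_t\lrcorner\nabla+A_t)^2$ via Lemma \ref{lem:dbar_contract}, the Chern identity, and the type argument to obtain the system \eqref{eqn:MC_pair_prelim}, then repackages it as the single equation \eqref{eqn:MC_pair} using $\dbar_{A(E)}$ and the bracket of Proposition \ref{prop:bracket}, with Theorem \ref{thm:Dbar_integrability} supplying the integrability. Your sign checks ($B\wedge\varphi_t=\varphi_t\lrcorner F_\nabla$ and the factor of $2$ from the symmetric self-bracket) are exactly the bookkeeping the paper performs.
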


\section{First order deformations}\label{sec:1st_order_def}

The Maurer--Cartan equation \eqref{eqn:MC_pair} implies that a first order deformation $(A_1,\varphi_1)$ (the linear term of the Taylor series expansion of a family $(A_t,\varphi_t)$) is $\dbar_{A(E)}$-closed:
$$\bar{\partial}_{A(E)}(A_1,\varphi_1) = 0,$$
and hence defines a cohomology class in the Dolbeault cohomology group $H^{0,1}_{\dbar_{A(E)}} \cong H^1(X, A(E))$. To determine the space of first order deformations of a holomorphic pair $(X,E)$, it remains to identify isomorphic deformations.

\begin{definition}
Two deformations $\mathcal{E}\rightarrow\mathcal{X}$,
$\mathcal{E}'\rightarrow\mathcal{X}'$ of $(X,E)$ are said to be {\em isomorphic} if there exists a biholomorphism $F:\mathcal{X}\rightarrow\mathcal{X}'$ and a holomorphic bundle isomorphism $\Phi:\mathcal{E}\rightarrow\mathcal{E}'$ covering $F$ such that $F|_X = \text{id}_X$ and $\Phi|_E = \text{id}_E$.
\end{definition}

\begin{proposition}\label{prop:1st_order_deform}
Suppose $\mathcal{E}\rightarrow\mathcal{X}$ and $\mathcal{E}'\rightarrow\mathcal{X}'$ are isomorphic 1-real parameter family of deformations of $(X,E)$. If we denote by $(A_t,\varphi_t)$ and $(A'_t,\varphi'_t)$ the elements that represent the families $\mathcal{E}\rightarrow\mathcal{X}$ and $\mathcal{E}'\rightarrow\mathcal{X}'$ respectively, then there exists $(\Theta_1,v)\in\Omega^0(A(E))$ such that
$$(A'_t,\varphi'_t) = (A_t,\varphi_t) + t\dbar_{A(E)}(-\Theta_1,v) + R((A_t,\varphi_t),t(\Theta_1,v)),$$
where the error $R$ depends smoothly on $t,A(t),\varphi(t),\Theta_1,v$ and first partial derivatives of $\Theta,v$. Moreover, $R$ is of order $s^2$ in the sense that
$$R(s(A,\varphi),s(\Theta,v))=s^2R_1((A,\varphi),(\Theta,v),s),$$
for some map $R_1$ which depends smoothly (with respect to the Sobolev norm; see Section \ref{sec:completeness} for its precise definition) in $s,(A,\varphi)\in\Omega^{0,1}(A(E))$ and $(\Theta,v)\in\Omega^0(A(E))$.
\end{proposition}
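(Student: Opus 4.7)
The plan is to translate the geometric isomorphism $(F,\Phi)$ into an infinitesimal gauge transformation of the Maurer--Cartan element $(A_t,\varphi_t)$, and then read off the first-order term in $t$.

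I would first trivialise both families as at the start of Section \ref{sec:MC_eqn}, so that the isomorphism is represented by a smooth family of diffeomorphisms $F_t:X\to X$ with $F_0=\text{id}_X$, together with a smooth family of bundle maps $\Phi_t:E\to E$ covering $F_t$ with $\Phi_0=\text{id}_E$. Taylor-expanding at $t=0$ gives $F_t=\text{id}_X+tv+O(t^2)$ for some smooth complex vector field $v$ on $X$; the $(0,1)$-part of $v$ can be absorbed into the error term (since $\varphi_0=0$ makes its first-order effect trivial), so $v$ may be taken in $\Omega^0(T_X)$. To isolate a fibre-wise generator, I would use the Chern connection $\nabla$ to parallel-transport $\Phi_t(s)(F_t(x))\in E_{F_t(x)}$ back to $E_x$ along the short path $u\mapsto F_{ut}(x)$; composing with $\Phi_t$ yields a smooth family of genuine endomorphisms $\widetilde\Phi_t$ of $E$ with $\widetilde\Phi_0=\text{id}_E$, and I set $\Theta_1:=\frac{d}{dt}\big|_{t=0}\widetilde\Phi_t\in\Omega^0(\text{End}(E))$.

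Next, since $(F_t,\Phi_t)$ is a holomorphic isomorphism of the pairs, Theorem \ref{thm:Dbar_integrability} gives a conjugation relation between the operators $\bar D_t=\dbar_E+\varphi_t\lrcorner\nabla+A_t$ and $\bar D'_t=\dbar_E+\varphi'_t\lrcorner\nabla+A'_t$ induced by pushing sections forward under $(F_t,\Phi_t)$. Expanding this relation in $t$ and reading off the linear part in the $T_X$ component recovers the classical Kodaira--Spencer identity $\varphi'_t-\varphi_t=t\,\dbar_{T_X}v+O(t^2)$. The linear part in the $\text{End}(E)$ component has two contributions: the pure fibre-gauge piece gives $-t\,\dbar_Q\Theta_1$, while the piece arising because $\Phi_t$ covers a nontrivial $F_t$ produces an additional $-t\,v\lrcorner F_\nabla$ (the curvature enters here precisely because parallel transport along $v$ is what we used to define $\Theta_1$). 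Recalling from Section \ref{sec:prelim} that $\dbar_{A(E)}(-\Theta_1,v)=(-\dbar_Q\Theta_1-v\lrcorner F_\nabla,\,\dbar_{T_X}v)$, this gives exactly the claimed first-order identity. All quadratic or higher terms in $t$ or in the small quantities $A,\varphi,\Theta_1,v$ are collected into $R((A_t,\varphi_t),t(\Theta_1,v))$; since $R$ is polynomial in its arguments and their first partials, and each monomial contains at least two such factors, the homogeneity $R(s(A,\varphi),s(\Theta,v))=s^2R_1((A,\varphi),(\Theta,v),s)$ is automatic with $R_1$ smooth (hence continuous in Sobolev norms) in all variables.

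The main obstacle will be the precise identification of the off-diagonal term $-v\lrcorner F_\nabla$. In the pure bundle-gauge case ($F_t=\text{id}_X$) this term is absent, so getting the differential geometry right depends on a careful choice of comparison scheme---here, parallel transport via the Chern connection---and on checking that the resulting curvature correction exactly matches the off-diagonal $B$-entry of $\dbar_{A(E)}$. Once this matching, together with the correct sign on $\Theta_1$, is verified, the remainder of the proof is a routine Taylor expansion and book-keeping of error terms.
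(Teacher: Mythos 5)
Your proposal follows essentially the same route as the paper's proof: extracting the vector field $v$ from $F_t$ to get the Kodaira--Spencer term, using parallel transport along the Chern connection to turn $\Phi_t$ into a genuine endomorphism family whose first-order term is $\Theta_1$, and differentiating the holomorphicity relation $\bar{D}'_t\Phi_t(e_t)=0$ at $t=0$ to produce the curvature correction $-v\lrcorner F_\nabla$ via $\dbar_E\nabla e = F_\nabla(e)$. The obstacle you flag is exactly the computation the paper carries out, and your outline of how to resolve it is correct.
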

\begin{proof}
As before, let $v\in\Omega^0(T_X)$ be the vector field which generates the 1-parameter family of diffeomorphisms $F_t:X\rightarrow X$ of the underlying smooth manifold $X$. Since
$$dF_t(\text{Graph}(\varphi_t:T^{0,1}_X\rightarrow T^{1,0}_X)) = \text{Graph}(\varphi'_t:T^{0,1}_X\rightarrow T^{1,0}_X),$$
we already have
$$\varphi'_t=\varphi_t+t\dbar_{T_X}v+R(\varphi_t,tv).$$
Hence it remains to show that
$$A'_t=A_t+t(\dbar_Q(-\Theta_1)-v\lrcorner F_{\nabla})+R((A_t,\varphi_t),t(\Theta_1,v)),$$
for some $\Theta_1\in\Omega^0(Q)$.

We define an endomorphism of $E$ as follows: Fix $p\in X$ and the fiber $E_p$ of $E$. Let $P_{\gamma_p(t)}:E_p\rightarrow E_{F_{t}(p)}$ be the parallel transport along $t\longmapsto\gamma_p(t):=F_{t}(p)$. Define $\Theta_t:=P_{\gamma_p(t)}^{-1}\Phi_{t}:E_p\rightarrow E_p$. Then $\Theta_t$ defines a bundle endomorphism of $E$.
Let us write
$$\Theta_t=\Theta_0+t\Theta_1+O(t^2),$$
$$A'_t=A'_0+tA'_1+O(t^2),$$
Since $\Phi_{0}=\text{id}_E$, we have $\Theta_0=\text{id}_E$ and $A'_0=0$. We need to compute $A'_1$.

Now let $e_t$ be a local holomorphic section of $E_t\subset\mathcal{E}$. Since $\Phi_{t}$ is holomorphic, $\Phi_{t}(e_t)$ is a holomorphic section of $E'_t\subset\mathcal{E}'$, so that $\bar{D}'_t\Phi_{t}(e_t)=0$, i.e.
$$\bar{\partial}_E\Phi_{t}(e_t)=-\varphi'(t)\lrcorner\nabla\Phi_{t}(e_t)-A'_t\Phi_{t}(e_t).$$
We want to compute the first derivatives of both sides of this equation with respective to $t$ at $t=0$.

First note that $\Phi_{t}=P_{\gamma(t)}\Theta_t$, so we have
$$\pd{}{t}\Phi_{t}(e_t)|_{t=0}=-v\lrcorner\nabla e+\Theta_1(e)+\pd{}{t}e_t|_{t=0},$$
where $e=e_0$. Hence
\begin{align*}
\pd{}{t}\bar{\partial}_E\Phi_{t}(e_t)|_{t=0}
& = \dbar_E\left(\pd{}{t}\Phi_{t}(e_t)|_{t=0}\right)\\
& = -\dbar_E(v\lrcorner\nabla e)+\dbar_E(\Theta_1(e))+\dbar_E\left(\pd{}{t}e_t|_{t=0}\right).
\end{align*}

For the term, $-\varphi'_t\lrcorner\nabla\Phi_{t}(e_t)$, we have
\begin{align*}
\pd{}{t}(-\varphi'_t\lrcorner\nabla\Phi_{t}(e_t))|_{t=0}
& = -\varphi'_1\lrcorner\nabla e-\dbar_{T_X}v\lrcorner\nabla e\\
& = -\varphi'_1\lrcorner\nabla e-\dbar_E(v\lrcorner\nabla e)-v\lrcorner\dbar_E\nabla e.
\end{align*}
Since $e=e_0$ is holomorphic with respective to $\mathcal{E}|_{\pi^{-1}(0)}$, we have
$$\dbar_E\nabla e=F_{\nabla}(e).$$
Moreover, since $e_t$ is holomorphic with respective to $E_t$, we have $\bar{D}_te_t=0$, that is,
$$\dbar_Ee_t=-\varphi_t\lrcorner\nabla e_t-A_te_t.$$
Differentiate with respective to $t$ and set $t=0$, we obtain
$$-\varphi_1\lrcorner\nabla e=\dbar_E\left(\pd{}{t}e(t)|_{t=0}\right)+A_1e.$$
Hence
$$\pd{}{t}(-\varphi'_t\lrcorner\nabla\Phi_{t}(e_t))|_{t=0}=\dbar_E\left(\pd{}{t}e_t|_{t=0}\right)-\dbar_E(v\lrcorner\nabla e)-v\lrcorner F_{\nabla}e+A_1e.$$

For the term $-A'_t\Phi_{t}(e_t)$, we have
$$\pd{}{t}(-A'_t\Phi_{t}(e_t))|_{t=0}=-A'_1e.$$

As a whole we obtain the formula
$$\dbar_E(\Theta_1(e))=-v\lrcorner F_{\nabla}(e)+(A_1-A'_1)(e).$$
Since $e$ is holomorphic with respective to $\mathcal{E}|_{\pi^{-1}(0)}$, $\dbar_E(\Theta_1(e))=(\dbar_Q\Theta_1)(e)$, so that
$$A'_1=A_1+\dbar_Q(-\Theta_1)-v\lrcorner F_{\nabla}.$$
Therefore we have
$$\pd{}{t}(A'_t-A_t)|_{t=0}=A'_1-A_1=\dbar_Q(-\Theta_1)-v\lrcorner F_{\nabla},$$
or in other words,
$$A'_t=A_t+t(\dbar_Q(-\Theta_1)-v\lrcorner F_{\nabla})+O(t^2).$$

Since $A'_t$ is completely determined by $(A_t,\varphi_t)$ and $t(\Theta_1,v)$, we have
$$A'_t=A_t+t(\dbar_Q(-\Theta_1)-v\lrcorner F_{\nabla})+R((A_t,\varphi_t),t(\Theta_1,v)),$$
where $R$ is of order $t^2$ and depends smoothly on $t,A_t,\varphi_t,\Theta_1,v$. Moreover, since the equation
$$\bar{\partial}_E\Phi_{t}(e_t)=-\varphi'_t\lrcorner\nabla\Phi_{t}(e_t)-A'_t\Phi_{t}(e_t)$$
depends smoothly on first order partial derivatives of $\Theta_1$ and $v$, we see that the error $R$ also depends smoothly on first order partial derivatives of $\Theta_1$ and $v$.

Finally, $R$ satisfies
$$R(s(A,\varphi),st(\Theta,v))=s^2R_1((A,\varphi),(\Theta_1,v),s),$$
for some map $R_1$ which depends smoothly in $s,(A,\varphi)\in\Omega^{0,1}(A(E))$ and $(\Theta,v)\in\Omega^0(A(E))$. This follows from the fact that
\begin{align*}
R((A,\varphi),(\Theta,v)) & \to 0 \quad \text{as $(\Theta,v)\to 0$, and}\\
R((A,\varphi),(\Theta,v)) & \to R((\Theta,v)) \quad \text{as $(A,\varphi)\to 0$},
\end{align*}
with $R(s(\Theta,v))=s^2R((\Theta,v))$.
\end{proof}

\begin{corollary}
If $\mathcal{E}\rightarrow\mathcal{X}$ and $\mathcal{E}'\rightarrow\mathcal{X}'$ are isomorphic deformations of $(X,E)$, then the first order terms $(A_1,\varphi_1)$ and $(A'_1,\varphi'_1)$ of the corresponding families $(A_t,\varphi_t)$ and $(A'_t,\varphi'_t)$ respectively differ by an $\dbar_{A(E)}$-exact form.
\end{corollary}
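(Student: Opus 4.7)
The plan is to derive this essentially as a direct reading of Proposition \ref{prop:1st_order_deform}, since the hard work of exhibiting the cochain homotopy has already been done there. Concretely, I would start from the expansion
$$(A'_t,\varphi'_t) = (A_t,\varphi_t) + t\,\dbar_{A(E)}(-\Theta_1,v) + R((A_t,\varphi_t),t(\Theta_1,v))$$
provided by the proposition, and compare coefficients of $t$ on both sides. Since $(A_t,\varphi_t) = t(A_1,\varphi_1) + O(t^2)$ and similarly for $(A'_t,\varphi'_t)$, and since the asserted quadratic bound
$$R(s(A,\varphi),s(\Theta,v)) = s^2 R_1((A,\varphi),(\Theta,v),s)$$
implies that $R$ contributes nothing to the linear part, equating the $t$-linear terms immediately gives
$$(A'_1,\varphi'_1) - (A_1,\varphi_1) = \dbar_{A(E)}(-\Theta_1,v),$$
which is the desired $\dbar_{A(E)}$-exactness.

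If the corollary is read in the more general multi-parameter setting (with $\Delta\subset\mathbb{C}^d$), I would reduce to the one-real-parameter case by restricting both isomorphic families along an arbitrary real line through $0\in\Delta$. The restricted families are again isomorphic deformations (via the restriction of $F$ and $\Phi$), so Proposition \ref{prop:1st_order_deform} applies and yields exactness of the directional first order term along that line. Since the first order term $(A_1,\varphi_1)\in\Omega^{0,1}(A(E))$ is determined by its pairing against tangent directions at $0$, performing this argument directionwise (and using linearity of $\dbar_{A(E)}$) recovers exactness globally; alternatively, one can rerun the argument of Proposition \ref{prop:1st_order_deform} verbatim, replacing the single generating vector field $v$ by the vector field implementing the biholomorphism $F$ along the chosen real tangent direction.

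The main obstacle, such as it is, is purely notational: one has to be careful that the $(\Theta_1,v)\in\Omega^0(A(E))$ extracted from the proposition does not secretly depend on $t$ in a way that spoils the reading off of the $t$-linear coefficient. This is guaranteed by the proposition's assertion that $R$ depends smoothly on $t,(A_t,\varphi_t),(\Theta_1,v)$ together with the explicit factorization $R = t^2 R_1$, so the identification of $\dbar_{A(E)}(-\Theta_1,v)$ as the exact $1$-cocycle witnessing the equivalence is genuine and independent of higher order data. Beyond this bookkeeping, no further computation is required.
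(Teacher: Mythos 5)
Your proposal is correct and follows essentially the same route as the paper: the paper's own proof simply reads off the $t$-linear terms from Proposition \ref{prop:1st_order_deform}, noting that $A'_1-A_1=\dbar_Q(-\Theta_1)-v\lrcorner F_{\nabla}$ and $\varphi'_1-\varphi_1=\dbar_{T_X}v$ assemble into $\dbar_{A(E)}(-\Theta_1,v)$ by the definition of the Atiyah extension operator. Your additional remarks on the vanishing of the linear contribution of $R$ and on the multi-parameter reduction are sound elaborations of the same argument.
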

\begin{proof}
We have $A'_1-A_1 = \dbar_Q(-\Theta_1)-v\lrcorner F_{\nabla}$ and $\varphi'_1-\varphi_1 = \dbar_{T_X}v$, whence $$(A'_1,\varphi'_1)-(A_1,\varphi_1)=\dbar_{A(E)}(-\Theta_1,v).$$
\end{proof}

We conclude that the space of first order deformations of a holomorphic pair $(X,E)$ is precisely given by the Dolbeault cohomology group $H^{0,1}_{\dbar_{A(E)}} \cong H^1(X, A(E))$

\section{Obstructions and Kuranishi family}\label{sec:obs_kuranishi}

Now given a first order deformation $[(A_1,\varphi_1)] \in H^{0,1}_{\dbar_{A(E)}} \cong H^1(X, A(E))$, it is standard in deformation theory to ask whether one can find a family $(A_t,\varphi_t)$ integrating $(A_1,\varphi_1)$ to give an actual family of deformations. To study this problem, we use a method due to Kuranishi \cite{Kuranishi65}.

We need to review several operators commonly used in Hodge theory. We first choose a hermitian metric $g$ on $X$ and $h$ on $A(E)$, so that we can define a hermitian product $(\cdot,\cdot)$ on $\Omega^{0,\bullet}( A(E))$. Define the formal adjoint of $\dbar_{ A(E)}$ with respective to $(\cdot,\cdot)$ by
$$\left(\dbar_{A(E)}\alpha,\beta\right) = \left(\alpha,\dbar_{A(E)}^*\beta\right).$$
Then the {\em Laplacian} is defined by
$$\Delta_{ A(E)}:=\dbar_{ A(E)}\dbar_{ A(E)}^*+\dbar_{ A(E)}^*\dbar_{ A(E)}.$$
This is an elliptic self-adjoint operator and thus has a finite dimensional kernel $\mathbb{H}^p(X, A(E))$, consisting of {\em harmonic forms}. We have the standard isomorphism from Hodge theory:
$$H^p(X, A(E)) \cong H^{0,p}_{\dbar_{ A(E)}} \cong \mathbb{H}^p(X, A(E)).$$

Take a completion of $\Omega^{0,\bullet}( A(E))$ with respective to $(\cdot,\cdot)$ to get a Hilbert space $L^*$, and let $H:L^*\rightarrow\mathbb{H}^*(X, A(E))$ be the harmonic projection. The Green's operator $G:L^*\rightarrow L^*$ is defined by
$$I=H+\Delta_{ A(E)}G=H+G\Delta_{ A(E)}.$$
It commutes with $\dbar_{ A(E)}$ and $\dbar_{ A(E)}^*$.

Now let $\eta_1,\dots,\eta_m\in \mathbb{H}^1(X, A(E))$ be a basis and $\epsilon_1(t):=\sum_{j=1}^{m}t_j\eta_j\in\mathbb{H}^1(X, A(E))$. Consider the equation
$$\epsilon(t)=\epsilon_1(t)-\frac{1}{2}\dbar_{ A(E)}^*G[\epsilon(t),\epsilon(t)].$$
We denote the H\"older norm by $\norm{\cdot}_{k,\alpha}$. The following estimates are obvious:
\begin{align*}
\norm{\dbar_{ A(E)}^*\epsilon}_{k,\alpha} \leq & C_1\norm{\epsilon}_{k+1,\alpha}\\
\norm{[\epsilon,\delta]}_{k,\alpha} \leq & C_2\norm{\epsilon}_{k+1,\alpha}\norm{\delta}_{k+1,\alpha}.
\end{align*}
In \cite{Douglis-Nirenberg55}, Douglis and Nirenberg proved the following nontrivial a priori estimate:
$$\norm{\epsilon}_{k,\alpha} \leq C_3(\norm{\Delta_{ A(E)}\epsilon}_{k-2,\alpha}+\norm{\epsilon}_{0,\alpha}).$$
Applying these and following the proof of \cite[Chapter 4, Proposition 2.3]{Morrow-Kodaira_book}, one can deduce an estimate for the Green's operator $G$:
$$\norm{G\epsilon}_{k,\alpha}\leq C_4\norm{\epsilon}_{k-2,\alpha},$$
where all $C_i$'s are positive constants which depend only on $k$ and $\alpha$.

Then by the same argument as in \cite[Chapter 4, Proposition 2.4]{Morrow-Kodaira_book}, or alternatively using an implicit function theorem for Banach spaces \cite{Kuranishi65}, we obtain a unique solution $\epsilon(t)$ which satisfies the equation
$$\epsilon(t)=\epsilon_1(t)-\frac{1}{2}\dbar_{ A(E)}^*G[\epsilon(t),\epsilon(t)],$$
and is analytic in the variable $t$.
Note that the solution $\epsilon(t)$ is always smooth. Indeed, by applying the Laplacian to the above equation, we get
$$\Delta_{ A(E)}\epsilon(t)+\frac{1}{2}\dbar_{ A(E)}^*[\epsilon(t),\epsilon(t)]=0.$$
Also, the solution $\epsilon(t)$ is holomorphic in $t$, so we have
$$\dpsum{j}{}\secpd{\epsilon(t)}{t_j}{\bar{t}_j}=0.$$
Now since the operator
$$\Delta_{ A(E)}+\dpsum{j}{}\secpd{}{t_j}{\bar{t}_j}$$
is elliptic, we see that $\epsilon(t)$ is smooth by elliptic regularity.

Following Kuranishi \cite{Kuranishi65} (see also \cite[Chapter 4]{Morrow-Kodaira_book}), we have the following
\begin{proposition}
The solution $\epsilon(t)$ that satisfies
$$\epsilon(t)=\epsilon_1(t)-\frac{1}{2}\dbar_{ A(E)}^*G[\epsilon(t),\epsilon(t)]$$
solves the Maurer--Cartan equation if and only if $H[\epsilon(t),\epsilon(t)]=0$, where $H$ is the harmonic projection.
\end{proposition}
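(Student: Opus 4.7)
My plan is a Kuranishi-style bootstrap argument. Let me write $\Phi(t):=\dbar_{A(E)}\epsilon(t)+\tfrac{1}{2}[\epsilon(t),\epsilon(t)]$; the claim is that $\Phi(t)=0 \Longleftrightarrow H[\epsilon(t),\epsilon(t)]=0$. One direction is essentially free: since $\dbar_{A(E)}\epsilon(t)$ is $\dbar_{A(E)}$-exact, it is orthogonal to the harmonic forms, so $H\Phi(t)=\tfrac{1}{2}H[\epsilon(t),\epsilon(t)]$. Hence $\Phi(t)=0$ immediately yields $H[\epsilon(t),\epsilon(t)]=0$.

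For the converse, the idea is to first establish two identities satisfied by $\Phi(t)$ independently of the Maurer--Cartan equation:
\begin{align*}
\text{(a)} \quad & \dbar_{A(E)}^{*}\Phi(t) = 0, \\
\text{(b)} \quad & \dbar_{A(E)}\Phi(t) = [\Phi(t),\epsilon(t)].
\end{align*}
For (a), I would first apply $\dbar_{A(E)}^{*}$ to the defining equation for $\epsilon(t)$, using $\dbar_{A(E)}^{*}\epsilon_{1}(t)=0$ (harmonicity of $\epsilon_1$) and $(\dbar_{A(E)}^{*})^{2}=0$, to deduce $\dbar_{A(E)}^{*}\epsilon(t)=0$. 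Next, apply $\Delta_{A(E)}$ to the defining equation and use $\Delta_{A(E)}\dbar_{A(E)}^{*}G=\dbar_{A(E)}^{*}(I-H)=\dbar_{A(E)}^{*}$ together with $\Delta_{A(E)}\epsilon_{1}=0$ to obtain $\dbar_{A(E)}^{*}\dbar_{A(E)}\epsilon(t)=\Delta_{A(E)}\epsilon(t)=-\tfrac{1}{2}\dbar_{A(E)}^{*}[\epsilon(t),\epsilon(t)]$; the two terms in $\dbar_{A(E)}^{*}\Phi(t)$ then cancel. For (b), I would invoke the graded Leibniz rule of Proposition \ref{prop:bracket_operator_compatible}, which for $\epsilon(t)$ of degree one specializes to $\dbar_{A(E)}[\epsilon,\epsilon]=2[\dbar_{A(E)}\epsilon,\epsilon]$, together with the graded Jacobi identity of Proposition \ref{prop:bracket}, which forces $[[\epsilon,\epsilon],\epsilon]=0$ on a degree-one element; thus $\dbar_{A(E)}\Phi=[\dbar_{A(E)}\epsilon,\epsilon]=[\Phi-\tfrac{1}{2}[\epsilon,\epsilon],\epsilon]=[\Phi,\epsilon]$.

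With (a) in hand, the Hodge decomposition $I=H+\dbar_{A(E)}\dbar_{A(E)}^{*}G+\dbar_{A(E)}^{*}\dbar_{A(E)}G$ applied to $\Phi(t)$ kills the middle term (since $\dbar_{A(E)}^{*}G\Phi=G\dbar_{A(E)}^{*}\Phi=0$), and using (b) one gets
$$\Phi(t)=H\Phi(t)+\dbar_{A(E)}^{*}G\,\dbar_{A(E)}\Phi(t)=H\Phi(t)+\dbar_{A(E)}^{*}G[\Phi(t),\epsilon(t)].$$
If $H[\epsilon(t),\epsilon(t)]=0$, then $H\Phi(t)=0$, so $\Phi(t)=\dbar_{A(E)}^{*}G[\Phi(t),\epsilon(t)]$. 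Combining the three a priori estimates stated just before this proposition (for $\dbar_{A(E)}^{*}$, $G$, and $[-,-]$) yields
$$\|\Phi(t)\|_{k,\alpha}\leq C_{1}C_{2}C_{4}\,\|\Phi(t)\|_{k,\alpha}\,\|\epsilon(t)\|_{k,\alpha}.$$
Since $\epsilon(t)\to 0$ as $t\to 0$, shrinking $\Delta$ so that $C_{1}C_{2}C_{4}\|\epsilon(t)\|_{k,\alpha}<1$ forces $\Phi(t)=0$, as required.

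The main obstacle I anticipate is identity (b): the graded signs of the DGLA must be tracked carefully to obtain both $\dbar_{A(E)}[\epsilon,\epsilon]=2[\dbar_{A(E)}\epsilon,\epsilon]$ and the vanishing of the ternary expression $[[\epsilon,\epsilon],\epsilon]$ on a degree-one element (the latter combines the graded symmetry with graded Jacobi). Once (b) is secured, the rest is routine Kuranishi bootstrap via Hodge decomposition and the estimates on $\dbar_{A(E)}^{*}$, $G$, and $[-,-]$.
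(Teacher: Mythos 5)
Your proposal is correct and follows essentially the same route as the paper: both arguments reduce to the identity $\Phi(t)=\dbar_{A(E)}^*G[\Phi(t),\epsilon(t)]$ (obtained via the fixed-point equation, the Hodge decomposition, and the graded Jacobi identity killing $[[\epsilon,\epsilon],\epsilon]$) and then close with the contraction estimate $\|\Phi\|_{k,\alpha}\leq C\|\Phi\|_{k,\alpha}\|\epsilon\|_{k,\alpha}$ for small $t$. The only cosmetic difference is that you apply the Hodge decomposition directly to $\Phi(t)$ after first checking $\dbar_{A(E)}^*\Phi(t)=0$, whereas the paper decomposes $[\epsilon(t),\epsilon(t)]$; the substance is identical.
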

\begin{proof}
Suppose the Maurer--Cartan equation holds. Then
$$H[\epsilon(t),\epsilon(t)]=2H\dbar_{ A(E)}\epsilon(t)=0.$$

Conversely, suppose that $H[\epsilon(t),\epsilon(t)]=0$. We must show that
$$\delta(t):=\dbar_{ A(E)}\epsilon(t)+\frac{1}{2}[\epsilon(t),\epsilon(t)]=0.$$
Recall that $\epsilon(t)$ is a solution to
$$\epsilon(t)=\epsilon_1(t)-\frac{1}{2}\dbar_{ A(E)}^*G[\epsilon(t),\epsilon(t)]$$
and $\epsilon_1(t)$ is $\dbar_{ A(E)}$-closed. By applying $\dbar_{ A(E)}$ to this equation, we get
$$\dbar_{ A(E)}\epsilon(t)=-\frac{1}{2}\dbar_{ A(E)}\dbar_{ A(E)}^*G[\epsilon(t),\epsilon(t)].$$
Hence
$$2\delta(t)=\dbar_{ A(E)}\dbar_{ A(E)}^*G[\epsilon(t),\epsilon(t)]-[\epsilon(t),\epsilon(t)].$$
Using the Hodge decomposition on forms, we can write
$$[\epsilon(t),\epsilon(t)]=H[\epsilon(t),\epsilon(t)]+\Delta_{ A(E)}G[\epsilon(t),\epsilon(t)]
=\Delta_{ A(E)}G[\epsilon(t),\epsilon(t)].$$
Therefore, we have
\begin{align*}
2\delta(t)
= (\Delta_{ A(E)}G-\dbar_{ A(E)}\dbar_{ A(E)}^*G)[\epsilon(t),\epsilon(t)]
= \dbar_{ A(E)}^*\dbar_{ A(E)}G[\epsilon(t),\epsilon(t)]
= 2\dbar_{ A(E)}^*G[\dbar_{ A(E)}\epsilon(t),\epsilon(t)],
\end{align*}
and hence
\begin{align*}
\delta(t)
= \dbar_{ A(E)}^*G[\dbar_{ A(E)}\epsilon(t),\epsilon(t)]
= \dbar_{ A(E)}^*G[\delta(t)-\frac{1}{2}[\epsilon(t),\epsilon(t)],\epsilon(t)]
= \dbar_{ A(E)}^*G[\delta(t),\epsilon(t)],
\end{align*}
where we have used the Jacobi identity in the last equality. Using the estimate
$$\norm{[\xi,\eta]}_{k,\alpha}\leq C_{k,\alpha}\norm{\xi}_{k+1,\alpha}\norm{\eta}_{k+1,\alpha},$$
we get
$$\norm{\delta(t)}_{k,\alpha}\leq C_{k,\alpha}\norm{\delta(t)}_{k,\alpha}\norm{\epsilon(t)}_{k,\alpha}.$$
By choosing $|t|$ to be small enough so that $C_{k,\alpha}\norm{\epsilon(t)}_{k,\alpha}<1$, we must have $\delta(t)=0$ for all $|t|$ small enough. This finishes the proof.
\end{proof}

In the case when $H[\epsilon(t),\epsilon(t)]$ vanishes identically (which always holds if $H^2(X, A(E))=0$), we have the following
\begin{corollary}
If $H[\epsilon(t),\epsilon(t)]=0$ for all $t$, then we have a complex analytic family $\mathcal{E}\rightarrow\mathcal{X}$.
\end{corollary}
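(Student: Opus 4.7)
The plan is to combine the previous proposition with an analytic-dependence argument to assemble the fiberwise data $\{(A_t,\varphi_t)\}$ into a genuine complex analytic family over $\Delta$.

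First, the hypothesis $H[\epsilon(t),\epsilon(t)]=0$ for all $t$ together with the previous proposition implies that $\epsilon(t)=(A_t,\varphi_t)$ satisfies the Maurer--Cartan equation \eqref{eqn:MC_pair} for every $t$ in a sufficiently small ball $\Delta$. By Theorem \ref{thm:MC_eqn}, each $(A_t,\varphi_t)$ defines a holomorphic pair $(X_t,E_t)$. Recall moreover from the construction via the Kuranishi equation $\epsilon(t)=\epsilon_1(t)-\tfrac{1}{2}\dbar_{A(E)}^*G[\epsilon(t),\epsilon(t)]$ that $\epsilon(t)$ depends holomorphically on $t$, so the family $t\mapsto(A_t,\varphi_t)$ is holomorphic in $t$ with values in $\Omega^{0,1}(A(E))$.

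The next step is to assemble these fibers into a single complex analytic total space $\mathcal{X}$ and holomorphic bundle $\mathcal{E}$. On $\mathcal{X}:=\Delta\times X$, define an almost complex structure $\mathcal{J}$ whose $(0,1)$-tangent bundle is locally spanned by $\partial/\partial\bar{t}^j$ together with $\partial/\partial\bar{z}^i+\varphi^k_i(z,t)\partial/\partial z^k$; integrability of $\mathcal{J}$ reduces to the fiberwise Maurer--Cartan equation \eqref{eqn:MC_manifold} for $\varphi_t$ (encoded in the second component of \eqref{eqn:MC_pair}) together with the vanishing of mixed brackets $[\partial/\partial\bar{t}^j,\partial/\partial\bar{z}^i+\varphi^k_i\partial/\partial z^k]$, which holds because $\varphi_t$ is holomorphic in $t$. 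Applying Newlander--Nirenberg gives a complex manifold $\mathcal{X}$ with a holomorphic projection $\pi:\mathcal{X}\to\Delta$ and $\pi^{-1}(t)=X_t$.

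For the bundle, on the smooth bundle $\mathcal{E}:=\Delta\times E\to\Delta\times X$ define the operator $\bar{D}_{\mathcal{E}}:=\dbar_\Delta+\bar{D}_t$, where $\bar{D}_t=\dbar_E+\varphi_t\lrcorner\nabla+A_t$ acts in the $X$-direction while $\dbar_\Delta$ acts in the $\Delta$-direction. The identity $\bar{D}_{\mathcal{E}}^2=0$ decomposes into three pieces: $\dbar_\Delta^2=0$; $\bar{D}_t^2=0$, which is the full Maurer--Cartan equation established above; and the cross term $\dbar_\Delta\bar{D}_t+\bar{D}_t\dbar_\Delta$, which vanishes because $(A_t,\varphi_t)$ is holomorphic in $t$. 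Since $\bar{D}_{\mathcal{E}}$ satisfies the Leibniz rule with respect to $\mathcal{J}$, the linearized Newlander--Nirenberg theorem (applied on $\mathcal{X}$) endows $\mathcal{E}$ with a holomorphic structure making it a holomorphic vector bundle over $\mathcal{X}$ that restricts to $E_t$ over each $X_t$, which is precisely the desired complex analytic family.

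The main technical point I expect to spend care on is the vanishing of the cross term $\dbar_\Delta\bar{D}_t+\bar{D}_t\dbar_\Delta$; this uses holomorphicity of $\epsilon(t)$ in $t$, which in turn rests on the holomorphic solvability of the Kuranishi equation established earlier via the implicit function theorem and elliptic regularity. Once that holomorphic dependence is in hand, the remaining verifications are formal consequences of the fiberwise results already proved in Theorems \ref{thm:Dbar_integrability} and \ref{thm:MC_eqn}.
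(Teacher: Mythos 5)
Your proof is correct and follows the same overall strategy as the paper: the previous proposition upgrades the hypothesis $H[\epsilon(t),\epsilon(t)]=0$ to the Maurer--Cartan equation for all $t$, and then the fiberwise holomorphic pairs are assembled into a family using the holomorphic dependence of $\epsilon(t)$ on $t$. Where you differ is in how the assembly step is executed. The paper is terser: it takes $\mathcal{X}$ for granted from classical Kodaira--Spencer theory and defines $\dbar_{\mathcal{E}}$ on $\Delta\times E$ fiberwise via local holomorphic frames $\{e_k(t,x)\}$ of $E_t$, asserting well-definedness and $\dbar_{\mathcal{E}}^2=0$ without further comment; this implicitly requires the frames (equivalently the transition functions) to vary holomorphically in $t$, which in turn rests on the holomorphicity of $\epsilon(t)$ in $t$. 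You instead work with the global operator $\bar{D}_{\mathcal{E}}=\dbar_\Delta+\bar{D}_t$ on the total space and verify $\bar{D}_{\mathcal{E}}^2=0$ directly, isolating the cross term as the place where holomorphic $t$-dependence enters; in effect you are applying Theorem \ref{thm:Dbar_integrability} to the pair $(\Delta\times X,\,\Delta\times E)$ with the family $\varphi$ viewed as an element of $\Omega^{0,1}(T_{\Delta\times X})$ having no $d\bar t$-components or $\partial/\partial t$-components. This buys a cleaner and non-circular treatment of the $t$-direction, at the cost of one small imprecision of phrasing: $\bar{D}_{\mathcal{E}}$ satisfies the Leibniz rule with respect to the auxiliary operator $\dbar_{\Delta\times X}+\varphi\lrcorner\partial$ rather than with respect to $\dbar_{\mathcal{J}}$ itself, which is exactly the hypothesis Theorem \ref{thm:Dbar_integrability} needs, so the conclusion stands; you should just cite that theorem (or the projection $P_t$ argument of Proposition \ref{prop:isomorphic_cohomology}) rather than the bare linearized Newlander--Nirenberg theorem at that point.
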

\begin{proof}
If $H[\epsilon(t),\epsilon(t)]=0$ for all $t$, then $\epsilon(t)=(A_t,\varphi_t)$ satisfies the Maurer--Cartan equation and so $(X_t,E_t)$ is holomorphic for each $t$. In particular, we obtain a deformation $\mathcal{X}$ of $X$. Let $\mathcal{E}:=\Delta\times E$. A smooth section $\sigma:\mathcal{X}\rightarrow\mathcal{E}$ of $\mathcal{E}$ on $\mathcal{X}$ can be written as
$$\sigma:(t,x)\longmapsto (t,s(x,t)),$$
for some smooth map $s:\mathcal{X}\rightarrow E$. We define a Dolbeault operator $\dbar_{\mathcal{E}}:\Omega^0_{\mathcal{X}}(\mathcal{E})\rightarrow\Omega^{0,1}_{\mathcal{X}}(\mathcal{E})$ on $\mathcal{E}$ by
$$\dbar_{\mathcal{E}}\sigma(t,x)=(t,\dbar_{E_t}s(t,x)).$$
Note that $\dbar_{\mathcal{E}}$ is well-defined for, if $\{e_k(t,x)\}$ are local holomorphic frame of $E_t\rightarrow X_t$, then we can write
$$\dbar_{\mathcal{E}}\sigma(t,x)=(t,\dbar_{\mathcal{E}}(s^k(t,x)e_k(t,x))=\dbar_ts^k(t,x)\otimes e_k(t,x)),$$
which is a smooth section of $\Omega^{0,1}_{\mathcal{X}}(\mathcal{E})$. Clearly, $\dbar_{\mathcal{E}}^2=0$ and hence $\mathcal{E}$ is a holomorphic vector bundle over $\mathcal{X}$.
\end{proof}

In general, the condition $H^2(X, A(E))=0$ may not be satisfied. But we can define the (singular) analytic space
$$\mathcal{S}:=\{t\in\Delta:H[\epsilon(t),\epsilon(t)]=0\}$$
and form a family $\mathcal{E}\rightarrow\mathcal{X}$ over $\mathcal{S}$, which is called the {\em Kuranishi family} of $(X,E)$.
In particular, we see that the obstruction space is precisely given by the Dolbeault cohomology group $H^{0,2}_{\dbar_{ A(E)}} \cong H^2(X, A(E))$, and the obstructions to deformations of a holomorphic pair $(X,E)$ is captured by the Kuranishi map
\begin{align*}
Ob_{(X,E)}:U\subset H^1(X, A(E))\to H^2(X, A(E)),\ \dpsum{i=1}{m}t_j\eta_j\mapsto H[\epsilon(t),\epsilon(t)],
\end{align*}
where $U$ is a small open subset around the origin $0\in H^1(X,A(E))$ whose diameter is less than twice of the radius of convergence of $\epsilon(t)$.

\section{A proof of completeness}\label{sec:completeness}

The goal of this section is to give a proof of the local completeness of a Kuranishi family for the deformation of the pair $(X,E)$. Existence of a locally complete (or versal) family for deformations of pairs was first proved by Siu-Trautmann \cite{Siu-Trautmann81}. Here we give another proof using Kuranishi's method.

\begin{definition}
A family $\mathcal{E}\rightarrow\mathcal{X}$ over a analytic space $\mathcal{S}$ is said to be {\em locally complete (or versal)} if for any family $\mathcal{E}'\rightarrow\mathcal{X}'$ over a sufficiently small ball $\Delta$, there exists a analytic map $f:\Delta\rightarrow\mathcal{S}$ such that the family $\mathcal{E}'\rightarrow\mathcal{X}'$ is the pull-back of $\mathcal{E}\rightarrow\mathcal{X}$ via $f$.
\end{definition}

Recall that for given $\epsilon_1(t)\in\mathbb{H}^1(X, A(E))$, we have existence of solutions $\epsilon(t)$ to
$$\epsilon(t)=\epsilon_1(t)-\frac{1}{2}\dbar_{ A(E)}^*G[\epsilon(t),\epsilon(t)]$$
and $\epsilon(t)$ satisfies the Maurer--Cartan equation if and only if $H[\epsilon(t),\epsilon(t)]=0$. We then obtain an analytic family $\mathcal{E}\rightarrow\mathcal{X}$ over
$$\mathcal{S}:=\{t\in\Delta:H[\epsilon(t),\epsilon(t)]=0\}.$$
The main theorem is as follows:
\begin{theorem}\label{thm:completeness}
The Kuranishi family $\mathcal{E}\rightarrow\mathcal{X}$ over $\mathcal{S}$ is locally complete.
\end{theorem}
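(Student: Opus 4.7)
The plan is to adapt Kuranishi's slicing argument, carried out inside the DGLA $(\Omega^{0,\bullet}(A(E)),\dbar_{A(E)},[-,-])$ of Theorem \ref{thm:MC_eqn}. Let an arbitrary family $\mathcal{E}'\to\mathcal{X}'$ over a small ball $\Delta$ be represented by a smooth family $(A'_t,\varphi'_t)\in\Omega^{0,1}(A(E))$ satisfying \eqref{eqn:MC_pair} with $(A'_0,\varphi'_0)=0$. The idea is, for each $t$ small, to apply a gauge transformation of the type described in Proposition \ref{prop:1st_order_deform} that carries $(A'_t,\varphi'_t)$ into the Kuranishi slice
\[
K:=\{\omega\in\Omega^{0,1}(A(E)):\dbar^*_{A(E)}\omega=0\},
\]
then to recognize the resulting element as $\epsilon(s(t))$ for a holomorphic map $s:\Delta\to\mathcal{S}$, and finally to declare $f:=s$.

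To construct the slicing gauge, I work in H\"older spaces $C^{k,\alpha}$ and apply an implicit function theorem to the nonlinear map
\[
\Psi\big((\Theta,v),(A',\varphi')\big):=\dbar^*_{A(E)}\!\Big((A',\varphi')+\dbar_{A(E)}(-\Theta,v)+R((A',\varphi'),(\Theta,v))\Big),
\]
where $R$ is the quadratic error from Proposition \ref{prop:1st_order_deform}. At the origin $\Psi$ vanishes, and its linearization in $(\Theta,v)$ is $-\dbar^*_{A(E)}\dbar_{A(E)}$, which agrees with the Laplacian $\Delta_{A(E)}$ on the orthogonal complement $\mathbb{H}^0(X,A(E))^\perp$ and, by the Douglis--Nirenberg and Green's operator estimates recalled in Section \ref{sec:obs_kuranishi}, restricts to an isomorphism $\mathbb{H}^0(X,A(E))^\perp\to\mathrm{Im}\,\dbar^*_{A(E)}$. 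A holomorphic-in-parameter implicit function theorem then produces, for every $t$ near $0$, a unique $(\Theta_t,v_t)\in\mathbb{H}^0(X,A(E))^\perp$ depending holomorphically on $t$ with $\Psi((\Theta_t,v_t),(A'_t,\varphi'_t))=0$.

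Let $(\widetilde A_t,\widetilde\varphi_t)$ be the resulting gauge-equivalent element. By construction it lies in $K$, and since the gauge action preserves the Maurer--Cartan locus it still solves \eqref{eqn:MC_pair}. Applying the Hodge decomposition $I=H+\dbar^*_{A(E)}\dbar_{A(E)}G+\dbar_{A(E)}\dbar^*_{A(E)}G$ and using both $\dbar^*_{A(E)}(\widetilde A_t,\widetilde\varphi_t)=0$ and $\dbar_{A(E)}(\widetilde A_t,\widetilde\varphi_t)=-\tfrac{1}{2}[(\widetilde A_t,\widetilde\varphi_t),(\widetilde A_t,\widetilde\varphi_t)]$, one checks that $(\widetilde A_t,\widetilde\varphi_t)$ satisfies the Kuranishi fixed-point equation of Section \ref{sec:obs_kuranishi} with harmonic part $s(t):=H(\widetilde A_t,\widetilde\varphi_t)$. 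Uniqueness of the small-norm solution then forces $(\widetilde A_t,\widetilde\varphi_t)=\epsilon(s(t))$; the Maurer--Cartan condition and the criterion proved in Section \ref{sec:obs_kuranishi} give $H[\epsilon(s(t)),\epsilon(s(t))]=0$, so $s(t)\in\mathcal{S}$. Holomorphic dependence of $s(t)$ on $t$ is inherited from that of $(\widetilde A_t,\widetilde\varphi_t)$, and pulling back the Kuranishi family by $f:=s$ yields a family isomorphic to $\mathcal{E}'\to\mathcal{X}'$.

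The main obstacle is the second step: arranging the slicing gauge $(\Theta_t,v_t)$ to depend \emph{holomorphically}---not merely smoothly---on $t$, while keeping the quadratic error $R$ under uniform H\"older-norm control. This requires a careful choice of Banach scales on which $\dbar^*_{A(E)}\dbar_{A(E)}$ is invertible modulo $\mathbb{H}^0(X,A(E))$ and on which the implicit function theorem applies in its Banach-analytic form; one must also verify that $(\Theta_t,v_t)$ exponentiates to an honest diffeomorphism of $X$ together with a bundle isomorphism of $E$ covering it, which is handled by shrinking $\Delta$ so that the relevant exponential maps stay in their domains of definition.
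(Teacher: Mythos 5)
Your proposal follows essentially the same route as the paper: first use a gauge transformation, constructed via the implicit function theorem applied to the quadratic-error formula of Proposition \ref{prop:1st_order_deform}, to move the given family into the slice $\ker\dbar^*_{A(E)}$, and then use the Hodge decomposition together with the uniqueness of small solutions of the Kuranishi fixed-point equation (Lemma \ref{lem:small_sol}) to identify the gauged element with $\epsilon(s(t))$ for some $s(t)\in\mathcal{S}$. The only differences are cosmetic: the paper precomposes your map $\Psi$ with the Green's operator so that the linearization is the identity rather than $\dbar^*_{A(E)}\dbar_{A(E)}$, and it works with Sobolev rather than H\"older norms in this section.
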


Before going into the details of the proof, we first introduce the Sobolev norm:
One can endow $ A(E)$ a hermitian metric $H$, induced from that of $E$ and $X$, and define the inner product
$$(\alpha,\beta)_k:=\dpsum{|I|\leq k}{}\int_XH(D^I\alpha,D^I\beta),$$
$\alpha,\beta\in\Omega^{0,*}( A(E))$. The Sobolev norm is defined by
$$|\alpha|:=(\alpha,\alpha)^{\frac{1}{2}}.$$
One has the estimate
$$|[\alpha,\beta]|_k\leq C_k|\alpha|_{k+1}|\beta|_{k+1},$$
for some constant $C_k>0$.

We take a completion of $\Omega^{0,\bullet}( A(E))$ with respective to $(\cdot,\cdot)_k$ to get a Hilbert space $L_k^*$. The harmonic projection $H:L_k^{\bullet}\rightarrow\mathbb{H}^{\bullet}(X, A(E))$ and the Green's operator $G:L_k^{\bullet}\rightarrow L_{k+2}^{\bullet}$ satisfy the estimates
$$|H\alpha|_k\leq C_k|\alpha|_k,$$
$$|\dbar_{ A(E)}^*G\alpha|_k\leq C_k|\alpha|_{k-1}.$$
The following lemma will be useful in the proof of the completeness theorem.
\begin{lemma}\label{lem:small_sol}
For fixed $\epsilon_1(t)\in\mathbb{H}^1(X, A(E))$, $t\in\mathcal{S}$, the equation
$$\epsilon(t)=\epsilon_1(t)-\frac{1}{2}\dbar_{ A(E)}^*G[\epsilon(t),\epsilon(t)]$$
has only one small solution.
\end{lemma}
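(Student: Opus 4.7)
The plan is a standard smallness / contraction argument applied to the fixed-point equation. Suppose $\epsilon(t)$ and $\epsilon'(t)$ are two solutions to
\begin{equation*}
\epsilon(t) = \epsilon_1(t) - \tfrac{1}{2}\dbar_{A(E)}^{*}G[\epsilon(t),\epsilon(t)],
\end{equation*}
sharing the same harmonic part $\epsilon_1(t)$. Subtracting the two equations and using the bilinearization
\begin{equation*}
[\epsilon,\epsilon] - [\epsilon',\epsilon'] = [\epsilon - \epsilon',\epsilon] + [\epsilon',\epsilon - \epsilon'],
\end{equation*}
I would obtain
\begin{equation*}
\epsilon - \epsilon' = -\tfrac{1}{2}\dbar_{A(E)}^{*}G\bigl([\epsilon-\epsilon',\epsilon] + [\epsilon',\epsilon-\epsilon']\bigr).
\end{equation*}

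Next I would feed this into the three Sobolev estimates already recorded right before the lemma: the bracket estimate $|[\alpha,\beta]|_{k-1} \leq C_{k-1}|\alpha|_{k}|\beta|_{k}$, together with the smoothing estimate $|\dbar_{A(E)}^{*}G\alpha|_{k} \leq C_k|\alpha|_{k-1}$ for the Green's operator. Chaining them gives a bound of the form
\begin{equation*}
|\epsilon - \epsilon'|_k \leq C_k'\bigl(|\epsilon|_k + |\epsilon'|_k\bigr)\,|\epsilon - \epsilon'|_k
\end{equation*}
for a fixed constant $C_k'$ independent of $\epsilon, \epsilon'$.

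The final step is to quantify "small": take the threshold in the statement of the lemma to be any bound strictly smaller than $1/(2C_k')$ in the $|\cdot|_k$-norm. Then for any two solutions within that ball the coefficient on the right is strictly less than $1$, which forces $|\epsilon - \epsilon'|_k = 0$ and hence $\epsilon = \epsilon'$. The only potential obstacle is bookkeeping the Sobolev indices so that the estimates compose correctly (one needs $k$ large enough for the Sobolev multiplication inequality on $[-,-]$ to be valid, which is exactly the regime in which the preceding existence argument was run), but this is routine and does not require any new input beyond the estimates already stated. No appeal to elliptic regularity or to the Maurer--Cartan equation is needed, since uniqueness is a property of the fixed-point equation itself.
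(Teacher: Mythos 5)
Your argument is correct and is essentially the paper's own proof: the paper sets $\delta=\epsilon-\epsilon(t)$, expands $[\epsilon,\epsilon]-[\epsilon(t),\epsilon(t)]=2[\delta,\epsilon(t)]+[\delta,\delta]$ (an equivalent bilinearization to yours), and chains the same bracket and Green's-operator estimates to get $|\delta|_k\leq C_k|\delta|_k(|\epsilon(t)|_k+|\delta|_k)$, forcing $\delta=0$ for small solutions. Your bookkeeping of the Sobolev indices and the explicit smallness threshold are fine; no gap.
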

\begin{proof}
Suppose $\epsilon$ is another solution. Let $\delta:=\epsilon-\epsilon(t)$. Then
\begin{align*}
\delta
& = -\frac{1}{2}\dbar_{ A(E)}^*G([\epsilon,\epsilon]-[\epsilon(t),\epsilon(t)])\\
& = -\frac{1}{2}\dbar_{ A(E)}^*G([\delta,\epsilon(t)]+[\epsilon(t),\delta]+[\delta,\delta])\\
& = -\frac{1}{2}\dbar_{ A(E)}^*G(2[\delta,\epsilon(t)]+[\delta,\delta]).
\end{align*}
Hence
$$|\delta|_k\leq C_k(|\delta|_k|\epsilon(t)|_k+|\delta|_k^2)\leq C_k|\delta|_k(|\epsilon(t)|_k+|\delta|_k).$$
For $|\epsilon(t)|_k$ and $|\epsilon|_k$ small, we can only have $|\delta|_k=0$.
\end{proof}

We are now ready to prove the local completeness theorem.
\begin{proof}[Proof of Theorem \ref{thm:completeness}]
Let $\mathcal{E}'\rightarrow\mathcal{X}'$ be a deformation of $(X,E)$. Let $\epsilon'$, be the element representing this deformation. We first prove that if $\dbar_{ A(E)}^*\epsilon'=0$, then there exists $t\in\mathcal{S}$ such that $\epsilon'=\epsilon(t)$.

Note that $\epsilon'$ satisfies the Maurer--Cartan equation:
$$\dbar_{ A(E)}\epsilon'+\frac{1}{2}[\epsilon',\epsilon']=0.$$
Applying $\dbar_{ A(E)}^*$, we get
$$\dbar_{ A(E)}^*\dbar_{ A(E)}\epsilon'+\frac{1}{2}\dbar_{ A(E)}^*[\epsilon',\epsilon']=0.$$
Since $\dbar_{ A(E)}^*\epsilon'=0$, we have
$$\Delta_{ A(E)}\epsilon'+\frac{1}{2}\dbar_{ A(E)}^*[\epsilon',\epsilon']=0.$$
Then using $I=H+G\Delta_{ A(E)}$, we get
$$\epsilon'=H\epsilon'-\frac{1}{2}\dbar_{ A(E)}^*G[\epsilon',\epsilon'].$$
Note that $H\epsilon'\in\mathbb{H}^1(X, A(E))$ and by the estimate $|H\epsilon'|_k\leq C_k|\epsilon'|$, we see that $|H\epsilon'|_k$ is small if $|\epsilon'|_k$ is small. Hence $H\epsilon'=\epsilon_1(t)$ for some $t\in\mathcal{S}$. Therefore, if the ball $\Delta$ is small enough, $\epsilon'$ is a solution to
$$\epsilon'=\epsilon_1(t)-\frac{1}{2}\dbar_{ A(E)}^*G[\epsilon',\epsilon'].$$
Therefore, $\epsilon'=\epsilon(t)$ for some $t\in\mathcal{S}$ by Lemma \ref{lem:small_sol}.

Now we prove that for any given small deformation $\mathcal{E}'\rightarrow\mathcal{X}'$, one can find an isomorphic deformation $\mathcal{E}''\rightarrow\mathcal{X}''$ such that the element $\epsilon''$ which represents the family $\mathcal{E}''\rightarrow\mathcal{X}''$ is $\dbar_{A(E)}^*$-closed. This will prove the local completeness.
Indeed, we will prove the following: Given a deformation $\epsilon'$, there exists $\eta\in Im(\dbar_{ A(E)}^*)\subset\Omega^0( A(E))$ such that the element $\epsilon''$, which represents the deformation $\mathcal{E}''\rightarrow\mathcal{X}''$, is $\dbar_{ A(E)}^*$-closed.

Let $\eta=(\Theta,v)\in\Omega^0( A(E))$, then the elements $\epsilon',\epsilon''$, which represent the deformations $\mathcal{E}'\rightarrow\mathcal{X}'$ and $\mathcal{E}''\rightarrow\mathcal{X}''$ respectively, satisfy
$$\epsilon''=\epsilon'+\dbar_{ A(E)}\eta+R(\epsilon',\eta),$$
where the error term $R$ satisfies $R(s\epsilon',s\eta)=s^2R_1(\epsilon',\eta,s)$ as in Proposition \ref{prop:1st_order_deform}. Hence $\dbar_{ A(E)}^*\epsilon''=0$ if and only if
$$\dbar_{ A(E)}^*\epsilon'+\dbar_{ A(E)}^*\dbar_{ A(E)}\eta+\dbar_{ A(E)}^*R(\epsilon',\eta)=0.$$
If $\eta\in Im(\dbar_{ A(E)}^*)$, then
$$\Delta_{ A(E)}(\eta)+\dbar_{ A(E)}^*R(\epsilon'(s),\eta)+\dbar_{ A(E)}^*\epsilon'=0.$$
Applying $G$, we get
$$\eta+\dbar_{ A(E)}^*GR(\epsilon',\eta)+\dbar_{ A(E)}^*G\epsilon'=0.$$
Let $U_1\subset L_k^1$ and $V_1\subset L_k^0$ be neighborhoods around $0$ such that $R(\epsilon',\eta)$ is defined. Define $F:U_1\times V_1\rightarrow L_k^0$ by
$$F(\epsilon',\eta):=\eta+\dbar_{ A(E)}^*GR(\epsilon',\eta)+\dbar_{ A(E)}^*G\epsilon'.$$
By the order condition on the error term $R$, the derivative of $F$ with respective to $\eta$ at $(0,0)$ is the identity map. Hence by the implicit function theorem, there is a $C^{\infty}$ function $g$ such that $F(\epsilon',\eta)=0$ if and only if $\eta=g(\epsilon')$. By the error condition again, the (second order) operator $|\dbar_{ A(E)}^*R(\epsilon',-)|_k$ is small if $|\epsilon'|_k$ is small. Hence
$$\Delta_{ A(E)}+\dbar_{ A(E)}^*R(\epsilon',-)+\dbar_{ A(E)}^*\epsilon'$$
is still a quasi-linear elliptic operator. By elliptic regularity, $\eta$ is smooth. This completes our proof.
\end{proof}

\section{Unobstructed deformations}\label{sec:unobstr}

In this section, we investigate various circumstances under which deformations of holomorphic pairs are unobstructed.


To begin with, note that we have an exact sequence of holomorphic vector bundles
$$0\longrightarrow \text{End}(E)\longrightarrow  A(E)\longrightarrow T_X\longrightarrow 0$$
by the construction of $ A(E)$ (which shows that $ A(E)$ is an extension of $Q = \text{End}(E)$ by $T_X$).
This induces a long exact sequence in cohomology groups:
\begin{align*}
\cdots & \longrightarrow H^1(X,Q)\longrightarrow H^1(X, A(E))\longrightarrow H^1(X,T_X)\longrightarrow\\
       & \longrightarrow H^2(X,Q)\longrightarrow H^2(X, A(E))\longrightarrow H^2(X,T_X)\longrightarrow\cdots,
\end{align*}
and the first order term $(A_1,\varphi_1)$ defines a class $[(A_1,\varphi_1)]\in H^1(X, A(E))$.

The following proposition, which first appeared in \cite[Appendix]{Huybrechts95} without proof, describes the relations between the deformations of a pair $(X,E)$ and that of $X$ and $E$.
\begin{proposition}\label{prop:obstr_commute}
Denote the Kuranishi obstruction maps of the deformation theories of $X$, $E$ and $(X,E)$ by $Ob_X$, $Ob_E$ and $Ob_{(X,E)}$ respectively. Then, wherever the obstruction maps are defined, we have the following commutative diagram:
\begin{equation*}
\xymatrix{
\cdots\ar@{->}[r]& H^1(X,Q) \ar[d]_{Ob_E} \ar@{->}[r]^{\iota^*}& {H^1(X, A(E))} \ar[d]_{Ob_{(X,E)}} \ar@{->}[r]^{\pi^*}& H^1(X,T_X) \ar[d]_{Ob_X} \ar@{->}[r]^{\delta} & \cdots
\\\cdots\ar@{->}[r] & {H^2(X,Q)} \ar@{->}[r]^{\iota^*} & {H^2(X, A(E))} \ar@{->}[r]^{\pi^*} & H^2(X,T_X) \ar@{->}[r]^{\delta} & \cdots
}
\end{equation*}
Here, the connecting homomorphism $\delta$ is given by contracting with the Atiyah class:
$$\delta(\varphi)=\varphi\lrcorner[F_{\nabla}].$$
\end{proposition}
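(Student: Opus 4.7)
The plan is to exploit the fact that the inclusion $Q \hookrightarrow A(E)$ and the projection $A(E) \twoheadrightarrow T_X$ underlying the Atiyah extension lift to morphisms of DGLAs at the Dolbeault level. Explicitly, set
\[
\iota:\Omega^{0,\bullet}(Q) \to \Omega^{0,\bullet}(A(E)),\ A\mapsto(A,0),\qquad \pi:\Omega^{0,\bullet}(A(E)) \to \Omega^{0,\bullet}(T_X),\ (A,\varphi)\mapsto\varphi.
\]
A short direct computation using the block form of $\dbar_{A(E)}$ and the bracket of Proposition \ref{prop:bracket} shows both are DGLA morphisms: for $\iota$ one has $\dbar_{A(E)}(A,0)=(\dbar_Q A,0)$ and $[(A,0),(B,0)]=([A,B],0)$, while for $\pi$ the second component of $\dbar_{A(E)}(A,\varphi)$ is $\dbar_{T_X}\varphi$ and the second component of $[(A,\varphi),(B,\psi)]$ is exactly $[\varphi,\psi]$.

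The two squares then follow from functoriality of the Kuranishi construction under DGLA morphisms. Concretely, if $A_t=A_1+O(t^2)\in\Omega^{0,1}(Q)$ is a formal Maurer--Cartan solution for the $E$-theory, then $\iota(A_t)=(A_t,0)$ is a formal Maurer--Cartan solution for the $(X,E)$-theory extending $\iota^*[A_1]$, and
\[
[(A_t,0),(A_t,0)] = ([A_t,A_t],0) = \iota[A_t,A_t]
\]
gives $Ob_{(X,E)}\circ\iota^* = \iota^*\circ Ob_E$ at the level of cohomology classes. Analogously, for any formal Maurer--Cartan solution $(A_t,\varphi_t)$ for $(X,E)$, its projection $\pi(A_t,\varphi_t)=\varphi_t$ solves the Maurer--Cartan equation for $X$ (it is exactly the second component of the pair equation), and $\pi[(A_t,\varphi_t),(A_t,\varphi_t)]=[\varphi_t,\varphi_t]$ yields $\pi^*\circ Ob_{(X,E)} = Ob_X\circ\pi^*$.

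For the connecting homomorphism I would carry out the standard snake-lemma computation in Dolbeault resolutions. Given a $\dbar_{T_X}$-closed $\varphi\in\Omega^{0,1}(T_X)$, lift smoothly to $(0,\varphi)\in\Omega^{0,1}(A(E))$ and apply $\dbar_{A(E)}$: the block formula gives $\dbar_{A(E)}(0,\varphi)=(B\varphi,\dbar_{T_X}\varphi)=(B\varphi,0)$, and the defining identity $B\varphi=-(-1)^{|\varphi|}\varphi\lrcorner F_{\nabla}$ with $|\varphi|=1$ yields $B\varphi=\varphi\lrcorner F_{\nabla}$. Therefore $\delta[\varphi]=[\varphi\lrcorner F_{\nabla}]=\varphi\lrcorner[F_{\nabla}]$, which is the claimed contraction with the Atiyah class.

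The main obstacle lies in the middle step: the Kuranishi map of Section \ref{sec:obs_kuranishi} is defined via a specific Kuranishi-gauged solution $\epsilon(t)$, and the Green's operator on $A(E)$ does not split through $\iota$ or $\pi$ because of the off-diagonal $B$-term in $\dbar_{A(E)}$. In particular $\iota(A_t)$ and the Kuranishi solution for $(X,E)$ starting from $\iota^*[A_1]$ need not coincide on the nose. The remedy is to observe that the obstruction class in $H^2$ is intrinsic to the DGLA: for any formal Maurer--Cartan extension $\alpha_t$ of a first-order class $[\alpha_1]$, the class of $\tfrac{1}{2}[\alpha_t,\alpha_t]$ in $H^2$ equals the Kuranishi value $[H[\epsilon(t),\epsilon(t)]]$ up to a $\dbar_{A(E)}$-exact correction, so the noncanonical choice drops out in cohomology and the DGLA-functorial argument above delivers the commutativity.
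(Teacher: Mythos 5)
Your argument is correct and follows the same underlying route as the paper's, but the paper's own proof is a two-line assertion (``the diagram follows directly from the definitions''), so you have in effect supplied the verifications the paper omits. Your checks that $\iota$ and $\pi$ are morphisms of DGLAs and your snake-lemma computation of $\delta$ (which the paper states without proof) are both correct, including the sign bookkeeping giving $B\wedge\varphi=\varphi\lrcorner F_{\nabla}$ for $|\varphi|=1$. More importantly, you are right to flag the point the paper passes over in silence: the Kuranishi maps of Section \ref{sec:obs_kuranishi} are built from the harmonic projection and Green's operator of $A(E)$, and since $\dbar_{A(E)}$ is block upper-triangular its adjoint $\dbar_{A(E)}^*$ is block lower-triangular, so neither $\iota$ nor $\pi$ intertwines the gauge-fixed solutions $\epsilon(t)$ (nor even the harmonic representatives in degree one). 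The one wobble is your closing paragraph: as phrased, ``for any formal Maurer--Cartan extension $\alpha_t$ \dots the class of $\tfrac{1}{2}[\alpha_t,\alpha_t]$'' is vacuous, because an honest MC solution gives $\tfrac{1}{2}[\alpha_t,\alpha_t]=-\dbar_{A(E)}\alpha_t$, which is exact. The statement you actually need is the order-by-order one: the primary obstruction $\bigl[[\epsilon_1,\epsilon_1]\bigr]\in H^2$ depends only on the cohomology class of $\epsilon_1$ and is manifestly natural under DGLA morphisms since they are chain maps compatible with brackets, while each higher obstruction is defined only after the lower ones vanish and then only modulo the usual indeterminacy; with that standard reading of ``wherever the obstruction maps are defined'' (which is how the proposition is applied in Proposition \ref{prop:unobstr_basic}), your functoriality argument does close the gap.
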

\begin{proof}
By definition,
$$\iota^*([A])=[(A,0)],\quad \pi^*([(A,\varphi)])=[\varphi].$$
The commutative diagram follows directly from the definitions of the maps $Ob_X$,$Ob_E$ and $Ob_{(X,E)}$.
\end{proof}

\begin{remark}
We remark that since $tr[A,A]=0$ for any $A\in\Omega^{0,1}(Q)$, the obstruction of deforming $E$ (with $X$ fixed) actually lies in $H^2(X,\text{End}_0(E))$, where $\text{End}_0(E)\subset\text{End}(E)$ is the trace-free part of $\text{End}(E)$.
\end{remark}

\begin{remark}
For any $[(A,\varphi)]\in H^1(X, A(E))$ such that $Ob_{(X,E)}(A,\varphi) = 0$, we have
$$0=Ob_X\circ\pi^*[(A,\varphi)]=Ob_X([\varphi]).$$
In this case, the map $(A, \varphi) \mapsto \varphi$ induces a map of {\em Kuranishi slices}, i.e. every deformation of the pair $(X,E)$ induces a deformation of the manifold $X$.
\end{remark}

An immediate consequence of this proposition is the following slight generalization of a result in \cite{Pan13}:
\begin{proposition}\label{prop:unobstr_basic}
Suppose that $Ob_X\circ\pi^*=0$ and the connecting homomorphism $\delta:H^1(X,T_X)\rightarrow H^2(X,Q)$ is surjective, then deformations of the pair $(X,E)$ are unobstructed.
\end{proposition}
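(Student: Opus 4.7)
The plan is to carry out a straightforward diagram chase in the commutative diagram supplied by Proposition \ref{prop:obstr_commute}, using the long exactness of the rows. Since unobstructedness of a holomorphic pair means $Ob_{(X,E)}[(A,\varphi)] = 0$ for every class $[(A,\varphi)] \in H^1(X,A(E))$ in a neighborhood of the origin, it suffices to prove this identity as a consequence of the two hypotheses.

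First I would fix an arbitrary class $[(A,\varphi)] \in H^1(X, A(E))$ (inside the neighborhood on which the Kuranishi maps are defined) and compute
\begin{align*}
\pi^* \bigl( Ob_{(X,E)}[(A,\varphi)] \bigr) = Ob_X \bigl( \pi^*[(A,\varphi)] \bigr) = Ob_X[\varphi] = 0,
\end{align*}
where the first equality uses the commutativity of the right square in Proposition \ref{prop:obstr_commute} and the last equality is precisely the assumption $Ob_X \circ \pi^* = 0$. Thus $Ob_{(X,E)}[(A,\varphi)]$ lies in $\ker(\pi^*: H^2(X,A(E)) \to H^2(X,T_X))$, and by exactness of the bottom row this kernel equals the image of $\iota^*: H^2(X,Q) \to H^2(X,A(E))$. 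Hence I can choose some class $[C] \in H^2(X,Q)$ with $\iota^*[C] = Ob_{(X,E)}[(A,\varphi)]$.

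Next I would invoke the second hypothesis: since $\delta: H^1(X,T_X) \to H^2(X,Q)$ is surjective, there exists $[\psi] \in H^1(X,T_X)$ with $\delta[\psi] = [C]$. Exactness of the long exact sequence at $H^2(X,Q)$ gives $\iota^* \circ \delta = 0$, so
\begin{align*}
Ob_{(X,E)}[(A,\varphi)] = \iota^*[C] = \iota^* \circ \delta[\psi] = 0.
\end{align*}
As $[(A,\varphi)]$ was arbitrary, the Kuranishi obstruction map vanishes identically near the origin of $H^1(X,A(E))$, which is the definition of unobstructedness of the deformation problem for the pair $(X,E)$.

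Since the argument is a pure diagram chase once the commutative diagram of Proposition \ref{prop:obstr_commute} is in hand, there is really no hard step: the entire content was already packaged into that proposition and into the long exact sequence coming from the Atiyah extension $0 \to Q \to A(E) \to T_X \to 0$. The only thing I would double-check for cleanliness is that the Kuranishi neighborhoods in $H^1(X,A(E))$, $H^1(X,Q)$ and $H^1(X,T_X)$ on which the three obstruction maps are simultaneously defined are compatible with the linear maps $\iota^*$ and $\pi^*$, but this is exactly the content of the remark immediately following Proposition \ref{prop:obstr_commute}.
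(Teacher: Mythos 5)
Your proof is correct and is essentially the same diagram chase as the paper's: the paper phrases it by noting that surjectivity of $\delta$ forces $\iota^*=0$ on $H^2(X,Q)$, hence $\pi^*$ is injective on $H^2(X,A(E))$, and then concludes from $\pi^*\circ Ob_{(X,E)}=Ob_X\circ\pi^*=0$, which uses exactly the same exactness and commutativity facts as your element-wise argument.
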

\begin{proof}
Surjectivity of $\delta$ implies that the map $\iota^*:H^2(X,Q)\rightarrow H^2(X, A(E))$ is a zero map, and hence the map $\pi^*:H^2(X, A(E))\rightarrow H^2(X,T_X)$ is injective. But $\pi^*\circ Ob_{(X,E)}=Ob_X\circ\pi^*=0$, so we have $Ob_{(X,E)}=0$.
\end{proof}

In the case when $E=L$ is a line bundle, we recover the following
\begin{corollary}[\cite{Pan13}, Lemma 2.4]
Let $X$ be a compact complex manifold with unobstructed deformations and $L$ be a holomorphic line bundle over $X$ such that the map
$$\cup c_1(L):H^1(X,T_X)\rightarrow H^2(X,\mathcal{O}_X)$$
is surjective. Then deformations of the pair $(X,L)$ are unobstructed.
\end{corollary}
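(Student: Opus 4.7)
The plan is to deduce this corollary as a special case of Proposition \ref{prop:unobstr_basic}, so the task reduces to verifying both hypotheses of that proposition when $E = L$ is a holomorphic line bundle.

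First I would observe the two simplifications afforded by the line bundle hypothesis. Since $L$ has rank one, $Q = \text{End}(L) \cong \mathcal{O}_X$ canonically, and so the connecting homomorphism in Proposition \ref{prop:obstr_commute} becomes a map $\delta : H^1(X,T_X) \to H^2(X,\mathcal{O}_X)$. Next, the assumption that $X$ has unobstructed deformations means $Ob_X \equiv 0$, and hence $Ob_X \circ \pi^* = 0$ automatically. Thus the first hypothesis of Proposition \ref{prop:unobstr_basic} is immediate, and it only remains to identify $\delta$ with the cup product $\cup c_1(L)$ so that surjectivity of the latter gives surjectivity of the former.

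The heart of the argument is the identification of the Atiyah class $[F_{\nabla}] \in H^{1,1}(X, \mathcal{O}_X)$ of $L$ with (a nonzero multiple of) $c_1(L)$ under Dolbeault cohomology. Concretely, when $\nabla$ is the Chern connection of a hermitian metric $h$ on $L$, the curvature $F_{\nabla}$ is an ordinary $(1,1)$-form whose class in $H^{1,1}(X, \mathbb{C})$ represents $c_1(L)$ (up to the standard $\tfrac{1}{2\pi i}$ normalization), and this is precisely the Dolbeault representative of $c_1(L) \in H^1(X, \Omega^1_X)$ that pairs with $H^1(X,T_X)$ to give $H^2(X,\mathcal{O}_X)$. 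Under this identification, the contraction operation $\varphi \lrcorner [F_{\nabla}]$ appearing in Proposition \ref{prop:obstr_commute} is exactly the Dolbeault realization of the cup product $\varphi \cup c_1(L)$. Hence surjectivity of $\cup c_1(L) : H^1(X, T_X) \to H^2(X, \mathcal{O}_X)$ is equivalent to surjectivity of $\delta$.

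With both hypotheses of Proposition \ref{prop:unobstr_basic} verified, that proposition directly yields $Ob_{(X,L)} \equiv 0$, which is the desired unobstructedness of the pair $(X,L)$. The only nontrivial step is the identification of the Atiyah class with $c_1(L)$ for line bundles, which is a standard fact (see, e.g., \cite{Huybrechts_book}) but is what makes the hypothesis stated in terms of $c_1(L)$ match the hypothesis $\delta$ surjective in the general proposition; no further computation is needed.
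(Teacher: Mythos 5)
Your proposal is correct and follows exactly the route the paper intends: the corollary is stated as an immediate specialization of Proposition \ref{prop:unobstr_basic} to $E=L$, using $\text{End}(L)\cong\mathcal{O}_X$ and the identification of $\delta(\varphi)=\varphi\lrcorner[F_\nabla]$ with (a nonzero multiple of) $\varphi\cup c_1(L)$, and the paper itself supplies no further argument. Your verification of the two hypotheses matches what the paper leaves implicit.
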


For example, if $X$ is an $n$-dimensional compact K\"ahler manifold with trivial canonical line bundle, then $X$ admits unobstructed deformations. If we further assume that $H^{0,2}(X) = 0$ (e.g. when the holonomy of $X$ is precisely $SU(n)$), then deformations of $(X,L)$ for any line bundle $L$ are unobstructed.

\begin{definition}
A holomorphic vector bundle $E$ over a compact complex manifold $X$ is said to be {\em good} if $H^2(X,Q_0)=0$, where $Q_0$ is the trace-free part of $Q = \text{End}(E)$.
\end{definition}

\begin{proposition}\label{prop:unobstr_K3}
Let $X$ be a compact complex surface with trivial canonical line bundle (e.g. a K3-surface), and let $E$ be a good bundle over $X$ with $c_1(E)\neq 0$. Then deformations of the pair $(X,E)$ are unobstructed.
\end{proposition}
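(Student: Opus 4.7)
The plan is to invoke Proposition~\ref{prop:unobstr_basic}, whose two hypotheses reduce the problem to verifying (i) that $Ob_X\circ\pi^*=0$, and (ii) that the connecting homomorphism $\delta:H^1(X,T_X)\to H^2(X,Q)$ is surjective. Once both are confirmed, the proposition directly yields $Ob_{(X,E)}=0$, i.e.\ unobstructedness of $(X,E)$.

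For (i), since the canonical line bundle $K_X$ is trivial, deformations of $X$ are unobstructed by the Bogomolov--Tian--Todorov theorem (for a compact K\"ahler surface with trivial canonical bundle, such as a K3 surface, this is classical). In particular $Ob_X\equiv 0$, so a fortiori $Ob_X\circ\pi^*=0$.

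For (ii), I first use the goodness of $E$ to reduce $H^2(X,Q)$ to $H^2(X,\mathcal{O}_X)$. The trace induces a holomorphic splitting $Q=Q_0\oplus\mathcal{O}_X\cdot\mathrm{id}_E$ via $A\mapsto (A-r^{-1}\mathrm{tr}(A)\cdot\mathrm{id}_E,\,r^{-1}\mathrm{tr}(A)\cdot\mathrm{id}_E)$, where $r=\mathrm{rank}\,E$, and since $H^2(X,Q_0)=0$ by goodness, we obtain $H^2(X,Q)\cong H^2(X,\mathcal{O}_X)$. Post-composing $\delta(\varphi)=\varphi\lrcorner[F_\nabla]$ with the trace projection yields $r^{-1}\varphi\lrcorner\mathrm{tr}[F_\nabla]$; since $\mathrm{tr}[F_\nabla]$ represents (up to a nonzero constant) the first Chern class $c_1(E)\in H^1(X,\Omega^1_X)$, surjectivity of $\delta$ reduces to surjectivity of the cup product map $\cup\,c_1(E):H^1(X,T_X)\to H^2(X,\mathcal{O}_X)$.

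For this last step, the cup product pairing
\begin{equation*}
H^1(X,T_X)\otimes H^1(X,\Omega^1_X)\longrightarrow H^2(X,\mathcal{O}_X)
\end{equation*}
induced by the contraction $T_X\otimes\Omega^1_X\to\mathcal{O}_X$ is, after identifying $H^2(X,\mathcal{O}_X)\cong H^2(X,K_X)$ via the trivialization of $K_X$, precisely the Serre duality pairing on the surface, and hence perfect. The class $c_1(E)$ lies in $H^{1,1}(X)=H^1(X,\Omega^1_X)$ (Atiyah classes are automatically of type $(1,1)$) and is nonzero by hypothesis, so pairing with $c_1(E)$ is not the zero map; as $H^2(X,\mathcal{O}_X)\cong\mathbb{C}$ (since $h^{0,2}=h^{0,0}=1$ by triviality of $K_X$), surjectivity follows. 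The main delicate point is the bookkeeping needed to identify $\delta$ with cup product by $c_1(E)$ through the trace splitting, and to match the resulting pairing with the Serre duality pairing; once this is carried out, the conclusion is immediate.
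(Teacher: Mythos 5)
Your proposal is correct and follows essentially the same route as the paper: invoke Proposition~\ref{prop:unobstr_basic}, use Tian--Todorov for $Ob_X=0$, use goodness to reduce $H^2(X,Q)$ to the one-dimensional trace part $H^2(X,\mathcal{O}_X)$, and identify $\delta$ with cup product against $c_1(E)\neq 0$, which is surjective by nondegeneracy of the pairing. The only cosmetic difference is that you use the holomorphic splitting $Q=Q_0\oplus\mathcal{O}_X$ directly, while the paper reaches $H^2(X,Q)\cong\mathbb{C}$ by applying Serre duality twice.
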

\begin{proof}
By the theorem of Tian and Todorov \cite{Tian87, Todorov89}, we have $Ob_X=0$. Hence the condition $Ob_X\circ\pi^*=0$ is automatic.

On the other hand, note that $Q^* = (E^* \otimes E)^* = E^* \otimes E = Q$ and similarly $Q_0^* = Q_0$.
By Serre duality and the fact that $K_X\cong\mathcal{O}_X$, we have $H^0(X, Q_0) \cong (H^2(X, Q_0))^* = 0$ since $E$ is good.
This implies that $H^0(X, Q) \cong H^0(X, \mathcal{O}_X) \cong \bC$.
Then applying Serre duality again gives
$$H^2(X, Q) \cong (H^0(X, Q^*\otimes K_X))^* \cong (H^0(X, Q))^* \cong \bC.$$
In this case, the connecting homomorphism $\delta:H^1(X,T_X)\cong H^{1,1}(X)\rightarrow H^2(X,Q)\cong\mathbb{C}$ is simply given by
$$\delta(\varphi)=\int_X\varphi\cup [trF_{\nabla}]=-2\pi i\int_X\varphi\cup c_1(E).$$
When $c_1(E)\neq 0$, $\delta$ is a nonzero map and hence surjective. Proposition \ref{prop:unobstr_basic} then says that any deformation of $(X,E)$ is unobstructed.
\end{proof}

\appendix

\section{Comparison with the algebraic approach}\label{sec:compare_classical}

The aim of this appendix is to give an explicit comparison between the analytic approach we adopt here and the classical algebraic approach (see the book \cite{Sernesi_book} for the deformation theory of $(X,L)$ where $L$ is a holomorphic line bundle on $X$, and the thesis \cite{Martinengo_thesis} for the general case).

We start with a definition
\begin{definition}
A {\em differential operator of order 1} on a vector bundle $E$ is a linear map $P:\Omega^0(E)\rightarrow\Omega^0(E)$ such that locally,
$$P=(g_{ij})+\dpsum{k}{}h_{ij}^{k}\pd{}{x^{k}},$$
with $(g_{ij})$ be a matrix with entries in $\mathcal{O}_X(U_{\alpha})$ and $h_{ij}^k\in\mathcal{O}_X(U_{\alpha})$.

A differential operator of order 1 is said to be {\em with scalar principle symbol} if $h_{ij}^{k}=h^{k}\cdot I$.
\end{definition}
In the algebraic approach, the role of the Atiyah extension $ A(E)$ is replaced by the sheaf $D^1(E)$ of scalar differential operators of order less than $1$ on $E$, namely, we have an exact sequence
$$0\longrightarrow \text{End}(E)\longrightarrow D^1(E)\longrightarrow T_X\longrightarrow 0,$$
where the surjective map $\sigma:D^1(E)\rightarrow T_X$ is locally defined by the symbol
$$\sigma(P)=\dpsum{k}{}h^{k}\pd{}{x^k}.$$
There is an obvious identification of $D^1(E)$ with $ A(E)$ as smooth vector bundles, but we will see that in fact $D^1(E)$ can be given a holomorphic structure so that $D^1(E)$ and $ A(E)$ are isomorphic as holomorphic vector bundles.

First of all, locally on an open set $U_{\alpha}$, we can write
$$P|_{U_{\alpha}}=g_{\alpha}+d_{\alpha}.$$
Let $e_{\alpha}$ be local sections of $E$, $\{f_{\alpha\beta}\}$ be holomorphic transition functions of $E$ and $P_{\alpha}:=P|_{U_{\alpha}}(e_{\alpha})$. To get a global differential operator, we must have
$$f_{\alpha\beta}P_{\beta}=P_{\alpha}f_{\alpha\beta}.$$
Hence
$$g_{\beta}=f_{\beta\alpha}g_{\alpha}f_{\alpha\beta}+f_{\beta\alpha}(d_{\alpha}f_{\beta\alpha}),\quad d_{\alpha}=d_{\beta}.$$
Set
$$\tau_{\alpha}:=g_{\alpha}-d_{\alpha}\lrcorner (\bar{h}^{-1}_{\alpha}\partial\bar{h}_{\alpha}),$$
where $h_{\alpha}$ is the Hermitian metric on $E|_{U_{\alpha}}$. Define a map
$$\Phi:g_{\alpha}+d_{\alpha}\longmapsto (\tau_{\alpha},d_{\alpha}).$$
Straightforward computations give the identities
$$f_{\alpha\beta}\tau_{\beta}=\tau_{\alpha}f_{\alpha\beta},\quad \dbar_{ A(E)}(\tau_{\alpha},d_{\alpha})=0.$$
It then follows that $\Phi$ defines an isomorphism between $D^1(E)$ and $ A(E)$. So we can give $D^1(E)$ a holomorphic structure by pulling back that on $ A(E)$ via $\Phi$. Hence we obtain

\begin{proposition}
$D^1(E)$ carries a natural holomorphic structure so that it is isomorphic to the Atiyah extension $ A(E)$. In particular,
$H^p(X,D^1(E))\cong H^p(X, A(E))$ for any $p$.
\end{proposition}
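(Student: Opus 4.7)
The proof reduces to verifying the two identities $f_{\alpha\beta}\tau_\beta = \tau_\alpha f_{\alpha\beta}$ and $\bar{\partial}_{A(E)}(\tau_\alpha, d_\alpha) = 0$ quoted in the excerpt. The first says that the locally defined map $\Phi: g_\alpha + d_\alpha \mapsto (\tau_\alpha, d_\alpha)$ patches into a globally defined smooth bundle morphism $D^1(E) \to A(E)$; it is pointwise invertible since $g_\alpha$ can be recovered from $(\tau_\alpha, d_\alpha)$ by $g_\alpha = \tau_\alpha + d_\alpha\lrcorner(\bar h_\alpha^{-1}\partial\bar h_\alpha)$, so $\Phi$ is in fact a smooth bundle isomorphism. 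The second identity says that $\Phi$ sends a locally holomorphic section of $D^1(E)$ (represented in a local holomorphic frame by a holomorphic matrix $g_\alpha$ and a holomorphic vector-field symbol $d_\alpha$) to a $\bar{\partial}_{A(E)}$-closed section. Pulling back the holomorphic structure on $A(E)$ via $\Phi$ therefore agrees, on holomorphic sections, with the natural algebraic holomorphic structure on $D^1(E)$; this gives the isomorphism of holomorphic bundles, and the cohomology statement $H^p(X,D^1(E)) \cong H^p(X,A(E))$ is then an immediate consequence of the Dolbeault theorem.

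For the first identity, I would combine two transition laws: the transition formula for $g_\alpha$ under change of local holomorphic frame, which is already recorded in the excerpt, and the standard transition formula for the Chern connection matrix $\bar h_\alpha^{-1}\partial\bar h_\alpha$. The latter differs from the tensorial conjugation $f_{\beta\alpha}(\bar h_\alpha^{-1}\partial\bar h_\alpha)f_{\alpha\beta}$ by a $\partial f$-term which, after contraction with the global symbol $d_\alpha = d_\beta$, exactly cancels the analogous term in the transition law for $g_\alpha$; the cancellation uses only the cocycle identity $f_{\alpha\beta}f_{\beta\alpha} = I$ and its first derivative. For the second identity, holomorphicity of the local representative $P = g_\alpha + d_\alpha$ gives $\bar\partial g_\alpha = 0$ and $\bar{\partial}_{T_X}d_\alpha = 0$, so the only surviving term in $\bar{\partial}_{A(E)}(\tau_\alpha,d_\alpha)$ is
$$\bar\partial_Q\tau_\alpha = -d_\alpha\lrcorner\bar\partial\bigl(\bar h_\alpha^{-1}\partial\bar h_\alpha\bigr) = -d_\alpha\lrcorner F_\nabla,$$
where the second equality is the local formula $F_\nabla = \bar\partial(\bar h^{-1}\partial\bar h)$ for the Chern curvature. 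By the definition of $B$ in Section~\ref{sec:prelim}, this is precisely $B\wedge d_\alpha$, so the claimed vanishing holds.

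The main technical obstacle is the change-of-frame bookkeeping in the first identity, where care is required with signs, with the conjugate in the Chern-connection formula, and with the cocycle relations for $f_{\alpha\beta}$. Once this is sorted out, the second identity is essentially a one-line application of the Chern curvature formula, and the resulting holomorphic isomorphism $\Phi$ — together with the metric-independence already established for the Atiyah extension — gives both the naturality of the holomorphic structure on $D^1(E)$ and the cohomology isomorphism.
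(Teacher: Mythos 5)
Your plan follows the paper's own route exactly: the paper defines $\Phi: g_\alpha + d_\alpha \mapsto (\tau_\alpha, d_\alpha)$ with $\tau_\alpha = g_\alpha - d_\alpha\lrcorner(\bar h_\alpha^{-1}\partial\bar h_\alpha)$, asserts the two identities $f_{\alpha\beta}\tau_\beta = \tau_\alpha f_{\alpha\beta}$ and $\dbar_{A(E)}(\tau_\alpha,d_\alpha)=0$ as ``straightforward computations,'' and pulls back the holomorphic structure; you are filling in precisely those computations, and your treatment of the first identity and of the cohomology consequence is fine.

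However, your verification of the second identity has an internal sign inconsistency that you should repair before writing it up. First, $\dbar_Q\tau_\alpha$ is \emph{not} the only surviving term: by the definition of $\dbar_{A(E)}$ in Section \ref{sec:prelim}, the $\mathrm{End}(E)$-component of $\dbar_{A(E)}(\tau_\alpha,d_\alpha)$ is $\dbar_Q\tau_\alpha + B\wedge d_\alpha$, and $B\wedge d_\alpha = -d_\alpha\lrcorner F_\nabla$ does not vanish; what you must show is that the two terms cancel, i.e.\ $\dbar_Q\tau_\alpha = +\,d_\alpha\lrcorner F_\nabla = -B\wedge d_\alpha$. Second, your computation gives $\dbar_Q\tau_\alpha = -d_\alpha\lrcorner F_\nabla$ because you passed $\dbar$ through the contraction without the Leibniz sign; by the paper's Lemma \ref{lem:dbar_contract} with $p=0$ and $\dbar_{T_X}d_\alpha=0$, one has $\dbar_Q\bigl(d_\alpha\lrcorner(\bar h_\alpha^{-1}\partial\bar h_\alpha)\bigr) = -\,d_\alpha\lrcorner\dbar\bigl(\bar h_\alpha^{-1}\partial\bar h_\alpha\bigr) = -\,d_\alpha\lrcorner F_\nabla$, hence $\dbar_Q\tau_\alpha = +\,d_\alpha\lrcorner F_\nabla$. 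The two slips happen to cancel, so your conclusion is correct, but as written the chain ``$\dbar_Q\tau_\alpha = B\wedge d_\alpha$, so the expression vanishes'' would actually yield $2B\wedge d_\alpha$. With those signs straightened out, the argument is complete and agrees with the paper.
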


Together with the Lie bracket \cite{Martinengo_thesis}
$$[\omega\otimes P,\eta\otimes Q]:=\omega\wedge\eta\otimes[P,Q]+\omega\wedge\lie{\sigma(P)}{\eta}\otimes Q-(-1)^{|\omega||\eta|}\eta\wedge\lie{\sigma(Q)}{\omega}\otimes P,$$
the triple $(\Omega^{0,*}(D^1(E)),\dbar,[-,-])$ forms a DGLA. Note that the Lie derivative acts by
$$\lie{X}{\omega}=d(i_X\omega)+i_Xd\omega=i_X\partial\omega,$$
for any $\omega\in\Omega^{0,*}(X)$ and $X\in\Omega^0(T_X)$.

\begin{theorem}
The isomorphism $\Phi:D^1(E)\rightarrow A(E)$
$$\Phi:g_{\alpha}+d_{\alpha}\longmapsto g_{\alpha}-\bar{h}_{\alpha}^{-1}d_{\alpha}\bar{h}_{\alpha}$$
intertwines with the brackets $[-,-]$ and $[-,-]_h$, i.e.
$$\Phi[\varphi\otimes P,\psi\otimes Q]=[\varphi\otimes\Phi(P),\psi\otimes\Phi(Q)]_h.$$
\end{theorem}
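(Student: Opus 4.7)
The plan is to verify the identity $\Phi[\varphi\otimes P,\psi\otimes Q]=[\varphi\otimes\Phi(P),\psi\otimes\Phi(Q)]_h$ by a purely local computation, since both brackets are $\bC$-bilinear (with the appropriate $(-1)^{pq}$ graded-signs) and $\Phi$ is a bundle map. Fix an open set $U_\alpha$ carrying holomorphic coordinates $\{z^i\}$ and a local holomorphic frame $\{e_k\}$ of $E$, and write the Chern connection $1$-form as $\theta=\bar{h}^{-1}\partial\bar{h}\in\Omega^{1,0}(Q)$, so that in this frame $\nabla^Q A=dA+[\theta,A]$. Decompose $P=g+d$ and $Q=g'+d'$ with $g,g'\in\Omega^0(Q)$ of order zero and $d,d'$ first-order operators with scalar symbols $v:=\sigma(d),\ w:=\sigma(d')\in\Omega^0(T_X)$. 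Then
\begin{equation*}
\Phi(P)=(g-v\lrcorner\theta,\,v),\qquad \Phi(Q)=(g'-w\lrcorner\theta,\,w).
\end{equation*}

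Next I would expand each side. On the $T_X$-component the analytic bracket yields $[\varphi\wedge v,\psi\wedge w]$, which agrees with the symbol $\sigma\bigl([\varphi\otimes P,\psi\otimes Q]\bigr)$ after using $\sigma([P,Q])=[\sigma(P),\sigma(Q)]$ and the identity $\mathcal{L}_v\eta=v\lrcorner\partial\eta$ on $\Omega^{0,*}$, giving immediately the $T_X$-projection of the intertwining identity. For the $Q$-component, both sides naturally split into four pieces according to the bidegrees of their inputs in the $(g,d)\oplus(g',d')$ decomposition: (i) the $g$-$g'$ cross terms produce $[g,g']$ on both sides without modification; (ii) the $d$-$d'$ cross terms produce, on the algebraic side, the principal part of $[d,d']$ which is a first-order operator and contributes to the symbol, plus a subprincipal $Q$-term coming from the connection corrections in $P\circ Q$; (iii) the $g$-$d'$ and (iv) the $d$-$g'$ cross terms produce the Lie-derivative corrections $\omega\wedge\mathcal{L}_{\sigma(P)}\eta\otimes Q$ and $-(-1)^{|\omega||\eta|}\eta\wedge\mathcal{L}_{\sigma(Q)}\omega\otimes P$ together with $d(g')$ and $d'(g)$ applied entrywise.

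The crucial step — and the one I expect to be the main obstacle — is matching piece (iii), (iv) and the subprincipal part of (ii). This reduces to the algebraic identity
\begin{equation*}
\Phi\bigl([P,Q]\bigr)=\bigl(v\lrcorner\nabla^Q(g'-w\lrcorner\theta)-w\lrcorner\nabla^Q(g-v\lrcorner\theta)+[g-v\lrcorner\theta,\,g'-w\lrcorner\theta],\,[v,w]\bigr),
\end{equation*}
which after using $\nabla^Q A=dA+[\theta,A]$ and distributing yields the required terms $v(g')-w(g)$ (from $[P,Q]$) plus commutator corrections $[v\lrcorner\theta,g']$, $[g,w\lrcorner\theta]$, and a cancellation coming from the curvature-type expression $v\lrcorner\nabla^Q(w\lrcorner\theta)-w\lrcorner\nabla^Q(v\lrcorner\theta)=[v,w]\lrcorner\theta+\text{(type $(1,1)$-part of }F_\nabla\text{)}$. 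The $(1,1)$-curvature contribution is exactly what correctly modifies $\Phi([P,Q])$ to account for the noncommutativity of covariant differentiation on $Q$, and the vanishing of $F_\nabla^{0,2}$ (which underlies Proposition~\ref{prop:B_closed}) guarantees no spurious terms appear.

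Finally, the graded signs $(-1)^{|\omega||\eta|}$ on the algebraic side match the $(-1)^{pq}$ on the analytic side automatically because $\Phi$ preserves form-degree and the decomposition $P=g+d$ respects the scalar–form factorization $\varphi\otimes P$. With all four pieces matched, the intertwining identity follows. As a consequence, Propositions \ref{prop:bracket} and \ref{prop:bracket_operator_compatible} transfer directly from the algebraic DGLA of \cite{Sernesi_book,Martinengo_thesis}, so that the isomorphism class of our DGLA is independent of the hermitian metric used to define $\nabla$, completing the comparison promised in the remark following Theorem \ref{thm:MC_eqn}.
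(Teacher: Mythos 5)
Your overall strategy --- reduce to a local computation in a holomorphic frame, split $P=g+d$, $Q=g'+d'$, and match the $T_X$- and $Q$-components piece by piece, then pass to $\omega\otimes P$ by the Leibniz rule --- is exactly the route the paper takes. The problem is that the identity you single out as the crucial step is not correct as stated. Writing $\theta:=\bar h^{-1}\partial\bar h$ and $\theta_v:=v\lrcorner\theta$, a direct expansion using $\nabla^Q A=dA+[\theta,A]$ gives
$$v\lrcorner\nabla^Q(w\lrcorner\theta)-w\lrcorner\nabla^Q(v\lrcorner\theta)=v(\theta_w)-w(\theta_v)+2[\theta_v,\theta_w]=[v,w]\lrcorner\theta+[\theta_v,\theta_w]+F_\nabla(v,w),$$
so even after the curvature contribution is discarded there remains a commutator $[v\lrcorner\theta,\,w\lrcorner\theta]$ that your displayed identity omits. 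That term is not spurious: it is precisely what must cancel the $[v\lrcorner\theta,\,w\lrcorner\theta]$ produced by the cross term in $[g-v\lrcorner\theta,\,g'-w\lrcorner\theta]$ inside the analytic bracket, and without tracking it the two sides of the intertwining identity do not close up. This cancellation is the content of the paper's key computation
$$[\bar h^{-1}d\bar h,\bar h^{-1}d'\bar h]=-d(\bar h^{-1}d'\bar h)+d'(\bar h^{-1}d\bar h)+\bar h^{-1}[d,d']\bar h,$$
which is the corrected form of your step.

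Two smaller misattributions in the same place: the curvature component that vanishes when you contract with the two $(1,0)$-fields $v,w$ is the $(2,0)$ part of $F_\nabla$ (for the Chern connection the full $F_\nabla$ is of type $(1,1)$, so this is automatic), not the $(0,2)$ part; and Proposition \ref{prop:B_closed} rests on the Bianchi identity $\bar\partial_Q F_\nabla=0$, not on the type of $F_\nabla$, so it is not the relevant fact to invoke here. Once the identity is corrected, your four-piece matching of the $Q$-component goes through and coincides with the paper's proof; the remaining step --- extracting the Lie-derivative terms $\omega\wedge\lie{\sigma(P)}{\eta}\otimes\tau(Q)$ in the general $\omega\otimes P$, $\eta\otimes Q$ case via the Leibniz rule for $\nabla^Q$ --- is fine as you sketch it, though the sign bookkeeping there deserves to be written out rather than declared automatic.
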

\begin{proof}
We first prove that
$$\Phi[P,Q]=[\Phi(P),\Phi(Q)]_h.$$
Write $P=g+d$ and $Q=g'+d'$ locally in a coordinate neighborhood $U\subset X$. Then
$$[P,Q]=[g,g']+dg'-d'g+[d,d']$$
and so
$$\Phi[P,Q]=([g,g']+dg'-d'g-\bar{h}^{-1}[d,d']\bar{h},[d,d']).$$
On the other hand,
$$[\Phi(P),\Phi(Q)]_h
=(\nabla^Q_d(g'-\bar{h}^{-1}d'\bar{h})-\nabla^Q_{d'}(g-\bar{h}^{-1}d\bar{h})+[g-\bar{h}^{-1}d'\bar{h},g-\bar{h}^{-1}d'\bar{h}],[d,d'])$$
Now, we compute
\begin{align*}
  & \nabla^Q_d(g'-\bar{h}^{-1}d'\bar{h})-\nabla^Q_{d'}(g-\bar{h}^{-1}d\bar{h})\\
= & d(g'-\bar{h}^{-1}d'\bar{h})+[\bar{h}^{-1}d\bar{h},g'-\bar{h}^{-1}d'\bar{h}]-d'(g+\bar{h}^{-1}d\bar{h})-[\bar{h}^{-1}d'\bar{h},g-\bar{h}^{-1}d\bar{h}]\\
= & dg'-d'g+[\bar{h}^{-1}d\bar{h},g']-[\bar{h}^{-1}d'\bar{h},g]-d\bar{h}^{-1}d'\bar{h}+d'\bar{h}^{-1}d\bar{h}-2[\bar{h}^{-1}d\bar{h},\bar{h}^{-1}d'\bar{h}].
\end{align*}
and
$$[g-\bar{h}^{-1}d\bar{h},g'-\bar{h}^{-1}d'\bar{h}]
=[g,g']-[g,\bar{h}^{-1}d'\bar{h}]-[\bar{h}^{-1}d\bar{h},g']+[\bar{h}^{-1}d\bar{h},\bar{h}^{-1}d'\bar{h}].$$
Therefore, their sum equals to
$$[g,g']+dg'-d'g-d\bar{h}^{-1}d'\bar{h}+d'\bar{h}^{-1}d\bar{h}-[\bar{h}^{-1}d\bar{h},\bar{h}^{-1}d'\bar{h}].$$
Finally,
\begin{align*}
[\bar{h}^{-1}d\bar{h},\bar{h}^{-1}d'\bar{h}]
& = \bar{h}^{-1}(d\bar{h})\bar{h}^{-1}(d'\bar{h})-\bar{h}^{-1}(d'\bar{h})\bar{h}^{-1}(d\bar{h})\\
& = -(d\bar{h}^{-1})(d'\bar{h})+(d'\bar{h}^{-1})(d\bar{h})\\
& = -d(\bar{h}^{-1}d'\bar{h})+\bar{h}^{-1}dd'\bar{h}+d'(\bar{h}^{-1}d\bar{h})-\bar{h}^{-1}d'd\bar{h}\\
& = -d(\bar{h}^{-1}d'\bar{h})+d'(\bar{h}^{-1}d\bar{h})+\bar{h}^{-1}[d,d']\bar{h}.
\end{align*}
Hence
\begin{align*}
  & \nabla^Q_d(g'-\bar{h}^{-1}d'\bar{h})-\nabla^Q_{d'}(g-\bar{h}^{-1}d\bar{h})+[g-\bar{h}^{-1}d'\bar{h},g-\bar{h}^{-1}d'\bar{h}]\\
= & [g,g']+dg'-d'g-\bar{h}^{-1}[d,d']\bar{h},
\end{align*}
which is the required equality.

To prove the general case, we have, by linearity and the case that we have proved, the $\text{End}(E)$-part of $\Phi[\omega\otimes P,\eta\otimes Q]$ is equal to
\begin{align*}
  & \omega\wedge\eta\otimes[\tau(P),\tau(Q)]_h-\omega\wedge\eta\otimes[\tau(P),\sigma(Q)]_h+\omega\wedge\eta\otimes[\sigma(P),\tau(Q)]_h\\
+ & \omega\wedge\lie{\sigma(P)}{\eta}\otimes\tau(Q)-(-1)^{|\omega||\eta|}\eta\wedge\lie{\sigma(Q)}{\omega}\otimes\tau(P),
\end{align*}
where $\tau(P):=pr_{\text{End}(E)}\circ\Phi(P)$. On the other hand, the $\text{End}(E)$-part of $[\Phi(\omega\otimes P),\Phi(\eta\otimes Q)]_h$ is equal to
$$((\omega\otimes\sigma(P))\lrcorner\nabla^Q(\eta\otimes\tau(Q))-(-1)^{|\omega||\eta|}(\eta\otimes\sigma(Q))\lrcorner\nabla^Q(\omega\otimes\tau(P))
+\omega\wedge\eta\otimes[\tau(P),\tau(Q)]_h.$$
The Leibniz rule for connections implies that
\begin{align*}
((\omega\otimes\sigma(P))\lrcorner\nabla^Q(\eta\otimes\tau(Q))
& = \omega\wedge\lie{\sigma(P)}{\eta}\otimes\tau(Q) + \omega\wedge\eta\otimes\nabla^Q_{\sigma(P)}\tau(Q)\\
& = \omega\wedge\lie{\sigma(P)}{\eta}\otimes\tau(Q) + \omega\wedge\eta\otimes[\sigma(P),\tau(Q)].
\end{align*}
Similarly, we have
$$((\eta\otimes\sigma(Q))\lrcorner\nabla^Q(\omega\otimes\tau(P))
=\eta\wedge\lie{\sigma(Q)}{\omega}\otimes\tau(P)+(-1)^{|\omega||\tau|}\omega\wedge\eta\otimes[\sigma(Q),\tau(P)].$$
Putting these back into $[\Phi(\omega\otimes P),\Phi(\eta\otimes Q)]_h$, we get
$$\Phi[\omega\otimes P,\eta\otimes Q]=[\Phi(\omega\otimes P),\Phi(\eta\otimes Q)]_h.$$
This proves our theorem.
\end{proof}

\begin{remark}
This theorem gives a proof of the required identities in Propositions \ref{prop:bracket} and \ref{prop:bracket_operator_compatible}, and the fact that the DGLA $(\Omega^{0,\bullet}( A(E)),\dbar_{ A(E)},[-,-]_h)$ is independent of the choice of the hermitian metric $h$.
\end{remark}

\bibliographystyle{amsplain}
\bibliography{geometry}

\end{document}